 \journalname{}
\newtheorem{Def}{Definition}[section]
\newtheorem{Lem}[Def]{Lemma}
\newtheorem{Thm}[Def]{Theorem}
\newtheorem{Cor}[Def]{Corollary}
\theoremstyle{definition}
\newtheorem{Rem}[Def]{Remark}
\newcommand{\e}{\mathbb{E}}
\newcommand{\real}{\mathbb{R}}
\newcommand{\n}{\mathbb{N}}
\newcommand{\1}{{\bf 1}}
\newcommand{\F}{\mathcal{F}}
\newcommand{\ff}{\mathbb{F}}
\begin{document}
		
\title{On a positivity preserving numerical scheme for jump-extended CIR process: the alpha-stable case.}

\titlerunning{On a positivity preserving scheme for the alpha-CIR process}

\author{Libo Li \and
	    {Dai Taguchi}}

\authorrunning{L. Li and D. Taguchi} 

\institute{Libo Li (\Letter) \at
           School of Mathematics and Statistics, University of New South Wales, NSW, Australia.\\
           \email{libo.li@unsw.edu.au}           
			\and
			Dai Taguchi \at
		Graduate School of Engineering Science, Osaka University, 1-3, Machikaneyama-cho, Toyonaka, Osaka, Japan, \\
		\email{dai.taguchi.dai@gmail.com}
}

\date{Received: date / Accepted: date}

\date{}
\maketitle
\begin{abstract}
We propose a positivity preserving implicit Euler-Maruyama scheme for a jump-extended Cox-Ingersoll-Ross (CIR) process where the jumps are governed by a compensated spectrally positive $\alpha$-stable process for $\alpha \in (1,2)$. Different to the existing positivity preserving numerical schemes for jump-extended CIR or CEV (Constant Elasticity Variance) process, the model considered here has infinite activity jumps. We calculate, in this specific model, the strong rate of convergence and give some numerical illustrations. Jump extended models of this type were initially studied in the context of branching processes and was recently introduced to the financial mathematics literature to model sovereign interest rates, power and energy markets. 
\\\\
\textbf{2010 Mathematics Subject Classification}:
60H35; 41A25; 60H10; 65C30
\\\\
\textbf{Keywords}:
Implicit scheme $\cdot$
Euler-Maruyama scheme $\cdot$
alpha-CIR models $\cdot$
L\'evy driven SDEs $\cdot$
H\"older continuous coefficients $\cdot$
Spectrally positive L\'evy process
\end{abstract}


\section*{Introduction}
In this article, we study the strong approximation of the {\it alpha-CIR process}. This class of models was first studied in the context of continuous state branching processes with interaction or/and immigration, see Li and Mytnik \cite{LiMy}, Fu and Li \cite{FL} and the references within, and was recently introduced by Jiao et al. \cite{JMS}\cite{JMSS} to the mathematical finance literature to model sovereign interest rates, power and energy markets. The alpha-CIR process is an extension of the classic diffusion Cox-Ingersoll-Ross process to include jumps which are governed by a compensated spectrally positive $\alpha$-stable L\'evy process for $\alpha \in (1,2)$. More specifically, given a positive initial point $x_0$, the alpha-CIR process satisfies the following stochastic differential equation (SDE),
\begin{gather}\label{CIR_0}
dX_t = \left(a - kX_t\right)dt + \sigma_1 |X_t|^\frac{1}{2}dW_t + \sigma_2 |X_{t-}|^\frac{1}{\alpha}dZ_t,
\end{gather}
where $a$, $\sigma_1, \sigma_2$ are non-negative and $k \in \real$. The {\it diffusion} and the {\it jump coefficient} are given by $g(x) = |x|^\frac{1}{2}$ and $h(x)= |x|^\frac{1}{\alpha}$. The process $W$ is a Brownian motion and $Z$ is a compensated spectrally positive $\alpha$-stable process, independent of $W$, of the form
\begin{gather*}
Z_t = \int^t_0\int_0^\infty z \widetilde N(dz,ds),
\end{gather*}
where $\widetilde N$ is a compensated Poisson random measure with its L\'evy measure denoted by $\nu$. In other words, the process $Z$ is a L\'evy process with the characteristic triple $(0,\nu,\gamma_0)$, where $\gamma_0 := -\int^\infty_1 x \nu(dx)$. In general, under some monotonicity conditions on the jump coefficient, the above SDE will have a unique non-negative strong solution for any integrable compensated spectrally one-sided L\'evy process $Z$, see \cite{FL}\cite{LiMy}. Here we mainly focus our attention on the $\alpha$-stable case. In the special case where $\sigma_1 = 0$, the solution to \eqref{CIR_0} is termed the stable-CIR, see Li and Ma \cite{LiMa}.

The CIR and the CEV processes are theoretically non-negative and are widely used in the modelling of interest rates, default rates and volatility, e.g. Duffie et al. \cite{DFS}\cite{DPS}. Therefore, in practice, for consistency reasons the ability to simulate a positive sample path is very important. The study of positivity preserving strong approximation schemes for CIR/CEV type processes has received a great deal of attention in the literature. For the classic diffusion case we mention the works of Alfonsi \cite{Alf}\cite{Alf2}, Berkaoui et al. \cite{BBD}, Brigo and Alfonsi \cite{BA}, Dereich et al. \cite{DNS}, Neuenkirch and Szpruch \cite{NeSz} and the references within. The jump-extended case (with and without delays) has recently received increasing attention, we refer to Yang and Wang \cite{YW}, and the recent working papers of Fatemion Aghdas \cite{FHT} and Stamatiou \cite{ST}. To the best of our knowledge, for jump-extended CIR/CEV models, the existing results have all focused on the case of finite activity jumps (the jumps are governed by a Poisson process) and results on positivity preserving strong approximation schemes in the case of infinite activity jumps have yet to be obtained.

The common approach in devising a positivity preserving simulation scheme for jump-extended CIR or diffusion CIR models is to first transform the solution $X$ to remove the diffusion coefficient and then combine an existing positivity preserving scheme for the It\^o diffusions, such as the backward Euler-Maruyama scheme, with the jumps of the Poisson process to create a jump-adapted scheme. The first order convergence rate obtained in for example \cite{YW} is very attractive, however schemes of this type can suffer from high computational costs when the intensity rate is high. 

Unfortunately, the existing techniques for Poisson jumps do note translate well into the case of infinite activity. This is because, in the infinite activity case, one can not simulate individually the small jumps and transform methods will usually lead to extra jump terms, which, due to presence of the small jumps are impossible to simulate. Depending on the application, one possible alternative is to consider weak approximation schemes by using a gaussian approximation of the small jumps as done in Asmussen and Rosinski \cite{AR} or Kohatsu-Higa and Tankov \cite{KHT}.

In the case of the alpha-CIR process, one is able to obtain some results in this direction. We propose an implicit approximation scheme in \eqref{scheme}, which extends the scheme proposed in Alfonsi \cite{Alf} and Brigo and Alfonsi \cite{BA} for the classic diffusion CIR process. This method depends strongly  on the fact that the diffusion coefficient $g(x)$ is a square root, which allows one to device a positivity preserving implicit scheme by solving a quadratic equation. 

Although the derivation of the current scheme follows closely the idea presented in \cite{Alf}\cite{BA}, the inclusion of an infinite activity jump process makes the scheme behave very differently to the classic implicit scheme for the diffusion CIR model, and the proof of convergence is technically more difficult.

In the diffusion case, the discriminate of the previously mentioned quadratic equation is non-negative if the condition $a-\sigma_1^2/2 > 0$ is satisfied.
In the proposed scheme, the support of the discriminate is bounded below only in the case where $Z$ has finite activity jumps (Type A) or is of finite variation and has infinite activity jumps (Type B).
In the case where $Z$ is of infinite variation (Type C) and therefore has infinite activity jumps, there is no hope of finding a set of conditions on the parameters $a,k,\sigma_1$, $\sigma_2, \alpha$ and the grid size such that the discriminate is non-negative.
This issue is resolved by taking the absolute value of the constant term in the quadratic equation, which ensures the non-negativity of the discriminate and thus the existence of an unique positive root (Descartes' Sign Rule). 

We point out that in the diffusion case, by using more advanced techniques, c.f. Hefter and Herzwurm \cite{HH}, it is possible to relax the condition $a-\sigma_1^2/2>0$. However, it is not clear if these techniques can be translated to the jump-extended setting. Here the condition $a-\sigma_1^2/2 > 0$ is essential in controlling the probability that the discriminate is negative in Lemma \ref{dnegative}. The case $\alpha = 2$ is not studied in this work, this is because for $\alpha = 2$ the alpha-CIR can be reduced to the diffusion CIR (see Jiao et al. \cite{JMS}) and one can refer to previously mentioned works on the diffusion case.


Unfortunately, depending on the integrability of $Z$, in order to obtain strong convergence of the proposed scheme, our methodology requires one to first modify the jump coefficient and the rate of convergence is obtained in two steps. That is we first truncate and consider the bounded jump coefficient given by $h(x)	=\min\{|x|^\frac{1}{\alpha}, H\}$, for some arbitrarily large constant $H>1$, and then compute the strong rate of convergence in Theorem \ref{main_1} for the approximation scheme $X^{H,n}$ of the {\it truncated alpha-CIR process} $X^H$, i.e. the solution to \eqref{CIR_0} with jump coefficient $\min\{ |x|^\frac{1}{\alpha}, H\}$. Then we compute in Theorem \ref{main_2} the strong rate of convergence of the truncated alpha-CIR process $X^H$ towards the alpha-CIR process $X$ as $H\uparrow \infty$.  Finally, by carefully selecting $H$ as a function of the grid size, we obtain for $\sqrt{2}<\alpha <2$ the overall rate of convergence in Corollary \ref{cor2.9}.

%

To this end, it is worth pointing out that the jump coefficient is truncated for purely technical reasons. In fact, to the best of our knowledge, there are no results on Euler-Maruyama schemes for (symmetric) $\alpha$-stable processes with unbounded jump coefficient, e.g. Hashimoto \cite{Ha} and Hashimoto and Tsuchiya \cite{HaTsu}.
On the other hand, we note that truncation of the jump coefficient $h$ is not needed in the case where $Z$ is square integrable. For example, when $Z$ is compensated Poisson process or a compensated spectrally positive tempered $\alpha$-stable process for $\alpha\in (1,2)$.

Finally, we mentioned that the current problem can potentially be treated using the symmetrized Euler-Maruyame scheme studied Diop \cite{DA}, Berkaoui et al. \cite{BBD} and Bossy and Diop \cite{BD}.
However, local time techniques used in \cite{BBD} \cite{BD} \cite{DA} do not translate well to our setting. This is due to the lack of a suitable version of the It\^o-Tanaka formula for $\alpha$-stable process or in general, L\'evy processes of infinite variation.


\section*{Notations and Assumptions}
We work on a usual filtered probability space $(\Omega,\F,\mathbb{P})$ with a filtration $\ff = (\F_t)_{t\geq 0}$, and we assume that all the processes considered are adapted to this filtration. For any $0\leq s<t$, the integral with upper limit $t$ and lower limit $s$ is understood as a integral over the interval $(s,t]$. Given the terminal time $T$, we consider an equally spaced grid $\pi^n:= \{(t_0,\ldots,t_n) : 0=t_0<t_1<\cdots< t_{n}=T\}$ and set $\eta(t) := t_{i}$ for $t\in (t_i,t_{i+1}]$. In addition, for $i = 0,\dots, n-1$, we require the condition that $1+k\Delta t_i  > 0$ where $\Delta t_i = t_{i+1}- t_i$.  Given a process $X$, the negative part of $X$ is denoted by $X^- := \max\{0,-X\}$ and for $i = 0,\dots, n-1$, we set $\Delta X_{t_i} := X_{t_{i+1}} - X_{t_i}$. The L\'evy measure of $Z$ will be denoted by $\nu$ and the drift of $Z$ is denoted by $\gamma_0 =
-\int^\infty_1 x \nu(dx)$. For general results on L\'evy processes we refer to Sato \cite{SK} and Applebaum \cite{AD}. In estimation, we often use $C$, $C'$, $C''$, $C_0$ or $c$ to denote constants, which may change from line to line. Subscripts will be used to indicate dependence of the constant on other parameters.

\section{An implicit scheme for the truncated alpha-CIR process}
Given a positive initial point $x_0$, we let $h(x)	=\min\{|x|^\frac{1}{\alpha}, H\}$, for some arbitrarily large constant $H > 1$, we consider the solution to the stochastic differential equation
\begin{align}\label{SDE_0}
dX_t^{H} = \left(a - kX_t^{H}\right)dt + \sigma_1 |X_t^{H}|^\frac{1}{2}dW_t + \sigma_2 h(X_{t-}^{H})dZ_t,
\end{align}
where $a$, $\sigma_1, \sigma_2$ are non-negative parameters with $a-\sigma_1^2/2>0$, $k \in \real$, $\alpha\in (1,2)$. The process $W$ is a Brownian motion and $Z$ is a compensated spectrally positive $\alpha$-stable L\'evy process with L\'evy measure $\nu$, independent of $W$. In order to derive a positivity preserving scheme for the truncated process $X^{H}$, we take our inspiration from Alfonsi \cite{Alf} or more generally Milstein et al. \cite{MRT} by considering the implicit scheme, $\widetilde X^{H,n}_{t_0} = x_0$, for $i = 0,\dots, n-1$
\begin{align}
\Delta \widetilde{X}^{H,n}_{t_{i}} &= ( a - \sigma_1^2/2 - k\widetilde{X}^{H,n}_{t_{i+1}} )\Delta t_i + \sigma_1 |\widetilde{X}^{H,n}_{t_{i+1}}|^\frac{1}{2}\Delta W_{t_i} + \sigma_2 h(\widetilde{X}^{H,n}_{t_{i}}) \Delta Z_{t_i},\label{scheme0}
\end{align}
where the extra term $(\sigma^2_1/2)\Delta t_i$ stems from the quadratic variation of $\sqrt{X^{H}}$ and $W$. 
\begin{remark}
In equation \eqref{scheme0}, the discretisation scheme is made implicit in the drift and the diffusion coefficient, but not in the jump coefficient $h$. It is important that the scheme is not implicit in the jump coefficient. Otherwise there  will be another extra adjustment term stemming from the quadratic variation of $\sqrt{X^H}$ and $Z$, which can not be simulated unless the time and size of each individual jump can be simulated. 
\end{remark}
For every $i = 0,\dots, n-1$, by setting $x^2 = \widetilde{X}^{H,n}_{t_{i+1}}$ in \eqref{scheme0}, one can obtain the following quadratic equation in $x$,
$$
(1+k\Delta t_i)x^2 - \sigma_1 \Delta W_{t_i} x - (\widetilde{X}^{H,n}_{t_i} + (a - \sigma_1^2/2)\Delta t_i + \sigma_2 h(\widetilde{X}^{H,n}_{t_{i}}) \Delta Z_{t_i})=0,
$$
which has a unique non-negative solution if the discriminate is non-negative, or under a slightly strong condition, the process 
$$
\widetilde{X}^{H,n}_{t_i} + \left(a - \sigma_1^2/2\right)\Delta t_i + \sigma_2 h(\widetilde{X}^{H,n}_{t_{i}}) \Delta Z_{t_i},
\quad i =0,1,\dots, n-1
$$ is non-negative. However the above can be negative as the drift $\gamma_0$ of $Z$ is negative.

In the case where $Z$ is a compensated spectrally positive $\alpha$-stable process, or in general a L\'evy process of infinite variation (L\'evy process of Type C), the support of the process $Z$ is not bounded below (see Theorem 24.10 (iii) in Sato \cite{SK}). Therefore it is not possible to select parameters so that the discriminate is non-negative. To overcome this, we take the absolute value and consider the following scheme, $X^{H,n}_{t_0}:= x_0$ and for each $i =0,\dots, n-1$,
\begin{align}
X^{H,n}_{t_{i+1}}
&:= \left[\frac{\sigma_1 \Delta W_{t_i} + \sqrt{(\sigma_1 \Delta W_{t_i})^2 + 4\left(1+k\Delta t_i\right)|D_{t_{i+1}}|}}{2\left(1+k\Delta t_i\right)}\,\right]^2,\label{scheme}\\
D_{t_{i+1}}
&:=X^{H,n}_{t_i} + \left(a - \sigma_1^2/2\right)\Delta t_i + \sigma_2 h(X^{H,n}_{t_{i}}) \Delta Z_{t_i}.\nonumber
\end{align} 
The goal in the rest of this article is to derive the strong rate of convergence under the assumption that $a- \sigma^2_1/2 > 0$ and $\alpha \in (1,2)$. We must point out that $Z$ can be replaced by any compensated spectrally positive integrable L\'evy process and one can show that the scheme converges given good estimates on $\mathbb{P}[D_{t_{i+1}} < 0]$ and $\mathbb{E}[|\Delta Z_{t_i}|]$.

\begin{remark}\label{Rem_0}
The difficulty of the current work lies in that a compensated spectrally positive $\alpha$-stable process is a L\'evy process of infinite variation (Type C). In the case where $Z$ has finite activity (Type A) or has infinite activity and is of finite variation (Type B), (see Definition 11.9 in \cite{SK}), it is possible to find a set of conditions on the parameters to ensure that the process $D$ is non-negative. To see this, suppose that the support of the L\'evy measure $\nu$ contains $0$, (see page 148 of \cite{SK} for the definition and properties of the support of a measure). From Theorem 24.10 (iii) in Sato \cite{SK}, we know that the support of $\Delta Z_{t_i}$ is almost surely contained in $[\gamma_0 \Delta t_i,\infty)$, where the drift $\gamma_0$ is given by $\gamma_0 = -\int^\infty_1 x\nu(dx)$.
Therefore, we obtain 
\begin{align*}
X^{H,n}_{t_i} + \left(a - \sigma_1^2/2\right)\Delta t_i + \sigma_2 h(X^{H,n}_{t_i}) \Delta Z_{t_i} 
&\geq X^{H,n}_{t_i} + \left(a - \sigma_1^2/2 
	+
\sigma_2 |X^{H,n}_{t_i}|^\frac{1}{\alpha}\gamma_0\right)\Delta t_i.
\end{align*}

By considering the convex function $z \mapsto |{X}^{H,n}_{t_i}|^z$ over the domain $[0,1]$ and applying Jensen's inequality, we see that $|{X}^{H,n}_{t_i}|^{z} \leq \left(1-z\right) + z{X}^{H,n}_{t_i}$ for all $z$ in the interval $[0,1]$. Since $1/\alpha$ lies in this interval, this inequality holds for $z=1/\alpha$. Applying this inequality to the above expression gives, 
\begin{align*}
{X}^{H,n}_{t_i} +  	\left(a - \sigma_1^2/2 
	+
\sigma_2 |X^{H,n}_{t_i}|^\frac{1}{\alpha}\gamma_0\right)\Delta t_i 
&\geq  \left(1
	+
\sigma_2 \gamma_0\Delta t_i/\alpha\right){X}^{H,n}_{t_i}\\
& \quad  + \left(a - \sigma_1^2/2 
	+
\sigma_2\gamma_0\left(1-1/\alpha\right) \right)\Delta t_i,
\end{align*}
which is positive if $1	+ \sigma_2 \gamma_0\Delta t_i/\alpha$ and $a-\sigma_1^2/2 + \sigma_2\gamma_0(1-1/\alpha)$ are both positive.  Hence, we arrive at a set of sufficient conditions on the parameters, given by 
\begin{align}
	1+\sigma_2 \gamma_0\Delta t_i/\alpha>0
	\quad \mathrm{and} \quad
	a - \sigma_1^2/2 + \sigma_2 \gamma_0\left(1 - 1/\alpha\right)>0.\label{poissoncond}
\end{align}
If $\sigma_2$ is equal to zero, then the second condition becomes $a > \sigma_1^2/2$, which is the same as the conditions imposed on the diffusion CIR process to be strictly positive.

Therefore it is worth mentioning that if $Z$ is the compensated Poisson process, then one does not need to truncate the jump coefficient or modify the discriminate once the condition given in \eqref{poissoncond} is satisfied.
\end{remark}

\section{Strong convergence}
To show that the proposed scheme converges, we expand the implicit scheme given in \eqref{scheme} around the Euler-Maruyama scheme and then apply the Yamada-Watanabe approximation technique. More explicitly, by expanding the quadratic in \eqref{scheme}, using the identity $|D| = D + 2D^-$ and adding/subtracting the appropriate terms we obtain
\begin{align}
\Delta {X}_{t_{i}}^{H,n}
		&= (a-k_n{X}_{t_i}^{H,n})\Delta t_i+ \sigma_1 |{X}_{t_i}^{H,n}|^\frac{1}{2}\Delta W_{t_i} + \sigma_2 h(X^{H,n}_{t_{i}}) \Delta Z_{t_i} + \Delta R^n_{t_i},\label{dscheme}
\end{align}
where $k_n = k(1+kT/n)^{-1}$ and $\Delta R^n_{t_i}$ (change in the remainder process) is given by
\begin{align}
\Delta R^n_{t_i} & := -\sigma_2 h(X^{H,n}_{t_{i}}) \Delta Z_{t_i} +  \frac{\sigma_1^2}{2} \left(\frac{\Delta W_{t_i}^2}{(1+kT/n)^2} - \frac{T/n}{1+kT/n}\right)\label{R}\\
& \quad + \frac{aT}{n}\left(\frac{1}{1+kT/n} -1\right) - \sigma_1 |X_{t_i}^{H,n}|^\frac{1}{
2
} \Delta W_{t_i} + \frac{\sigma_2h(X^{H,n}_{t_{i}})\Delta Z_{t_i}}{1+kT/n}\notag\\
& \quad  + \Delta M_{t_i}^n + \frac{2}{1+k T/n}D^{-}_{t_{i+1}}.\notag
\end{align}
The term $\Delta M_{t_{i}}^n$ is given by
\begin{align*} 
\Delta M_{t_{i}}^n  & := \frac{\sigma_1\Delta W_{t_i}}{2(1+kT/n)^2}\sqrt{\sigma_1^2\Delta W_{t_i}^2 + 4(1+kT/n)|D_{t_{i+1}}|}
\end{align*}
and the fact that $\Delta M_{t_{i}}^n$ is a martingale increment in the filtration $\mathbb{F} = (\F_t)_{t\geq 0}$ can be quick checked as 
\begin{align*}
& \mathbb{E}[ \Delta M_{t_{i}}^n  | \F_{t_i} ] = \\
&\frac{\sigma_1 \sqrt{T/n}}{2(1+kT/n)^2} \mathbb{E}\Big[
\int_{-\infty}^{\infty} \frac{x}{\sqrt{2\pi}} e^{-\frac{x^2}{2}} \sqrt{\sigma_1^2(T/n)\, x^2 + 4(1+kT/n)|D_{t_{i+1}}|}\,dx \,\Big| \,\F_{t_i}  \Big],
\end{align*}
which is equal to zero. Hence we can conclude that $\mathbb{E}[ \Delta M_{t_{i}}^n |\, \mathcal{F}_{t_i} ] = 0 $. 

The next step is to compute the semimartingale decomposition of $\Delta R^n$. The second term on the right hand side of \eqref{R} is given by
\begin{align*}
	\frac{\sigma_1^2}{2} \left(\frac{\Delta W_{t_i}^2}{(1+kT/n)^2} - \frac{T/n}{1+kT/n}\right) 
	&=
		\frac{\sigma_1^2}{2}
		\left(
			\frac{\Delta W_{t_i}^2-\Delta t_i}{(1+kT/n)^2}
		\right)
		-
		\frac{k\sigma_1^2}{2(1+kT/n)^2}
		\left(
			\frac{T}{n}
		\right)^2
\end{align*}
and for the last term of \eqref{R}, we can write $D^-_{t_{i+1}} =: \Delta M^D_{t_i} + \mathbb{E}[D^-_{t_{i+1}}|\F_{t_i}]$. Finally, by collecting terms appropriately, we can express $\Delta R^n_{t_i} = \Delta N^n_{t_i} + A^n_{t_{i}}$ where
\begin{align*}
	\Delta N^n_{t_i}
	&:=
	\frac{\sigma_1^2}{2}\left(\frac{\Delta W_{t_i}^2 - \Delta t_i }{(1+kT/n)^2}\right)
	- \sigma_1 |{X}_{t_i}^{H,n}|^\frac{1}{2} \Delta W_{t_i}
	+ \Delta M^n_{t_i}
	-\frac{Tk_n\sigma_2 }{n}h(X^{H,n}_{t_{i}}) \Delta Z_{t_i}
	+\frac{2\Delta M^D_{t_i}}{1+kT/n}, \notag\\
	A^n_{t_{i}}
	&:=
	\left(\frac{T}{n}\right)^2
	\left(
			-\frac{k\sigma_1^2}{2(1+kT/n)^2}
		- ak_n
	\right)
	+ \frac{2\mathbb{E}[D^-_{t_{i+1}}|\F_{t_i}]}{(1+kT/n)}.
\end{align*}
The terms in $\Delta N^n$ are martingale differences and $A^n$ is a predictable process.

Finally, the discrete time scheme \eqref{dscheme} can be then extended to continuous time by setting $X^{H,n}_t  := \bar X^{H,n}_t + R^n_t$ where
\begin{align*}
\bar X^{H,n}_t  &:= x_0 + \int^t_0 (a-k_n X^{H,n}_{\eta(s)})ds + \sigma_1\int^t_0 |X^{H,n}_{\eta(s)}|^\frac{1}{2} dW_s+ \sigma_2\int^t_0  h(X^{H,n}_{\eta(s)})dZ_s,\\
R^n_t &:= N^n_t+ \int_{(0,t]}\frac{n}{T}A^n_{\eta(s)}\,ds.
\end{align*}
The extension of the martingale part of the remainder process to continuous time is done by setting $N_t^{{n}} = \mathbb{E}[N_{t_{i+1}}^{n}|\mathcal{F}_t]$ for $t\in (t_i,t_{i+1}]$. 

\subsection{Auxiliary estimates}
In this subsection, we present some auxiliary estimates which are needed to prove the strong convergence of the proposed scheme. 

\begin{Lem}\label{dnegative}
	For $i = 0,\dots, n-1$ and $\alpha\in (1,2)$
	\begin{gather*}
	\mathbb{P}[D_{t_{i+1}} < 0 \,|\, \F_{t_i}]
	\leq \exp(-C_{a,\alpha,\sigma_1,\sigma_2}(\Delta t_i)^{-(2-\alpha)/(\alpha-1)}),
	\end{gather*}
	where $C_{a,\alpha,\sigma_1,\sigma_2}$ is some positive constant depending on $a$, $\alpha$, $\sigma_1$ and $\sigma_2$.
\begin{proof}
See subsection \ref{d} of the Appendix.
\end{proof}
\end{Lem}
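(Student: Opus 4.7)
My plan is to reduce the estimate to a one-sided tail bound on the L\'evy increment $\Delta Z_{t_i}$. Since $X^{H,n}_{t_i} \geq 0$, $a - \sigma_1^2/2 > 0$, and $h(X^{H,n}_{t_i}) \geq 0$, the event $\{D_{t_{i+1}} < 0\}$ can only occur when $h(X^{H,n}_{t_i}) > 0$; on that event, solving the inequality for $\Delta Z_{t_i}$ gives the $\F_{t_i}$-conditional identity
\begin{gather*}
\{D_{t_{i+1}} < 0\} = \Bigl\{ \Delta Z_{t_i} < -\frac{X^{H,n}_{t_i} + (a - \sigma_1^2/2)\Delta t_i}{\sigma_2\, h(X^{H,n}_{t_i})} \Bigr\}.
\end{gather*}
Since $\Delta Z_{t_i}$ is independent of $\F_{t_i}$, it is enough to bound $\mathbb{P}[\Delta Z_{t_i} < -y]$ uniformly in any $\F_{t_i}$-measurable threshold $y > 0$.

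The second step is a Chernoff bound that exploits the spectrally positive structure. The L\'evy measure of $Z$ is supported on $(0,\infty)$, so $\mathbb{E}[e^{-\lambda \Delta Z_{t_i}}]$ is finite for every $\lambda \geq 0$ and equals $\exp(c_\alpha \lambda^\alpha \Delta t_i)$, where $c_\alpha > 0$ comes from the standard identity $\int_0^\infty (e^{-\lambda z} - 1 + \lambda z) z^{-1-\alpha}\,dz = c\,\lambda^\alpha$ valid for $\alpha \in (1,2)$. Markov's inequality yields, for every $\lambda > 0$,
\begin{gather*}
\mathbb{P}[\Delta Z_{t_i} < -y \mid \F_{t_i}] \leq \exp(-\lambda y + c_\alpha \lambda^\alpha \Delta t_i),
\end{gather*}
and optimising in $\lambda$ via $\lambda^\ast \propto (y/\Delta t_i)^{1/(\alpha-1)}$ produces the sub-exponential tail
\begin{gather*}
\mathbb{P}[\Delta Z_{t_i} < -y \mid \F_{t_i}] \leq \exp\bigl(-C_\alpha\, y^{\alpha/(\alpha-1)}\, (\Delta t_i)^{-1/(\alpha-1)}\bigr).
\end{gather*}

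The third step is to minimise the threshold over the unknown value of $X^{H,n}_{t_i}$. Setting $f(x) := (x + c)/\min\{x^{1/\alpha}, H\}$ with $c := (a - \sigma_1^2/2)\Delta t_i$, the relevant threshold is $f(X^{H,n}_{t_i})/\sigma_2$. On the untruncated branch $x \in (0, H^\alpha]$, $f(x) = x^{1-1/\alpha} + c\,x^{-1/\alpha}$, whose global minimum is attained at $x^\ast = c/(\alpha-1)$ and equals a constant multiple of $c^{(\alpha-1)/\alpha}$; on the truncated branch $x > H^\alpha$ the function $f(x) = (x+c)/H$ is only larger (once $\Delta t_i$ is small enough that $x^\ast \leq H^\alpha$; for large $\Delta t_i$ the stated bound is trivial because its right-hand side is close to $1$). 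Hence $y \geq C\,(\Delta t_i)^{(\alpha-1)/\alpha}$ uniformly, and feeding this into the Chernoff estimate gives
\begin{gather*}
C_\alpha\, y^{\alpha/(\alpha-1)}(\Delta t_i)^{-1/(\alpha-1)} \geq C\,(\Delta t_i)^{1 - 1/(\alpha-1)} = C\,(\Delta t_i)^{-(2-\alpha)/(\alpha-1)},
\end{gather*}
which is exactly the claimed bound.

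The main subtlety I anticipate is the interplay of two exponents: the Chernoff optimisation contributes the characteristic spectrally-one-sided-stable exponent $\alpha/(\alpha-1)$ in $y$, while the calculus minimisation of $f$ contributes the reciprocal $(\alpha-1)/\alpha$ in $\Delta t_i$. These must combine cleanly so that the $y$-dependence collapses to a single power of $\Delta t_i$, leaving only the sharp factor $(\Delta t_i)^{-(2-\alpha)/(\alpha-1)}$. The hypothesis $a - \sigma_1^2/2 > 0$ is exactly what keeps $c > 0$ and makes the minimisation of $f$ non-degenerate; without it, the $\Delta t_i$-power in $y$ would collapse and the proof would break, which is consistent with the remark in the introduction that this condition is essential for controlling $\mathbb{P}[D_{t_{i+1}} < 0]$.
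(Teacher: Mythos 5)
Your proposal is correct, and it rests on the same engine as the paper's proof: the finiteness of the one-sided exponential moment of the spectrally positive stable increment, $\mathbb{E}[e^{-\lambda \Delta Z_{t_i}}]=\exp(c_\alpha\lambda^\alpha\Delta t_i)$, combined with a Chernoff bound, with $a-\sigma_1^2/2>0$ supplying the positive buffer that makes the exponent non-degenerate. The organisation differs, though. The paper applies the exponential Markov inequality directly to $D_{t_{i+1}}$ conditionally on $\F_{t_i}$ and then chooses a single tilt $m$ (depending only on $\Delta t_i$, $\sigma_2$, $\alpha$) so that, after using $h(x)^\alpha=\min\{|x|,H^\alpha\}\le x$ for $x\ge 0$, the coefficient of $X^{H,n}_{t_i}$ in the exponent vanishes identically; the bound $\exp\bigl(-(a-\sigma_1^2/2)(\sin(\pi(\alpha-1)/2))^{1/(\alpha-1)}\sigma_2^{-\alpha/(\alpha-1)}(\Delta t_i)^{-(2-\alpha)/(\alpha-1)}\bigr)$ then drops out in one step with an explicit constant. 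You instead rewrite the event as a lower-tail event for $\Delta Z_{t_i}$ with an $\F_{t_i}$-measurable threshold, optimise the tilt for each threshold $y$, and then minimise the threshold over the unknown state $x$; the two optimisations recombine to the same exponent, so the approaches are equivalent, and yours has the minor advantage of isolating exactly what is needed from $Z$ (a one-sided tail estimate), in line with the paper's remark that $Z$ could be any compensated spectrally positive integrable L\'evy process. One small simplification: your caveat about the truncated branch and ``$\Delta t_i$ small enough'' is unnecessary, since $h(x)\le x^{1/\alpha}$ gives $f(x)=(x+c)/h(x)\ge (x+c)/x^{1/\alpha}\ge C_\alpha c^{(\alpha-1)/\alpha}$ for all $x>0$ and all $\Delta t_i$, so the threshold bound is uniform without any case distinction.
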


\begin{Lem}\label{l3.2}
For $\alpha \in (1,2)$, we have $\sup_{n \in \n}\max_{i= 0,\dots, n-1}\mathbb{E}[X^{H,n}_{t_{i+1}}] <\infty $.
	\begin{proof}
		Taking the expectation of the scheme given in \eqref{dscheme} to obtain, for some $C>0$,
		\begin{align*}
		\mathbb{E}[X_{t_{i+1}}^{H,n}] &= \mathbb{E}[X^{H,n}_{t_i}] + (a-k_n\mathbb{E}[{X}_{t_i}^{H,n}])\Delta t_i + \mathbb{E}[A^n_{t_{i}}]\\
		&\leq 
		C\Delta t_i
		+(1+C \Delta t_i)\mathbb{E}[X^{H,n}_{t_i}]
		+2\mathbb{E}[D^{-}_{t_{i+1}}].
		\end{align*}
		To continue the calculation, by using the fact that $D^-$ is positive, we obtain
		\begin{align*}
		\mathbb{E}[D^{-}_{t_{i+1}}|\mathcal{F}_{t_i}]
		&=- (X^{H,n}_{t_i} + (a-\sigma^2_1/2)\Delta t_i)\mathbb{E}[\mathbf{1}_{\{ D_{t_{i+1}} < 0\}}|\mathcal{F}_{t_i}] \\
		&\quad
		-\mathbb{E}[\sigma_2 h(X^{H,n}_{t_{i}}) \Delta Z_{t_i}\mathbf{1}_{\{ D_{t_{i+1}} < 0\}}|\mathcal{F}_{t_i}]\\
		& \leq  \sigma_2\mathbb{E}[|X^{H,n}_{t_i}|^\frac{1}{\alpha}|\Delta Z_{t_i}|\mathbf{1}_{\{ D_{t_{i+1}} < 0\}}|\F_{t_i}].
		\end{align*}
		By taking the expectation above, applying H\"older's inequality with 
		$1/p+1/q = 1$ where $p>1$, $q \in (1,\alpha)$, we obtain 		
		\begin{align*}
		\mathbb{E}[D^{-}_{t_{i+1}}] & \leq \sigma_2\e[|X^{H,n}_{t_i}|^\frac{q}{\alpha}\Delta Z_{t_i}|^q]^\frac{1}{q}\e[\mathbf{1}_{\{ D_{t_{i+1}} < 0\}}]^{\frac{1}{p}} \\
		&\leq  \sigma_2\mathbb{E}[|X^{H,n}_{t_i}|]^\frac{1}{\alpha}\mathbb{E}[|\Delta Z_{t_i}|^{q}]^\frac{1}{q}\mathbb{E}[\mathbf{1}_{\{ D_{t_{i+1}} < 0\}}]^\frac{1}{p}  \\
		&\leq
		C(1+\mathbb{E}[|X^{H,n}_{t_i}|])(\Delta t_i)^\frac{1}{\alpha}\mathbb{E}[\mathbf{1}_{\{ D_{t_{i+1}} < 0\}}]^\frac{1}{p}.
		\end{align*}
		where in the second inequality, we have used the fact that $\Delta Z_{t_i}$ is independent from $X^{H,n}_{t_i}$ and Jensen's inequality (as $\frac{q}{\alpha}<1$). In the last inequality, we have used the fact that $|x|^\frac{1}{\alpha} \leq (1-1/\alpha)+|x|/\alpha$. 	Therefore by using the Lemma \ref{dnegative},
		\begin{align*}
			\mathbb{E}[D^{-}_{t_{i+1}}]
			&\leq
			(1+\mathbb{E}[|X^{H,n}_{t_i}|]) \Delta t_i
			\cdot C(\Delta t_i)^{-\frac{\alpha-1}{\alpha}}
			\exp(-p^{-1}C_{a,\alpha,\sigma_1,\sigma_2}(\Delta t_i)^{-(2-\alpha)/(\alpha-1)})\\
			&\leq C'(1+\mathbb{E}[|X^{H,n}_{t_i}|]) \Delta t_i,
		\end{align*}
		for some $C'>0$. From the above we recover the recursive equation
		\begin{align*}
			1+\mathbb{E}[X_{t_{i+1}}^{H,n}]
			&\leq (1+C''\Delta t_i)
			+ (1 + C''\Delta t_i) \mathbb{E}[X^{H,n}_{t_i}]\\
			&= (1+C'' \Delta t_i) (1+\mathbb{E}[X^{H,n}_{t_i}]),
		\end{align*}
		for some $C''$. This gives $1+\mathbb{E}[X^{H,n}_{t_i}] \leq (1+x)(1+C'' \Delta t_i)^{n} \leq (1+x)e^{C''T}$.
	\end{proof}
\end{Lem}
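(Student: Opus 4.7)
The plan is to take expectations in the expansion \eqref{dscheme} of the scheme and derive a discrete Gr\"onwall-type inequality for $m_i := \mathbb{E}[X^{H,n}_{t_i}]$. Since $\Delta N^n_{t_i}$ is a martingale difference with respect to $\mathbb{F}$, it disappears in expectation, leaving $m_{i+1} = m_i + (a - k_n m_i)\Delta t_i + \mathbb{E}[A^n_{t_i}]$. The deterministic $O((\Delta t_i)^2)$ piece of $A^n_{t_i}$ is harmless, so the entire difficulty is concentrated in the corrector proportional to $\mathbb{E}[D^-_{t_{i+1}}]$ that arose from forcing the discriminate to be non-negative by taking absolute values. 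If I can show $\mathbb{E}[D^-_{t_{i+1}}] \leq C \Delta t_i (1+m_i)$ uniformly in $i$ and $n$, the lemma follows by iterating.

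For the key bound, I exploit the fact that $X^{H,n}_{t_i}+(a-\sigma_1^2/2)\Delta t_i$ is already non-negative, so on the event $\{D_{t_{i+1}}<0\}$ the negative mass must come from the jump contribution, giving the pointwise bound
\[
D^-_{t_{i+1}} \leq \sigma_2 |X^{H,n}_{t_i}|^{1/\alpha}|\Delta Z_{t_i}|\mathbf{1}_{\{D_{t_{i+1}}<0\}}.
\]
I then apply H\"older's inequality with conjugate exponents $p,q$ chosen so that $q \in (1,\alpha)$; this window is forced on me since an $\alpha$-stable increment has a finite $q$-moment precisely for $q<\alpha$. Using the independence of $\Delta Z_{t_i}$ and $X^{H,n}_{t_i}$ together with Jensen's inequality (valid because $q/\alpha < 1$), and invoking the scaling estimate $\mathbb{E}[|\Delta Z_{t_i}|^q]^{1/q}\leq C(\Delta t_i)^{1/\alpha}$ characteristic of $\alpha$-stable processes, I arrive at an estimate of the form
\[
\mathbb{E}[D^-_{t_{i+1}}] \leq C(1+m_i)(\Delta t_i)^{1/\alpha}\,\mathbb{P}[D_{t_{i+1}}<0]^{1/p},
\]
after linearising $|x|^{1/\alpha}\leq (1-1/\alpha)+x/\alpha$ to handle $m_i^{1/\alpha}$.

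Here Lemma \ref{dnegative} enters decisively: the super-exponential decay $\exp(-c(\Delta t_i)^{-(2-\alpha)/(\alpha-1)}/p)$ of $\mathbb{P}[D_{t_{i+1}}<0]^{1/p}$ absorbs the polynomial blow-up $(\Delta t_i)^{1/\alpha-1}$ that would otherwise sit in front of $\Delta t_i$. Hence $\mathbb{E}[D^-_{t_{i+1}}]\leq C'\Delta t_i (1+m_i)$, and substituting back yields $1+m_{i+1} \leq (1+C''\Delta t_i)(1+m_i)$, which iterates to $(1+x_0)(1+C''T/n)^n\leq (1+x_0)e^{C''T}$.

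The main obstacle is the tight interplay of exponents. The factor $|\Delta Z_{t_i}|$ barely fails to lie in $L^\alpha$, so $q$ must be chosen strictly below $\alpha$; the resulting prefactor $(\Delta t_i)^{1/\alpha}$ is too large by a full power of $(\Delta t_i)^{1-1/\alpha}$ compared with what a Gr\"onwall iteration can absorb. The argument only closes because the super-polynomial tail rate from Lemma \ref{dnegative} --- itself sensitive to the hypothesis $a-\sigma_1^2/2>0$ --- dominates every negative power of $\Delta t_i$. Any weakening of that tail estimate would break the scheme, which is why this lemma is really a stress test for Lemma \ref{dnegative}.
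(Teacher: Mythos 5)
Your proposal is correct and follows essentially the same route as the paper: expectation of the expansion \eqref{dscheme}, the pointwise/conditional bound $D^-_{t_{i+1}}\leq \sigma_2|X^{H,n}_{t_i}|^{1/\alpha}|\Delta Z_{t_i}|\mathbf{1}_{\{D_{t_{i+1}}<0\}}$, H\"older with $q\in(1,\alpha)$ plus independence, Jensen and the linearisation $|x|^{1/\alpha}\leq(1-1/\alpha)+|x|/\alpha$, absorption of the $(\Delta t_i)^{1/\alpha-1}$ factor by the exponential estimate of Lemma \ref{dnegative}, and the discrete Gr\"onwall iteration $1+m_{i+1}\leq(1+C''\Delta t_i)(1+m_i)$. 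No gaps to report.
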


\begin{Lem}\label{l3.1}
For all $i =0,\dots, n-1$, there exists positive constants $C$ and $p$ such that 
\begin{gather*}
\mathbb{E}[D^-_{t_{i+1}}] \leq Cn^{-\frac{1}{\alpha}}\mathbb{P}[D_{t_{i+1}} < 0]^{\frac{1}{p}}
\end{gather*}
and $1/\alpha + 1/p = 1$. 
\end{Lem}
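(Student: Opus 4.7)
The plan is to mimic the H\"older argument used in the proof of Lemma \ref{l3.2}, but to squeeze it so that the exponent on $\mathbb{P}[D_{t_{i+1}}<0]$ becomes exactly $1/p=1-1/\alpha$ rather than $1-1/q$ for some $q<\alpha$. The trick that gets past the usual obstruction, namely that $\Delta Z_{t_i}$ has no moment of order $\alpha$, is to observe that the event $\{D_{t_{i+1}}<0\}$ forces $\Delta Z_{t_i}$ to be strictly negative, and the left tail of a spectrally positive $\alpha$-stable law decays super-polynomially, so the truncated moment $\mathbb{E}[|\Delta Z_{t_i}|^\alpha \mathbf{1}_{\{\Delta Z_{t_i}<0\}}]$ is in fact finite.

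First I would note that since $X^{H,n}_{t_i}\geq 0$ and $(a-\sigma_1^2/2)\Delta t_i>0$, on $\{D_{t_{i+1}}<0\}$ one must have $\sigma_2 h(X^{H,n}_{t_i})\Delta Z_{t_i}<0$; in particular $X^{H,n}_{t_i}>0$ and $\Delta Z_{t_i}<0$. Dropping the non-negative contributions of $X^{H,n}_{t_i}$ and $(a-\sigma_1^2/2)\Delta t_i$ and using $h(x)\leq |x|^{1/\alpha}$ gives
\begin{align*}
D^-_{t_{i+1}} \leq \sigma_2\,|X^{H,n}_{t_i}|^{1/\alpha}\,|\Delta Z_{t_i}|\,\mathbf{1}_{\{\Delta Z_{t_i}<0\}}\,\mathbf{1}_{\{D_{t_{i+1}}<0\}}.
\end{align*}
Next I would apply H\"older's inequality to the pair $|X^{H,n}_{t_i}|^{1/\alpha}|\Delta Z_{t_i}|\mathbf{1}_{\{\Delta Z_{t_i}<0\}}$ and $\mathbf{1}_{\{D_{t_{i+1}}<0\}}$ with conjugate exponents $\alpha$ and $p=\alpha/(\alpha-1)$, and then factor using the independence of $\Delta Z_{t_i}$ from $\mathcal{F}_{t_i}$:
\begin{align*}
\mathbb{E}\bigl[|X^{H,n}_{t_i}|\bigl(|\Delta Z_{t_i}|\mathbf{1}_{\{\Delta Z_{t_i}<0\}}\bigr)^{\alpha}\bigr]^{1/\alpha}
= \mathbb{E}[|X^{H,n}_{t_i}|]^{1/\alpha}\,\mathbb{E}[|\Delta Z_{t_i}|^\alpha \mathbf{1}_{\{\Delta Z_{t_i}<0\}}]^{1/\alpha}.
\end{align*}
Lemma \ref{l3.2} bounds $\mathbb{E}[|X^{H,n}_{t_i}|]$ uniformly in $i$ and $n$, and the self-similarity $\Delta Z_{t_i}\stackrel{d}{=}(\Delta t_i)^{1/\alpha}Z_1$ shows that the second factor equals $(\Delta t_i)^{1/\alpha}\mathbb{E}[|Z_1|^\alpha\mathbf{1}_{\{Z_1<0\}}]^{1/\alpha}$, which is of order $n^{-1/\alpha}$.

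The only substantive point is the finiteness of $\mathbb{E}[|Z_1|^\alpha\mathbf{1}_{\{Z_1<0\}}]$, and this is where I expect the only real work. Although $Z_1$ has no $\alpha$-th moment overall, the spectrally positive structure forces the left tail to decay like $\exp(-c\,x^{\alpha/(\alpha-1)})$ as $x\to\infty$, by a classical Zolotarev-type estimate; this is essentially the same super-exponential control that underlies Lemma \ref{dnegative}, so every moment of the negative part of $Z_1$ is finite. Once this is in hand, the pieces combine to give the claimed bound $\mathbb{E}[D^-_{t_{i+1}}]\leq C n^{-1/\alpha}\mathbb{P}[D_{t_{i+1}}<0]^{1/p}$ with $1/\alpha+1/p=1$. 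In a written-out proof I would either cite this left-tail estimate or derive it by a short exponential-Chernoff computation echoing the argument used for Lemma \ref{dnegative}.
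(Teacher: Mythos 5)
Your proof is correct, and it is worth comparing with the paper's own one-line argument. The paper proves Lemma \ref{l3.1} simply by quoting the inequality already derived inside the proof of Lemma \ref{l3.2}, namely the bound obtained from H\"older's inequality applied to $\sigma_2|X^{H,n}_{t_i}|^{1/\alpha}|\Delta Z_{t_i}|\mathbf{1}_{\{D_{t_{i+1}}<0\}}$ together with the uniform moment bound on $X^{H,n}_{t_i}$ and the scaling $\mathbb{E}[|\Delta Z_{t_i}|^{q}]^{1/q}\leq C(\Delta t_i)^{1/\alpha}$. However, that inequality was derived with conjugate exponents $(q,p)$ where $q\in(1,\alpha)$ is strictly less than $\alpha$, precisely because $\mathbb{E}[|\Delta Z_{t_i}|^{\alpha}]=\infty$; taken literally it yields the exponent $1/p=1-1/q<1-1/\alpha$ on $\mathbb{P}[D_{t_{i+1}}<0]$, not the endpoint $1/\alpha+1/p=1$ stated in the lemma. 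Your argument supplies exactly the missing observation needed to reach the endpoint: on $\{D_{t_{i+1}}<0\}$ one necessarily has $\Delta Z_{t_i}<0$, and the negative part of a spectrally positive strictly $\alpha$-stable increment has finite moments of every order (the left tail decays like $\exp(-cx^{\alpha/(\alpha-1)})$, which follows from the Laplace transform $\mathbb{E}[e^{-qZ_t}]$ used in the proof of Lemma \ref{dnegative} via a Chernoff bound), so H\"older with the exact pair $(\alpha,\alpha/(\alpha-1))$ is legitimate once the indicator $\mathbf{1}_{\{\Delta Z_{t_i}<0\}}$ is retained, and self-similarity gives the factor $(\Delta t_i)^{1/\alpha}$. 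Apart from this refinement (and the trivial remark that the case $\sigma_2=0$ or $X^{H,n}_{t_i}=0$ makes the event empty), your skeleton — bound $D^-$ by the jump term, H\"older, independence of $\Delta Z_{t_i}$ from $\mathcal{F}_{t_i}$, Lemma \ref{l3.2} — coincides with the paper's. In practice the distinction is harmless downstream, since $\mathbb{P}[D_{t_{i+1}}<0]$ is exponentially small by Lemma \ref{dnegative} and any positive exponent suffices in Lemma \ref{l2.4}, but your version is the one that proves the lemma exactly as stated.
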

\begin{proof}
It is sufficient to combine Lemma \ref{l3.2} and the following inequality
\begin{align*}
\mathbb{E}[D^{-}_{t_{i+1}}] & \leq  C\sigma_2(1+\mathbb{E}[|X^{H,n}_{t_i}|])\Delta t_i^\frac{1}{\alpha}\mathbb{E}[\mathbf{1}_{\{ D_{t_{i+1}} < 0\}}]^\frac{1}{p} ,
\end{align*}
where $1/\alpha + 1/p =1$.
\end{proof}

\begin{Lem}\label{lem2.4}
For $\alpha \in (1,2)$, we have $$\mathbb{E}[|\Delta M^n_{t_i} - \sigma_1 |{X}_{t_i}^{H,n}|^\frac{1}{2} \Delta W_{t_i}|^2] \leq Cn^{-({\frac{1}{\alpha}+1})}.$$
\end{Lem}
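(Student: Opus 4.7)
The plan is to reduce the $L^2$-bound to a single ``square-root discrepancy'' estimate and then apply the basic inequality $(\sqrt{U}-\sqrt{V})^2 \le |U-V|$ for $U,V \ge 0$. Concretely, writing $\Delta t = T/n$ and factoring the common prefactor, I would first rewrite
\begin{align*}
\Delta M^n_{t_i} - \sigma_1 |X^{H,n}_{t_i}|^{1/2}\Delta W_{t_i}
= \frac{\sigma_1 \Delta W_{t_i}}{2(1+k\Delta t)^2}\Bigl[\sqrt{U_i} - \sqrt{V_i}\,\Bigr],
\end{align*}
where $U_i := \sigma_1^2 \Delta W_{t_i}^2 + 4(1+k\Delta t)|D_{t_{i+1}}|$ and $V_i := 4(1+k\Delta t)^4 X^{H,n}_{t_i}$, chosen so that $\sqrt{V_i} = 2(1+k\Delta t)^2\sqrt{X^{H,n}_{t_i}}$ matches the Euler term exactly.

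Next, applying $(\sqrt{U_i}-\sqrt{V_i})^2 \le |U_i - V_i|$ and taking expectations, it suffices to bound $\mathbb{E}[\Delta W_{t_i}^2 \, |U_i - V_i|]$ by $Cn^{-(1+1/\alpha)}$. Using $|D_{t_{i+1}}| = D_{t_{i+1}} + 2D^{-}_{t_{i+1}}$ and the definition of $D_{t_{i+1}}$, together with the algebraic identity $(1+k\Delta t) - (1+k\Delta t)^4 = O(\Delta t)$, I would expand
\begin{align*}
|U_i - V_i| \le \sigma_1^2 \Delta W_{t_i}^2 + C\Delta t + C\sigma_2 h(X^{H,n}_{t_i})|\Delta Z_{t_i}| + C X^{H,n}_{t_i}\Delta t + 8(1+k\Delta t)D^{-}_{t_{i+1}},
\end{align*}
and handle each term separately after multiplying by $\Delta W_{t_i}^2$.

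The key fact is that $\Delta W_{t_i}$ is independent of both $\mathcal{F}_{t_i}$ and $\Delta Z_{t_i}$, so in every cross-term the $\Delta W_{t_i}^2$ factor contributes $\Delta t$ and one is left to control: (i) $\mathbb{E}[\Delta W_{t_i}^4] = 3\Delta t^2$, (ii) $\mathbb{E}[h(X^{H,n}_{t_i})]\cdot \mathbb{E}[|\Delta Z_{t_i}|]$, where $\mathbb{E}[h(X^{H,n}_{t_i})] \le \mathbb{E}[|X^{H,n}_{t_i}|]^{1/\alpha}$ is finite by Lemma~\ref{l3.2} and $\mathbb{E}[|\Delta Z_{t_i}|] = C\Delta t^{1/\alpha}$ by $\alpha$-self-similarity of the compensated spectrally positive stable process, (iii) $\Delta t \cdot \mathbb{E}[X^{H,n}_{t_i}]$, bounded via Lemma~\ref{l3.2}, and (iv) $\mathbb{E}[D^{-}_{t_{i+1}}] \le Cn^{-1/\alpha}$ from Lemma~\ref{l3.1}. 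The dominant contributions are (ii) and (iv), both of which yield exactly $\Delta t \cdot \Delta t^{1/\alpha} = Cn^{-(1+1/\alpha)}$, while (i) and (iii) are of order $\Delta t^2$ and hence absorbed.

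The main obstacle is the accounting rather than any single hard estimate: one must verify that pulling $\Delta W_{t_i}^2$ out of the expectation (using the independence described above) really does convert the $|\Delta Z_{t_i}|$ and $D^{-}$ contributions into the advertised rate, and that the truncation of $h$ or, alternatively, the Jensen/Lemma~\ref{l3.2} bound on $\mathbb{E}[h(X^{H,n}_{t_i})]$, gives a uniform constant independent of $n$. Once this bookkeeping is in order, collecting the five pieces and dividing by $4(1+k\Delta t)^4$ (which is bounded below) yields the claimed $Cn^{-(1+1/\alpha)}$ bound.
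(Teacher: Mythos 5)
Your proposal is correct and follows essentially the same route as the paper: the paper likewise writes the difference as $\frac{\sigma_1\Delta W_{t_i}}{2\kappa_n^2}\bigl[\sqrt{\sigma_1^2\Delta W_{t_i}^2+4\kappa_n D_{t_{i+1}}+8\kappa_n D^-_{t_{i+1}}}-2\kappa_n^2\sqrt{X^{H,n}_{t_i}}\,\bigr]$, bounds the squared bracket by the difference of the radicands, and then uses $|1-\kappa_n^3|\leq Cn^{-1}$, independence, $\mathbb{E}[|\Delta Z_{t_i}|]\leq Cn^{-1/\alpha}$, Lemma \ref{l3.2} and Lemma \ref{l3.1}. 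Your version simply makes explicit the term-by-term bookkeeping that the paper leaves to the reader, and it is sound.
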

\begin{proof}
To estimate $\big| \Delta M_{t_i}^n - \sigma_1 |{X}_{t_i}^{H,n}|^\frac{1}{2} \Delta W_{t_i}\big|^2$, we proceed by setting $\kappa_n:=1+kT/n$ and we note that it can be estimated by
\begin{align*}
& \left| \frac{\sigma_1\Delta W_{t_i}}{2\kappa_n^2}\left[\sqrt{\sigma_1^2\Delta W_{t_i}^2 + 4\kappa_nD_{t_{i+1}}  + 8\kappa_n D^{-}_{t_{i+1}}}-2\kappa_n^2\sqrt{X_{t_i}^{H,n}}\,\,\right]\right|^2
\leq \nonumber  \\
& \frac{\sigma_1^2|\Delta W_{t_i}|^2}{4\kappa_n^4}
\left| \sigma_1^2\Delta W_{t_i}^2 + 4\kappa_n\big\{(1-\kappa_n^3){X}_{t_i}^{H,n} + \left( a - \sigma_1^2/2 \right)T/n + \sigma_2 h({X}_{t_i}^{H,n})\Delta Z_{t_i} +2 D^{-}_{t_{i+1}}\big\} \right|.
\end{align*}
The right hand side of the above inequality can be estimated using the fact that $|1-\kappa_n^3| \leq Cn^{-1}$, $X_{t_i}^{H}$ and $\Delta Z_{t_i}$ are independent and $\mathbb{E}[|\Delta Z_{t_i}|] \leq Cn^{-\frac{1}{\alpha}}$. Therefore, by Lemma \ref{l3.2} and Lemma \ref{l3.1}, the expectation of the right hand side above is bounded by an integrable random variable multiplied by $n^{-(\frac{1}{\alpha}+1)}$.
\end{proof}


\begin{Lem}\label{l2.4}
The remainder process satisfies $\mathbb{E}[|R^n_t|] \leq C_T n^{-\frac{1}{2\alpha}}$ or more specifically, the process $N^n$ and $A^n$ satisfies 
\begin{gather*}
\mathbb{E}[|N^n_t|] \leq C_Tn^{-\frac{1}{2\alpha}} \quad \mathrm{and} \quad \mathbb{E}\left[\left|\int_{(0,t]} \frac{n}{T} A^n_{\eta(s)} ds \right|\right] \leq C_Tn^{-1},
\end{gather*}
where $C_T$ is some constant depending on $T$.
\end{Lem}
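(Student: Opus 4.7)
Since $R^n = N^n + \int_{(0,\cdot]}(n/T)A^n_{\eta(s)}\,ds$, I would treat the two pieces separately, showing that the drift integral is $O(n^{-1})$ while the martingale part $N^n$ is $O(n^{-1/(2\alpha)})$; because $1/(2\alpha) < 1$, the latter controls the overall rate.

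For the drift integral, since $\Delta t_i = T/n$, the Riemann sum reduces, up to an $O(n^{-1})$ boundary piece, to the partial sum $\sum_{i<j}A^n_{t_i}$ for $t\in(t_j,t_{j+1}]$. The first summand of $A^n_{t_i}$ is deterministic of order $n^{-2}$, so aggregating $n$ terms produces $O(n^{-1})$. For the second summand $(2/\kappa_n)\mathbb{E}[D^-_{t_{i+1}}|\F_{t_i}]$, combining Lemma \ref{l3.1} with the exponential estimate of Lemma \ref{dnegative} gives $\mathbb{E}[D^-_{t_{i+1}}] \leq C n^{-1/\alpha}\exp\!\bigl(-c\,n^{(2-\alpha)/(p(\alpha-1))}\bigr)$, which is super-polynomially small and negligible against $n^{-1}$ after summing over $i$.

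For the martingale piece, the identity $N^n_t = \mathbb{E}[N^n_{t_{i+1}}|\F_t]$ on $(t_i,t_{i+1}]$ together with Jensen's inequality reduces the estimate to grid points, where $N^n_{t_j}=\sum_{i<j}\Delta N^n_{t_i}$ is a sum of orthogonal martingale differences. I would split $\Delta N^n_{t_i}$ into four contributions: (a) the It\^o-correction $(\sigma_1^2/2\kappa_n^2)(\Delta W_{t_i}^2-\Delta t_i)$, with per-step variance $O(n^{-2})$; (b) the ``Brownian linearisation'' residual $\Delta M^n_{t_i} - \sigma_1 |X_{t_i}^{H,n}|^{1/2}\Delta W_{t_i}$, whose per-step variance is $O(n^{-(1+1/\alpha)})$ by Lemma \ref{lem2.4}; (c) the jump term $-(Tk_n\sigma_2/n)h(X^{H,n}_{t_i})\Delta Z_{t_i}$; and (d) the compensator residual $2\Delta M^D_{t_i}/\kappa_n$. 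For (a) and (b), orthogonality converts the per-step variances into total $L^2$-bounds of $O(n^{-1/2})$ and $O(n^{-1/(2\alpha)})$ respectively; since $\alpha>1$ the latter decays more slowly and is dominant, and $L^1\leq L^2$ delivers the claimed rate.

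The main obstacle is precisely that $\Delta Z_{t_i}$ lies in $L^q$ only for $q<\alpha<2$, which rules out a uniform $L^2$/BDG treatment of the full martingale. I would therefore handle (c) and (d) in $L^1$ via the triangle inequality. For (c), the truncation $h\leq H$ together with the $\alpha$-stable scaling $\mathbb{E}[|\Delta Z_{t_i}|]\leq C(\Delta t_i)^{1/\alpha}$ yields a total contribution of order $C_{H,T}\,n^{-1/\alpha}$, which is smaller than $n^{-1/(2\alpha)}$. For (d), $|\Delta M^D_{t_i}|\leq D^-_{t_{i+1}} + \mathbb{E}[D^-_{t_{i+1}}|\F_{t_i}]$ together with Lemmas \ref{l3.1} and \ref{dnegative} makes the aggregate super-polynomially small. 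Collecting the four bounds gives $\mathbb{E}[|N^n_t|]\leq C_T n^{-1/(2\alpha)}$, and combining with the drift estimate gives $\mathbb{E}[|R^n_t|]\leq C_T n^{-1/(2\alpha)}$.
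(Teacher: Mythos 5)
Your proposal follows essentially the same route as the paper: the same four-way decomposition of $\Delta N^n$ (the It\^o-correction term, the linearisation residual controlled by Lemma \ref{lem2.4}, the jump term, and the compensator residual $\Delta M^D$), with $L^2$-orthogonality for the first two pieces, $L^1$/triangle-inequality bounds via Lemmas \ref{l3.1} and \ref{dnegative} for the last two, and the exponential smallness of $\mathbb{E}[D^-_{t_{i+1}}]$ together with the deterministic $O(n^{-2})$ term for the drift integral. The only (harmless) deviation is that you bound the jump term using $h\le H$, which makes that contribution's constant depend on $H$, whereas the paper uses $h(x)\le |x|^{1/\alpha}$ and the uniform moment bound of Lemma \ref{l3.2} so that $C_T$ is independent of $H$; either way the stated rates $n^{-1}$ and $n^{-\frac{1}{2\alpha}}$ follow.
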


\begin{proof}
Using Lemma \ref{dnegative}, that is $\mathbb{E}[D^-_{t_{i+1}}]$ is exponentially small with respect to the discretisation grid, we obtain 
\begin{align*}
\mathbb{E}\left[\left|\int_{(0,t]} \frac{n}{T} A^n_{\eta(s)} ds \right|\right] \leq \sum_{i=1}^n\mathbb{E}[|A^n_{t_i} |] \leq C_Tn^{-1}.
\end{align*}
Recall that the martingale differences $\Delta N^n$ can be expressed as
\begin{align*}
\Delta N^n_{t_i}
= \Delta \hat M^n_{t_i} + \Delta \bar M^n_{t_i} - \frac{T}{n}k_n\sigma_2 h(X_{t_i}^{H,n}) \Delta Z_{t_i}   + \frac{2\Delta M^D_{t_i}}{(1+kT/n)},
\end{align*}
where we set
\begin{align*}
	\Delta \hat M^n_{t_i}
	:=\frac{\sigma_1^2}{2}\left(\frac{\Delta W_{t_i}^2 - \Delta t_i }{(1+kT/n)^2}\right)
	\quad \text{and} \quad
	\Delta \bar M^n_{t_i}
	:=\Delta M_{t_i}^n - \sigma_1 |{X}_{t_i}^{H,n}|^\frac{1}{2} \Delta W_{t_i}.
\end{align*}

In order to estimate $\mathbb{E}[|N_t^n|]$, we note that for $t\in [0,T]$
\begin{align*}
N^n_t 
& = \sum_{j=0}^{n-1} \1_{(t_j,t_{j+1}]}(t) \mathbb{E}\big[ \sum_{i=0}^{j} \Delta N^n_{t_i} \big|\, \F_{t}\big].
\end{align*}
By taking the absolute value, applying the Cauchy-Schwarz inequality, Jensen's inequality and the $L^2$-isometry to the martingale increment $\Delta \hat M^n_{t_i} + \Delta \bar M^n_{t_i}$, we obtain
\begin{align*}
	\mathbb{E}[|\sum^j_{i=0} \Delta N^n_{t_i}|] 
	&
	\leq \mathbb{E}[|\sum_{i=0}^j \Delta (\hat M^n_{t_i} + \bar M^n_{t_i})|]
	+\sum_{i=0}^j\frac{|k|T\sigma_2}{n}\mathbb{E}[|X_{t_i}^{H,n}|^{\frac{1}{\alpha}}] \mathbb{E}[|\Delta Z_{t_i}|]
	+2\sum_{i=1}^j\mathbb{E}[|\Delta M^D_{t_i}|]\\
	& \leq \Big[\sum^n_{i=1} \mathbb{E}\big[\big(\Delta (\hat M^n_{t_i} + \bar M^n_{t_i})\big)^2\big]\Big]^\frac{1}{2} + Cn^{-\frac{1}{\alpha}}
	+2\sum^n_{i=1}\mathbb{E}[|\Delta M^D_{t_i}|]\\
	& \leq \Big[2\sum_{i=1}^n \big[\mathbb{E}[(\Delta \hat M^n_{t_i})^2] + \mathbb{E}[(\Delta\bar M^n_{t_i})^2]\big]\Big]^\frac{1}{2} + Cn^{-\frac{1}{\alpha}}+ n^{1-\frac{1}{\alpha}}\exp(-cn^{\frac{2-\alpha}{\alpha-1}}),
\end{align*}
where $\mathbb{E}[|\Delta M^D_{t_i}|] \leq \exp(-cn^{\frac{2-\alpha}{\alpha-1}})$ follows from Lemma \ref{dnegative} and Lemma \ref{l3.1}. On the other hand, from Lemma \ref{lem2.4}
\begin{align*}
\sum_{i=0}^n \big[\mathbb{E}[(\Delta \hat M_{t_i})^2] + \mathbb{E}[(\Delta\bar M_{t_i})^2]\big]   \leq C n(n^{-2} + n^{-(\frac{1}{\alpha}+1)}) = C(n^{-1} + n^{-\frac{1}{\alpha}})
\end{align*}
and the result follows by taking the square root.
\end{proof}
\begin{Lem}\label{lem2}
Let $X^{H,n}$ be the scheme defined in \eqref{scheme} then
\begin{align*}
\sup_{t\leq T}\mathbb{E}[|X^{H,n}_{t} - X^{H,n}_{\eta(t)}|] \leq C_Tn^{-\frac{1}{\alpha}}.
\end{align*}
\begin{proof}
By taking the absolute value, the expectation and using independence increments property, for every $i = 0,\dots, n-1$ we obtain for $t\in (t_i,t_{i+1}]$
\begin{align*}
\mathbb{E}[|X_{t}^{H,n}- X_{\eta(t)}^{H,n}|] &\leq  (a+|k|\mathbb{E}[{X}_{t_i}^{H,n}])T/n+ \sigma_1 \mathbb{E}[|{X}_{t_i}^{H,n}|^\frac{1}{2}](T/n)^\frac{1}{2}\\
& \quad + C\sigma_2 \mathbb{E}[|{X}_{t_i}^{H,n}|^\frac{1}{\alpha}](T/n)^\frac{1}{\alpha}+ \mathbb{E}[|\Delta N^n_{t_i}|]+ \mathbb{E}{[|A^n_{t_{i}}|]} \leq  C_T n^{-\frac{1}{\alpha}},
\end{align*}
where we have applied Lemma \ref{l3.2} and Lemma \ref{l2.4} in the last line.
%
\end{proof}
\end{Lem}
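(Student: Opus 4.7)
The plan is to work one grid interval at a time using the continuous-time decomposition $X^{H,n}_t = \bar X^{H,n}_t + R^n_t$ constructed just above. Fix $t \in (t_i, t_{i+1}]$; since $\eta(s) = t_i$ for $s \in (t_i, t]$, the one-step displacement decomposes as
\begin{align*}
X^{H,n}_t - X^{H,n}_{t_i}
&= (a - k_n X^{H,n}_{t_i})(t-t_i) + \sigma_1 |X^{H,n}_{t_i}|^{1/2}(W_t - W_{t_i}) \\
&\quad + \sigma_2 h(X^{H,n}_{t_i})(Z_t - Z_{t_i}) + (R^n_t - R^n_{t_i}).
\end{align*}
I would then take absolute values, apply the triangle inequality, and bound each of the four pieces in $L^1(\mathbb{P})$ separately. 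Since every bound depends only on $t - t_i \leq T/n$, uniformity in $t \leq T$ follows immediately.

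For the drift piece, Lemma~\ref{l3.2} yields $\mathbb{E}[|(a - k_n X^{H,n}_{t_i})(t - t_i)|] \leq (a + |k|\mathbb{E}[X^{H,n}_{t_i}])\cdot T/n = O(n^{-1})$. For the Brownian piece I would use the independence of $W_t - W_{t_i}$ from $\F_{t_i}$ to factorise the expectation, then combine $\mathbb{E}[|W_t - W_{t_i}|] \leq C(T/n)^{1/2}$ with the concavity bound $|x|^{1/2} \leq 1 + |x|$ and Lemma~\ref{l3.2} to obtain $O((T/n)^{1/2})$. The jump piece is handled in the same spirit: independence of $Z_t - Z_{t_i}$ from $\F_{t_i}$, the truncation bound $h(x) \leq |x|^{1/\alpha}$, the inequality $|x|^{1/\alpha} \leq 1 + |x|$, Lemma~\ref{l3.2}, together with the $\alpha$-stable scaling estimate $\mathbb{E}[|Z_t - Z_{t_i}|] \leq C(T/n)^{1/\alpha}$ for the compensated spectrally positive driver, yield $O((T/n)^{1/\alpha})$.

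The main obstacle is the remainder increment $R^n_t - R^n_{t_i}$. For $t \in (t_i, t_{i+1}]$ the martingale extension satisfies $N^n_t - N^n_{t_i} = \mathbb{E}[\Delta N^n_{t_i} \mid \F_t]$, so Jensen's inequality reduces the problem to controlling $\mathbb{E}[|\Delta N^n_{t_i}|] + \mathbb{E}[|A^n_{t_i}|]$. The predictable piece $A^n_{t_i}$ contributes $O(n^{-2})$ plus an exponentially small correction from $\mathbb{E}[D^-_{t_{i+1}}\mid \F_{t_i}]$, which is handled via Lemmas~\ref{dnegative}--\ref{l3.1}. For the martingale piece I would estimate each of the four summands of $\Delta N^n_{t_i}$ in $L^2$ and pass to $L^1$ by Cauchy--Schwarz, the delicate term being the square-root martingale increment $\Delta M^n_{t_i}$, which must be compared with $\sigma_1|X^{H,n}_{t_i}|^{1/2}\Delta W_{t_i}$ through Lemma~\ref{lem2.4}. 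Collecting all four bounds shows the remainder contribution is of strictly smaller order than the jump piece, and assembling the four estimates delivers the claimed $C_T n^{-1/\alpha}$.
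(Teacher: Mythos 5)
Your proposal is correct and follows essentially the same route as the paper: decompose the one-step increment via the continuous extension $X^{H,n}_t=\bar X^{H,n}_t+R^n_t$, bound the drift, diffusion and jump pieces through Lemma \ref{l3.2}, independence of the increments and the scaling $\mathbb{E}[|\Delta W_{t_i}|]\leq C n^{-1/2}$, $\mathbb{E}[|\Delta Z_{t_i}|]\leq C n^{-1/\alpha}$, and reduce the remainder to $\mathbb{E}[|\Delta N^n_{t_i}|]+\mathbb{E}[|A^n_{t_i}|]$, controlled by the estimates underlying Lemmas \ref{dnegative}--\ref{l2.4}. You merely spell out steps the paper leaves implicit (Jensen for the conditional-expectation extension of $N^n$ and Cauchy--Schwarz on the summands of $\Delta N^n_{t_i}$), so no further changes are needed.
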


\begin{remark}\label{rem2.1}
In the case $\sigma_1 = 0$, the estimates obtained in Lemma \ref{dnegative}, Lemma \ref{l3.2}, Lemma \ref{l3.1}, Lemma \ref{l2.4} and Lemma \ref{lem2} remains the same, and the term considered in Lemma \ref{lem2.4} is zero.
\end{remark}

\subsection{Strong rate of convergence for the truncated alpha-CIR}
We present in the following the strong rate of convergence of the positivity preserving approximation scheme $X^{H,n}$ to the truncated alpha-CIR process $X^{H}$ as $n\uparrow\infty$. The proof relies on a careful application of the Yamada-Watanabe approximation technique. For readers who are unfamiliar with the approximation technique, we included some useful results in section \ref{yamada} of the appendix.

Before proceeding, we point out that the Yamada-Watanabe approximation technique can not be applied directly to $|X^{H} - X^{H,n}|$ as done in Alfonsi \cite{Alf}. This is due to the presence of the remainder process $R^n$. The differences/difficulties that we faced here are the following, (i) the martingale representation property do not hold and one can not hope to obtain estimates from an direct application of the It\^o formula for L\'evy processes, see for example Applebaum \cite{AD}, as the explicit form of the martingale $M^D$ is not known. (ii) Even in the case where $M^D$ can be computed or $Z$ is square integrable and therefore martingale representation property holds, the monotone increasing condition on the jump coefficients (which is crucial in the proof of strong uniqueness in \cite{LiMy} and \cite{FL}) may not be satisfied. To overcome the above mentioned issues, we notice that the strong error can be decomposed into $|X_t^{H} - X^{H,n}_t| \leq | X_t^{H} - \bar X^{H,n}_t| + |R^n_t|$. The estimate of the process $R^n$ is readily available in Lemma \ref{l2.4}, and we need only to apply the Yamada-Watanabe approximation technique to the process $Y^{H,n} := X^{H}- \bar X^{H,n}$. 

\begin{Thm}\label{main_1}
Let $\sigma_1> 0$, $\sigma_2 > 0$ then the strong rate of convergence is given by
\begin{gather*}
\sup_{t\leq T}\mathbb{E}[|X_t^{H} - X^{H,n}_t|]
\leq
C\{ (\log n)^{-1}+H(\log n)^{\alpha-1}n^{-\frac{1}{4\alpha^2}}\}.
\end{gather*}
Let $\sigma_1= 0$, $\sigma_2 > 0$ then the strong rate of convergence is given by
\begin{gather*}
\sup_{t\leq T}\mathbb{E}[|X_t^{H} - X^{H,n}_t|] \leq CH n^{-(\frac{2}{\alpha}-1)\frac{1}{4\alpha}}.
\end{gather*}
\end{Thm}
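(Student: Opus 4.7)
The plan is to follow the decomposition sketched immediately before the theorem: write $|X^H_t - X^{H,n}_t| \leq |Y^{H,n}_t| + |R^n_t|$ with $Y^{H,n}_t := X^H_t - \bar X^{H,n}_t$, so that $|R^n_t|$ is already controlled by Lemma \ref{l2.4} and the remaining task is to estimate $\mathbb{E}[|Y^{H,n}_t|]$. For this I would invoke the Yamada-Watanabe smoothing technique recalled in section \ref{yamada} of the appendix, i.e.\ fix a smooth nonnegative approximation $\phi_{\delta,\epsilon}$ of $|\cdot|$ with $\phi_{\delta,\epsilon}(y) - |y| \leq \epsilon$ and $\phi''_{\delta,\epsilon}$ supported near the origin, apply the It\^o formula for L\'evy-driven semimartingales to $\phi_{\delta,\epsilon}(Y^{H,n}_t)$, and take expectations to eliminate the martingale terms. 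What remains are three pieces: a drift compensator, a Brownian compensator, and a Poisson compensator.

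For the drift piece, I would split $k_n X^{H,n}_{\eta(s)} - k X^H_s = (k_n - k) X^{H,n}_{\eta(s)} - k(X^H_s - X^{H,n}_{\eta(s)})$ and further decompose $X^H_s - X^{H,n}_{\eta(s)} = Y^{H,n}_s - R^n_s + (X^{H,n}_s - X^{H,n}_{\eta(s)})$ via $X^{H,n} = \bar X^{H,n} + R^n$. Combined with $|k_n - k| = O(1/n)$ and the estimates in Lemma \ref{l3.2}, Lemma \ref{lem2} and Lemma \ref{l2.4}, the drift contribution reduces to $\int_0^t \mathbb{E}|Y^{H,n}_s|\,ds$ plus negligible remainders. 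For the Brownian compensator, the nonnegativity of both $X^H$ and the scheme $X^{H,n}$ permits the inequality $(\sqrt{a}-\sqrt{b})^2 \leq |a-b|$, which together with the standard Yamada-Watanabe bound $|y|\phi''_{\delta,\epsilon}(y) \leq C/\log(\delta^{-1})$ extracts a term of order $\sigma_1^2/\log(\delta^{-1})$ from the $|Y^{H,n}_s|$ part, leaving a residual of order $\|\phi''_{\delta,\epsilon}\|_\infty$ multiplied by $\mathbb{E}|R^n_s| + \mathbb{E}|X^{H,n}_s - X^{H,n}_{\eta(s)}|$, which is controlled by Lemmas \ref{l2.4} and \ref{lem2}.

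The Poisson compensator is the most delicate piece. I would split the inner $z$-integral at a level $u>0$. On small jumps $z \leq u$, a second-order Taylor expansion of $\phi_{\delta,\epsilon}$ combined with $\int_0^u z^2 \nu(dz) \lesssim u^{2-\alpha}$ and the truncation $|h| \leq H$ gives a contribution of order $\|\phi''_{\delta,\epsilon}\|_\infty H^2 u^{2-\alpha}$; on large jumps $z > u$, the $1$-Lipschitzness of $\phi_{\delta,\epsilon}$ together with $\int_u^\infty z\,\nu(dz) \lesssim u^{1-\alpha}$ gives a contribution of order $H u^{1-\alpha}$. To obtain the prefactor $H$ (rather than $H^\alpha$) appearing in the statement, a finer bound on the small-jump integrand that also exploits the H\"older continuity $|h(x)-h(y)| \leq C|x-y|^{1/\alpha}$, together with the fact that $\phi''_{\delta,\epsilon}$ concentrates in a thin annulus near the origin on which $|Y^{H,n}|^{2/\alpha - 1}$ is well controlled, will be needed before one optimizes in $u$.

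Applying Gronwall then yields $\mathbb{E}[|Y^{H,n}_t|]$ as a sum of $\epsilon$, $C/\log(\delta^{-1})$, an explicit function of $\|\phi''_{\delta,\epsilon}\|_\infty \sim 1/(\epsilon\delta\log(\delta^{-1}))$ and $H$, and the discretization remainders weighted by $\|\phi''_{\delta,\epsilon}\|_\infty$; the stated rates then follow by choosing $\epsilon$ and $\delta$ as explicit functions of $n$, with $\delta$ polynomial in $n$ so that $\log(\delta^{-1}) \sim \log n$. The hard part will be this three-way balancing: the jump estimate grows with $\|\phi''_{\delta,\epsilon}\|_\infty$, preventing $\epsilon\delta$ from being too small, while the Brownian term forces $\delta$ itself to be small on only a logarithmic scale; the exponent $n^{-1/(4\alpha^2)}$ and the logarithmic prefactor $(\log n)^{\alpha-1}$ should emerge from the optimum of this trade-off. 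When $\sigma_1 = 0$ the Brownian bottleneck disappears, $\delta$ may be taken of order unity, and optimizing the jump estimate alone produces the sharper polynomial rate $Hn^{-(2-\alpha)/(4\alpha^2)}$ of the second assertion.
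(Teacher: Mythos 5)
Your overall skeleton coincides with the paper's: decompose the error as $|X^H_t-X^{H,n}_t|\leq |Y^{H,n}_t|+|R^n_t|$ with $Y^{H,n}=X^H-\bar X^{H,n}$, apply It\^o's formula to $\phi_{\delta,\varepsilon}(Y^{H,n})$, treat drift, diffusion and jump compensators, use Gronwall, and then tune $\varepsilon,\delta$ (your exponents, including $n^{-(2-\alpha)/(4\alpha^2)}=n^{-(\frac{2}{\alpha}-1)\frac{1}{4\alpha}}$ for $\sigma_1=0$, agree with the statement; your $\delta\in(0,1)$ convention versus the paper's $\delta\in(1,\infty)$ is immaterial). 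The drift and Brownian pieces are handled exactly as in the paper. However, there is a genuine gap in the jump compensator, which you yourself flag as the delicate step but do not resolve. Your explicit plan — Taylor on $z\leq u$ with the crude bound $|h|\leq H$, Lipschitzness on $z>u$, then optimize in $u$ — produces $\|\phi_{\delta,\varepsilon}''\|_\infty H^2u^{2-\alpha}+Hu^{1-\alpha}$, whose optimum in $u$ is of order $\|\phi_{\delta,\varepsilon}''\|_\infty^{\alpha-1}H^{\alpha}$: this carries no decay in $n$ at all (indeed it grows under your choices of $\varepsilon,\delta$), so the argument cannot be closed from there, and the vague appeal to H\"older continuity of $h$ plus concentration of $\phi''_{\delta,\varepsilon}$ near the origin does not fix it, because $|h(X^H_s)-h(X^{H,n}_{\eta(s)})|$ is not controlled by $|Y^{H,n}_s|^{1/\alpha}$.

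The missing idea is the splitting of the jump difference through the continuously interpolated scheme $\bar X^{H,n}_s$, i.e. $h(X^H_s)-h(X^{H,n}_{\eta(s)})=\{h(X^H_s)-h(\bar X^{H,n}_s)\}+\{h(\bar X^{H,n}_s)-h(X^{H,n}_{\eta(s)})\}$, which induces the decomposition $K=K^{1}+K^{2}$ in the paper. For $K^{1}$ one uses that $h$ is non-decreasing, so $x=\sigma_2\{h(X^H_s)-h(\bar X^{H,n}_s)\}$ has the same sign as $y=Y^{H,n}_s$; this sign condition $xy\geq 0$ is exactly what Lemma \ref{key_lem_0} requires, and it yields a bound supported on $\{|Y^{H,n}_s|\leq\varepsilon\}$ which, combined with $|x|\leq\sigma_2|Y^{H,n}_s|^{1/\alpha}$, gives $\varepsilon^{\frac{2}{\alpha}-1}/\log\delta+\varepsilon^{\frac{1}{\alpha}}$ — no $H$, no $\|\phi''\|_\infty$. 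The $n$-dependence and the $H$ prefactor come entirely from $K^{2}$, estimated via Lemma \ref{key_lem12} with the boundedness $|h|\leq H$, the $\tfrac1\alpha$-H\"older continuity, Jensen's inequality and the estimate $\mathbb{E}[|\bar X^{H,n}_s-X^{H,n}_{\eta(s)}|]\lesssim n^{-\frac{1}{2\alpha}}$ from Lemmas \ref{l2.4} and \ref{lem2} (see \eqref{e5}), producing the term $\bigl(1+\tfrac{H\delta}{\varepsilon\log\delta}\bigr)n^{-\frac{1}{2\alpha^2}}$ of \eqref{EP_8}. Without this sign-aligned/mismatch splitting, the Yamada--Watanabe mechanism cannot be applied to the jump part and the stated rates cannot be derived; with it, your subsequent Gronwall step and parameter choices go through as you describe.
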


\begin{proof}
Let $\varepsilon \in (0,1)$ and $\delta \in (1,\infty)$. We define ${Y}^{H,n}:=X^{H}-\bar X^{H,n}$ and denote the jumps of $Y^{H,n}$ by $\Delta Y^{H,n}_s(z) := \sigma_2 \{h(X_{s}^{H})-h(X_{\eta(s)}^{H,n})\}z$. By applying It\^o's formula to the Yamada-Watanabe function $\phi_{\delta,\epsilon}(Y^{H,n})$, see equation \eqref{yamada1}, we obtain
	\begin{align*}
	&|{Y}_t^{H,n}|
	\leq \varepsilon
	+\phi_{\delta,\varepsilon}({Y}_t^{H,n})
	=\varepsilon
	+{M}_t^{n,\delta,\varepsilon}
	+{I}_t^{n,\delta,\varepsilon}
	+{J}_t^{n,\delta,\varepsilon}
	+{K}_t^{n,\delta,\varepsilon},
	\end{align*}
where we have set
	\begin{align*}
	{M}_t^{n,\delta,\varepsilon}
	:=& \sigma_1 \int_{0}^{t} \phi_{\delta,\varepsilon}'(Y_{s-}^{H,n})\{|X_{s}^{H}|^\frac{1}{2}-|X_{\eta(s)}^{H,n}|^\frac{1}{2}\}dW_{s}\\
	&+\int_{0}^{t} \int_{0}^{\infty}
	\left\{
	\phi_{\delta,\varepsilon}(Y_{s-}^{H,n}+\Delta Y^{H,n}_s(z))-\phi_{\delta,\varepsilon}(Y_{s-}^{H,n})
	\right\}
	{\widetilde N}(ds,dz),\\
	{I}_t^{n,\delta,\varepsilon}
	:=&\int_{0}^{t} \phi_{\delta,\varepsilon}'(Y_{s-}^{H,n})\{-k_nX_{s}^{H} + k_nX_{\eta(s)}^{H,n}\}ds,
	\quad\\
	{J}_t^{n,\delta,\varepsilon}
	:=&\frac{\sigma_1^2}{2} \int_{0}^{t} \phi_{\delta,\varepsilon}''(Y_{s}^{H,n}) ||X_{s}^{H}|^\frac{1}{2}-|X_{\eta(s)}^{H,n}|^\frac{1}{2}|^2ds,\\
	{K}_t^{n,\delta,\varepsilon}
	:=&
	\int_{0}^{t} \int_{0}^{\infty}
	\Big\{
	\phi_{\delta,\varepsilon}(Y_{s-}^{H,n}+\Delta Y^{H,n}_s(z))-\phi_{\delta,\varepsilon}(Y_{s-}^{H,n})-\Delta Y^{H,n}_s(z) \phi_{\delta,\varepsilon}'(Y_{s-}^{H,n})
	\Big\}
	\nu(dz)ds.
	\end{align*}
The local martingale term $M^{n,\delta. \epsilon}$ will disappear after applying the standard localization argument and taking the expectation. In the following, we suppose that all integrals are appropriately stopped, and to simplify notation, we do not explicitly write the localizing sequence of stopping times and will focus on obtaining upper estimates of ${K}_t^{n,\delta,\varepsilon}$, ${I}_t^{n,\delta,\varepsilon}$ and ${J}_t^{n,\delta,\varepsilon}$ which are independent of the localizing sequence.
%
%

To estimate $K_t^{n,\delta,\varepsilon}$, we write it into two terms
$
	K_t^{n,\delta,\varepsilon}
	=K_t^{n,\delta,\varepsilon,1}+K_t^{n,\delta,\varepsilon,2}
$ where
	\begin{align*}
	K_t^{n,\delta,\varepsilon,1}
	&:=
	\int_{0}^{t} \int_{0}^{\infty}
	\Big\{
	\phi_{\delta,\varepsilon}(Y_s^{H,n}+\sigma_2 \{h(X_s^{H})-h(\bar X^{H,n}_s)\}z)-\phi_{\delta,\varepsilon}(Y_s^{H,n})\\
	&\quad 	-\sigma_2 \{h(X_s^{H})-h(\bar X^{H,n}_s)\}z \phi_{\delta,\varepsilon}'(Y_{s}^{H,n})
	\Big\}
	\nu(dz)ds\\
	K_t^{n,\delta,\varepsilon,2}
	&:=
	\int_{0}^{t} \int_{0}^{\infty}
	\Big\{
	\phi_{\delta,\varepsilon}(Y_s^{H,n}+\sigma_2 \{h(X_s^{H})-h(X_{\eta(s)}^{H,n})\}z)-\phi_{\delta,\varepsilon}(Y_s^{H,n}+\sigma_2\{h(X_{s}^{H})-h(\bar X^{H,n}_s)\}z)\\
	& \quad -\sigma_2\{h(\bar X^{H,n}_s)-h(X_{\eta(s)}^{H,n})\}z \phi_{\delta,\varepsilon}'(Y_s^{H,n})
	\Big\}
	\nu(dz)ds.
	\end{align*}

	We observe that if $Y^{H,n} = X^{H}-\bar X^{H,n}=0$ then $h(X^{H})-h(\bar X^{H,n})=0$. Therefore we can apply Lemma \ref{key_lem_0} with $y=Y_s^{H,n}$ and $x=\sigma_2\{h(X_s^{H})-h(\bar X^{H,n}_s)\}$, since $h(x) = \min\{|x|^\frac{1}{\alpha},H\}$ is non-decreasing. We obtain for any $u>0$, 
	\begin{align}
	&\int_{0}^{\infty}
	\left\{
	\phi_{\delta,\varepsilon}(Y_s^{H,n}+\sigma_2\{h(X_s^{H})-h(\bar X^{H,n}_s)\}z)-\phi_{\delta,\varepsilon}(Y_s^{H,n})
	-\sigma_2\{h(X_s^{H})-h(\bar X^{H,n}_s)\}z \phi_{\delta,\varepsilon}(Y_s^{H,n})
	\right\}
	\nu(dz) \notag\\
	&\leq
	\frac{2\sigma_2^2|h(X_s^{H})-h(\bar X^{H,n}_s)|^2 \1_{(0,\varepsilon]}(|Y_s^{H,n}|)}{|Y_s^{H,n}| \log \delta} \int_{0}^{u} z^2 \nu(dz)
	+ 2\sigma_2|h(X_s^{H})- h(\bar X^{H,n}_s)| \1_{(0,\varepsilon]}(|Y_s^{H,n}|) \int_{u}^{\infty} z \nu(dz)\notag\\
	& \leq \frac{2\sigma_2^{2} |Y_s^{H,n}|^{\frac{2}{\alpha}} \1_{(0,\varepsilon]}(|Y_s^{H,n}|)}{|Y_s^{H,n}| \log \delta} \int_{0}^{u} z^2 \nu(dz)
	+ 2\sigma_2 |Y_s^{H,n}|^{\frac{1}{\alpha}} \1_{(0,\varepsilon]}(|Y_s^{H,n}|) \int_{u}^{\infty} z \nu(dz) \notag \\
	& \leq \frac{2\sigma_2^{2}}{\log \delta} \varepsilon^{\frac{2}{\alpha}-1} \int_{0}^{u} z^2 \nu(dz)
	+ 2\sigma_2 \varepsilon^{\frac{1}{\alpha}} \int_{u}^{\infty} z \nu(dz),\label{EP_2}
	\end{align}
where in the second last inequality, we used the fact that $h$ is $1/\alpha$-H\"older continuous.

	By applying \eqref{key_lem_122} in Lemma \ref{key_lem12}, with $u=1,
	y=Y_{s}^{H,n},
	x=\sigma_2\{h(X_{s}^{H})-h(X_{\eta(s)}^{H,n})\}$ and $x'=\sigma_2\{h(X_{s}^{H})-h(\bar X_{s}^{H,n})\}$, and the fact that $h$ is bounded, $K_t^{n,\delta,\varepsilon,2}$ can be bounded as follows
	\begin{align}
	&K_t^{n,\delta,\varepsilon,2}
	\leq |K_t^{n,\delta,\varepsilon,2}| \label{EP_7.1}\\
	&\leq 2 \int_{0}^{1} z^2 \nu(dz) \int_{0}^{t}
	 \frac{\delta}{\varepsilon \log \delta} 
		\left(
			\sigma_2^2|h(\bar X_{s}^{H,n})-h(X_{\eta(s)}^{H,n})|^2\right. \notag\\
	& \quad +\left.\sigma_2^2|h(X_{s}^{H})-h(\bar X_{s}^{H,n})| |h(\bar X_{s}^{H,n})-h(X_{\eta(s)}^{H,n})|
		\right)
		\,ds \nonumber \\
	&	\quad + 2\sigma_2 \int_{1}^{\infty} z \nu(dz) \int_{0}^{t} |h(\bar X_{s}^{H,n})-h(X_{\eta(s)}^{H,n})| ds \nonumber \\
	&\leq 2
	\left\{
		\frac{4\sigma_2^2H\delta}{\varepsilon \log \delta}
		\int_{0}^{1} z^2 \nu(dz) \,ds
		+ \sigma_2\int_{1}^{\infty} z \nu(dz)
	\right\}
	\int_{0}^{t}
	|h(\bar X_{s}^{H,n})-h(X_{\eta(s)}^{H,n})| ds \nonumber \\
	&\leq 2
	\left\{
		\left( 4\sigma_2\int_{0}^{1} z^2 \nu(dz)\right) \vee \sigma_2\int_{1}^{\infty} z \nu(dz)
	\right\}
	\left(\frac{H\delta}{\varepsilon \log \delta} + 1\right)
	\int_{0}^{t} |\bar X_{s}^{H,n}- X_{\eta(s)}^{H,n}|^{\frac{1}{\alpha}} ds.\nonumber 
	\end{align}
By writing $X^{H,n}_t -X^{H,n}_{\eta(t)}= \bar X^{H,n}_t- X^{H,n}_{\eta(t)} + R^n_t$, from Lemma \ref{l2.4} and Lemma \ref{lem2}
\begin{gather}
\mathbb{E}[|\bar X^{H,n}_t- X^{H,n}_{\eta(t)}|] \leq \mathbb{E}[|X^{H,n}_t -X^{H,n}_{\eta(t)}|] + \mathbb{E}[|R^n_t|] \leq  C_T\{n^{-\frac{1}{\alpha}} + n^{-\frac{1}{2\alpha}}\}.\label{e5}
\end{gather}
Therefore, by taking the expectation of the both hand sides of inequality \eqref{EP_7.1}, we obtain for some $C>0$,
\begin{align}
	\mathbb{E}[K_t^{n,\delta,\varepsilon,2}]
	\leq
	CT\left(1+\frac{H\delta}{\varepsilon \log \delta}\right) n^{-\frac{1}{2\alpha^2}}\label{EP_8}.
\end{align}
To estimate the term ${I}_t^{n,\delta,\varepsilon}$, we again apply Lemma \ref{lem2} to obtain
\begin{align*}
	\mathbb{E}[|{I}_t^{n,\delta,\varepsilon}|]
	& \leq C\mathbb{E}[\int_{0}^{t} |\phi_{\delta,\varepsilon}'(Y_{s-}^{H,n})||X_{s}^{H} -X_{\eta(s)}^{H,n}|ds]\\
	& \leq C\int_{0}^{t} \mathbb{E}[|X_{s}^{H} -X_{\eta(s)}^{H,n}|] ds \leq C\int_{0}^{t} \mathbb{E}[|X_{s}^{H} -X_{s}^{H,n}|] ds  + C_Tn^{-\frac{1}{\alpha}}.
\end{align*}
To estimate the term $J^{n,\delta, \epsilon}$, we write
\begin{align*}
{J}_t^{n,\delta,\varepsilon}
	\leq & \frac{\sigma_1^2}{2} \int_{0}^{t} \phi_{\delta,\varepsilon}''(Y_{s}^{H,n}) ||X_{s}^{H}|^\frac{1}{2}-|X_{\eta(s)}^{H,n}|^\frac{1}{2}|^2ds,\\
	\leq & \frac{\sigma_1^2}{2}\left\{ \int_{0}^{t} \phi_{\delta,\varepsilon}''(|Y_{s}^{H,n}|) |Y^{H,n}_{s} | ds + \frac{1}{2} \frac{2\delta }{\epsilon \log \delta} \int_{0}^{t} |\bar X^{H,n}_{s} - X_{\eta(s)}^{H,n}|ds \right\}\\
	\leq & \frac{\sigma_1^2 T}{\log\delta} + \frac{\sigma_1^2 \delta}{\epsilon \log \delta} \int_{0}^{t} |X^{H,n}_{s} - X_{\eta(s)}^{H,n}|ds  + \frac{\sigma_1^2 \delta}{\epsilon \log \delta}\int^t_0 |R_s^n| ds 
\end{align*}
which by Lemma \ref{lem2} or \eqref{e5} shows that 
\begin{align*}
\mathbb{E}[|{J}_t^{n,\delta,\varepsilon}|] 
& \leq 
C_T \sigma^2_1\left\{
	\frac{1}{\log\delta} +  \frac{\delta}{\epsilon \log \delta} n^{-\frac{1}{2\alpha}}
\right\}.
\end{align*}
Putting this together, we obtain 
\begin{align*}
\mathbb{E}[|X_t^{H} -X^{H,n}_t|] 
		& \leq
		\varepsilon
		+\mathbb{E}[|R^n_t|]
		+C_T \left\{
		\frac{\varepsilon^{\frac{2}{\alpha}-1}}{\log \delta}
		+ \varepsilon^{\frac{1}{\alpha}}
		+ \left(1+\frac{H \delta}{\varepsilon \log \delta}\right)n^{-\frac{1}{2\alpha^2}}\right.\\
		& \quad + \left.\int_{0}^{t} \mathbb{E}[|X_{s}^{H} -X_{s}^{H,n}|] ds 
		+n^{-\frac{1}{\alpha}}
		+\sigma_1\left(\frac{1}{\log\delta}
		+ \frac{\delta}{\epsilon \log \delta} n^{-\frac{1}{2\alpha}}\right)
		\right\}.
\end{align*}
Recall that $\mathbb{E}[|R_n|] \leq C_Tn^{-\frac{1}{2\alpha}}$ and we choose $\varepsilon=(\log n)^{-\alpha}$ and $\delta=n^{\frac{1}{4\alpha^2}}$. Then by Gronwall's inequality we conclude that
\begin{align*}
	\mathbb{E}[|X_t^{H} -X^{H,n}_t|] 
	\leq C\{ (\log n)^{-1}+H(\log n)^{\alpha-1}n^{-\frac{1}{4\alpha^2}}\}.
\end{align*}
In the case $\sigma_1=0$, we choose $\delta = 2$ and we have
\begin{align*}
\mathbb{E}[|X_t^{H} -X^{H,n}_t|] 
		& \leq
		\varepsilon
		+\mathbb{E}[|R^n_t|]
		+C_T \left\{
		\frac{\varepsilon^{\frac{2}{\alpha}-1}}{\log 2}
		+ \varepsilon^{\frac{1}{\alpha}}
		+ \left(1+\frac{2H}{\varepsilon \log 2}\right)n^{-\frac{1}{2\alpha^2}}\right.\\
		& \quad + \left.\int_{0}^{t} \mathbb{E}[|X_{s}^{H} -X_{s}^{H,n}|] ds 
		+ n^{-\frac{1}{\alpha}}
		\right\}.
\end{align*}
Then by using Gronwall's inequality, the fact that $0<\frac{2}{\alpha} - 1  < \frac{1}{\alpha}< 1$ and choosing $\epsilon = n^{-p}$ where $p = \frac{1}{4\alpha}$, we obtain
\begin{align*}
	\mathbb{E}[|X_t^{H} -X^{H,n}_t|] 
	& \leq C\left\{
		\varepsilon^{\frac{2}{\alpha}-1}
		+ \left(1+\frac{2H}{\varepsilon \log 2}\right){n^{-\frac{1}{2\alpha^2}}}
		\right\} \\
		& \leq CHn^{-(\frac{2}{\alpha}-1)\frac{1}{4\alpha}}.
\end{align*}

It is clear that the upper bound is independent of the localizing sequence of stopping times, the final result then holds by Fatou's Lemma.
\end{proof}

\vskip10pt
\begin{remark}
The condition that the jump coefficient $h$ is bounded is only used in the estimation of $K^{n,\delta, \epsilon,2}$ in equation \eqref{EP_8}. The boundedness condition is used to make up for the lack of knowledge on the integrability of $X^{H,n}$ for $H \in \mathbb{R}_+\cup \{\infty\}$. We claim that if one can show for $\beta\in (1,\alpha)$, that $\sup_{n}\mathbb{E}[|X^{H,n}_t|^\beta]<\infty$ for $H = \infty$ then the strong rate of convergence can be obtained for certain range of values of $\alpha$ without first having to truncate the jump coefficient. However, it appears that uniform bound on the $\beta$-th moment of the approximation process $X^{H,n}$ is difficult to obtain, however for the theoretical process $X^{H}$ and $X$, one can show that for all $\beta \in [1,\alpha)$ we have $\mathbb{E}[|X_t^{H}|^\beta]<\infty$ and $\mathbb{E}[|X_t|^\beta]<\infty$. This result is given in Lemma \ref{alpha_norm_0} and is needed in the next subsection to study the convergence of the truncated alpha-CIR $X^{H}$ towards the alpha-CIR process $X$ as $H\uparrow \infty$.
\end{remark}

\vfill\break
\subsection{Convergence of the truncated alpha-CIR to the alpha-CIR}
%

\begin{Thm}\label{main_2}
	Assume that $\alpha \in (\sqrt{2},2)$, $\sigma_1 > 0$ and $\sigma_2 >0$ then there exists $C>0$ such that for any $H>1$,
	\begin{align*}
		\sup_{t\leq T}
		\e[|X_t-X_t^H|]
		\leq C(\log H)^{-1}.
	\end{align*}
Assume that $\alpha \in (\sqrt{2},2)$, $\sigma_1 = 0$ and $\sigma_2 >0$ then for any $p \in (0,\alpha^2-2)$, there exists $C_{T,p}>0$ such that for any $H>1$,	
	\begin{align*}
		\sup_{t\leq T}
		\e[|X_t-X_t^H|]
		\leq
		C_{T,p}
		H^{-p(1-\alpha/2)},
	\end{align*}
where $C_{T,p}$ converges to infinity as $p \uparrow (\alpha^2 - 2)$.
\end{Thm}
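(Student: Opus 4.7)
The plan is to mirror the Yamada-Watanabe argument of Theorem \ref{main_1}, now applied to $Y^H := X - X^H$, which satisfies
\begin{align*}
dY^H_t = -kY^H_t\, dt + \sigma_1\bigl(|X_t|^{1/2} - |X_t^H|^{1/2}\bigr) dW_t + \sigma_2 \bigl(|X_{t-}|^{1/\alpha} - h(X_{t-}^H)\bigr) dZ_t,
\end{align*}
with $Y^H_0 = 0$ and $h(x) = \min(|x|^{1/\alpha}, H)$. The crucial observation is the decomposition
\begin{align*}
|X_{t-}|^{1/\alpha} - h(X_{t-}^H) = \bigl(|X_{t-}|^{1/\alpha} - |X_{t-}^H|^{1/\alpha}\bigr) + \bigl(|X_{t-}^H|^{1/\alpha} - H\bigr)_+,
\end{align*}
in which the first summand vanishes on $\{Y^H = 0\}$ (and so is amenable to Yamada-Watanabe), while the second is a pure truncation error that does not involve $Y^H$ at all.

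After applying It\^o's formula to $\phi_{\delta,\varepsilon}(Y^H)$, localizing, and taking expectations, I would split the compensated jump contribution as $K = K^1 + K^2$ along this decomposition. For $K^1$, Lemma \ref{key_lem_0} applied with $u=1$ (exactly as in the proof of Theorem \ref{main_1}) yields $\mathbb{E}[K^1_t] \leq C_T(\varepsilon^{2/\alpha-1}/\log\delta + \varepsilon^{1/\alpha})$. For $K^2$, Lemma \ref{key_lem12} with perturbation $x-x' = \sigma_2(|X^H|^{1/\alpha}-H)_+$ and $|x'| \leq \sigma_2(|X|^{1/\alpha}+|X^H|^{1/\alpha})$ yields
\begin{align*}
\mathbb{E}[K^2_t] \leq \frac{C\delta}{\varepsilon \log \delta} \int_0^t \Bigl(\mathbb{E}[(|X_s^H|^{1/\alpha} - H)_+^2] + \mathbb{E}[(|X_s^H|^{1/\alpha} - H)_+ |x'_s|]\Bigr) ds + C\int_0^t \mathbb{E}[(|X_s^H|^{1/\alpha}-H)_+]\,ds.
\end{align*}
The diffusion term is bounded by $C\sigma_1^2 T/\log\delta$ using $||x|^{1/2}-|y|^{1/2}|^2 \leq |x-y|$ together with the pointwise bound $\phi''(y)|y|\leq 2/\log\delta$, while the linear drift contribution is absorbed into the Gronwall step.

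The truncation estimates rest on Lemma \ref{alpha_norm_0}, which provides $\sup_{t\leq T}\{\mathbb{E}[|X_t|^\beta]+\mathbb{E}[|X_t^H|^\beta]\} <\infty$ for every $\beta\in[1,\alpha)$ (with a constant that blows up as $\beta\uparrow\alpha$). For $q\geq 0$ and $n\in\{1,2\}$, the elementary bound
\begin{align*}
(|X_s^H|^{1/\alpha}-H)_+^n \leq H^{-q}\,|X_s^H|^{(n+q)/\alpha}
\end{align*}
is integrable precisely when $(n+q)/\alpha < \alpha$. For $n=2$ this forces $q < \alpha^2 - 2$, which is the source of the hypothesis $\alpha > \sqrt{2}$ and of the blow-up of $C_{T,p}$ as $p\uparrow \alpha^2-2$. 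The cross term $(|X^H|^{1/\alpha}-H)_+ |x'|$ is controlled by a direct product estimate (rather than Cauchy-Schwarz), exploiting the $L^\beta$-bound on both $X$ and $X^H$ to preserve the same exponent $q<\alpha^2-2$.

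Collecting everything and applying Gronwall's inequality gives
\begin{align*}
\sup_{t\leq T}\mathbb{E}[|Y_t^H|] \leq C\Bigl(\varepsilon + \frac{\varepsilon^{2/\alpha-1}}{\log\delta} + \varepsilon^{1/\alpha} + \frac{\delta H^{-p}}{\varepsilon\log\delta} + \frac{\sigma_1^2}{\log\delta}\Bigr),
\end{align*}
and the proof concludes by optimizing in $(\varepsilon,\delta)$. For $\sigma_1 > 0$ the residual $1/\log\delta$ is the dominant term; the choices $\delta = H^{p/2}$ and $\varepsilon = (\log H)^{-\alpha}$ force every other term to be dominated and yield the $C(\log H)^{-1}$ bound. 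For $\sigma_1=0$ this logarithmic floor disappears, so we may set $\delta = 2$ and balance $\varepsilon^{2/\alpha-1}$ against $H^{-p}/\varepsilon$ by taking $\varepsilon = H^{-p\alpha/2}$, producing the polynomial rate $C_{T,p}\,H^{-p(1-\alpha/2)}$. The main technical obstacle, as signalled above, is ensuring the mixed product estimate for the cross term decays at the same rate as the quadratic truncation error; this is what pins down $p<\alpha^2-2$ and forces the restriction $\alpha\in(\sqrt{2},2)$.
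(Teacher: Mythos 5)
Your proposal follows essentially the same route as the paper's proof: the same splitting of the jump term into a Yamada--Watanabe part (handled by Lemma \ref{key_lem_0}) and a pure truncation error $\sigma_2(|X^H|^{1/\alpha}-H)_+$ (handled by Lemma \ref{key_lem12}), the same moment bounds from Lemma \ref{alpha_norm_0} forcing $p<\alpha^2-2$ and hence $\alpha\in(\sqrt 2,2)$, and the same choices of $(\varepsilon,\delta)$ up to inessential differences (your asymmetric product estimate for the cross term is exactly the paper's H\"older step with exponents $p+2$ and $(p+2)/(p+1)$, and your $\varepsilon=(\log H)^{-\alpha}$, $\delta=H^{p/2}$ matches the paper's intent). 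The argument is correct as proposed.
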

\begin{proof}
	Let $\varepsilon \in (0,1)$ and $\delta \in (1,\infty)$.
	We define $Y^{H}:=X-X^{H}$ and denote the jumps of $Y^{H}$ by $\Delta Y_s^{H}(z) := \sigma_2 \{|X_{s}|^{\frac{1}{\alpha}}-h(X_{s}^{H})\}z$.
	By a similar arguments of the proof of Theorem \ref{main_1}, we obtain
	\begin{align*}
	&\e[|X_t-X_t^H|]
	\leq \varepsilon
	+\e[\phi_{\delta,\varepsilon}({Y}_t^{H})]
	=\varepsilon
	+\e[{I}_t^{\delta,\varepsilon}]
	+\e[{J}_t^{\delta,\varepsilon}]
	+\e[{K}_t^{\delta,\varepsilon}],
	\end{align*}
	where we have set
	\begin{align*}
	{I}_t^{\delta,\varepsilon}
	:=&\int_{0}^{t} \phi_{\delta,\varepsilon}'(Y_{s-}^{H}) (-kX_{s}+kX_{s}^{H}) ds,
	\quad
	{J}_t^{\delta,\varepsilon}
	:=\frac{\sigma_1^2}{2} \int_{0}^{t} \phi_{\delta,\varepsilon}''(Y_{s}^{H}) ||X_{s}|^{\frac{1}{2}}-|X_{s}^{H}|^{\frac{1}{2}}|^2ds,\\
	{K}_t^{\delta,\varepsilon}
	:=&
	\int_{0}^{t} \int_{0}^{\infty}
	\Big\{
	\phi_{\delta,\varepsilon}(Y_{s}^{H}+\Delta Y_s^{H}(z))-\phi_{\delta,\varepsilon}(Y_{s}^{H})-\Delta Y_s^{H}(z) \phi_{\delta,\varepsilon}'(Y_{s}^{H})
	\Big\}
	\nu(dz)ds.
	\end{align*}
	
	To estimate ${K}_t^{\delta,\varepsilon}$, we write it into two terms ${K}_t^{\delta,\varepsilon}={K}_t^{\delta,\varepsilon,1}+{K}_t^{\delta,\varepsilon,2}$ where
	\begin{align*}
		{K}_t^{\delta,\varepsilon,1}
		:=&\int_{0}^{t} \int_{0}^{\infty}
		\Big\{
		\phi_{\delta,\varepsilon}(Y_{s}^{H}+\sigma_2\{|X_s|^{\frac{1}{\alpha}}-|X_s^{H}|^{\frac{1}{\alpha}}\}z)-\phi_{\delta,\varepsilon}(Y_{s}^{H})\\&-\sigma_2\{X_s-X_s^{H}\}z \phi_{\delta,\varepsilon}'(Y_{s}^{H})
		\Big\}
		\nu(dz)ds\\
		{K}_t^{\delta,\varepsilon,2}
		:=&\int_{0}^{t} \int_{0}^{\infty}
		\Big\{
		\phi_{\delta,\varepsilon}(Y_{s}^{H}+\sigma_2\{|X_s|^{\frac{1}{\alpha}}-h(X_s^{H})\}z)
		-\phi_{\delta,\varepsilon}(Y_{s}^{H}+\sigma_2\{|X_s|^{\frac{1}{\alpha}}-|X_s^{H}|^{\frac{1}{\alpha}}\}z)\\&
		-\sigma_2\{X_s^{H}-h(X_s^{H})\} z \phi_{\delta,\varepsilon}'(Y_{s}^{H})
		\Big\}
		\nu(dz)ds.
	\end{align*}
	To estimate ${K}_t^{\delta,\varepsilon,1}$, apply Lemma \ref{key_lem_0} with $y=Y_{s}^{H}$, $x=\sigma_2 \{|X_s|^{\frac{1}{\alpha}}-h(X_s^{H})\}$ and $u=1$, 
	\begin{align}\label{EP_2_1}
		&\int_{0}^{\infty}
		\Big\{
		\phi_{\delta,\varepsilon}(Y_{s}^{H}+\sigma_2\{|X_s|^{\frac{1}{\alpha}}-|X_s^{H}|^{\frac{1}{\alpha}}\}z)-\phi_{\delta,\varepsilon}(Y_{s}^{H})-\sigma_2\{X_s-X_s^{H}\}z \phi_{\delta,\varepsilon}'(Y_{s}^{H})
		\Big\}
		\nu(dz) \notag\\
		&\leq
		\frac{2 \sigma_2^2 ||X_s|^{\frac{1}{\alpha}}-|X_s^{H}|^{\frac{1}{\alpha}}|^{2} \1_{(0,\varepsilon]}(|Y_{s}^{H}|) }{|Y_{s}^{H}| \log \delta}
		\int_{0}^{1}
			z^2
		\nu (dz)
		+2 \sigma_2 ||X_s|^{\frac{1}{\alpha}}-|X_s^{H}|^{\frac{1}{\alpha}}| \1_{(0,\varepsilon]}(|Y_{s}^{H}|) 
		\int_{1}^{\infty}
			z
		\nu (dz) \notag\\
		&\leq
		\frac{2 \sigma_2^2 \varepsilon^{\frac{2}{\alpha}-1} }{\log \delta}
		\int_{0}^{1}
		z^2
		\nu (dz)
		+2 \sigma_2 \varepsilon^{\frac{1}{\alpha}}
		\int_{1}^{\infty}
		z
		\nu (dz).
	\end{align}
	
	By applying \eqref{key_lem_122} in Lemma \ref{key_lem12}, with $u=1,
	y=Y_{s}^{H},
	x=\sigma_2\{|X_{s}|^{\frac{1}{\alpha}}-h(X_{s}^{H})\}$ and $x'=\sigma_2\{|X_{s}|^{\frac{1}{\alpha}}-|X_{s}^{H}|^{\frac{1}{\alpha}}\}$, $K_t^{\delta,\varepsilon,2}$ can be bounded as follows
	\begin{align*}
	K_t^{\delta,\varepsilon,2}
	& \leq |K_t^{\delta,\varepsilon,2}| \notag\\
	&\leq 2 \int_{0}^{1} z^2 \nu(dz) \int_{0}^{t}
	\frac{\delta}{\varepsilon \log \delta} 
	\left\{
	\sigma_2^2||X_{s}^{H}|^{\frac{1}{\alpha}}-h(X_{s}^{H})|^2 \right. \notag\\
	& \quad + 
	\left.\,\sigma_2^2
	||X_{s}|^{\frac{1}{\alpha}}-|X_{s}^{H}|^{\frac{1}{\alpha}}|\cdot
	||X_{s}^{H}|^{\frac{1}{\alpha}}-h(X_{s}^{H})|
	\right\}
	\,ds \\
	& \quad + 2\sigma_2 \int_{1}^{\infty} z \nu(dz) \int_{0}^{t} ||X_{s}^{H}|^{\frac{1}{\alpha}}-h(X_{s}^{H})|^2 ds. \nonumber
	\end{align*}
	The assumption $\alpha \in (\sqrt{2},2)$ implies for any $p \in (0,\alpha^2-2)$ we have $\frac{p+2}{\alpha}<\alpha$.
	From Lemma \ref{alpha_norm_0}, $X_t$ and $X_t^H$ has $\frac{p+2}{\alpha}$-th moment, thus we have
	\begin{align*}
		&\e[||X_{s}^{H}|^{\frac{1}{\alpha}}-h(X_{s}^{H})|^2]
		+\e[||X_{s}|^{\frac{1}{\alpha}}-|X_{s}^{H}|^{\frac{1}{\alpha}}|\cdot
		||X_{s}^{H}|^{\frac{1}{\alpha}}-h(X_{s}^{H})|]\\
		&=\e[||X_{s}^{H}|^{\frac{1}{\alpha}}-H|^2 \1_{\{|X_s^{H}|^{\frac{1}{\alpha}}\geq H\}}]
		+\e[||X_{s}|^{\frac{1}{\alpha}}-|X_{s}^{H}|^{\frac{1}{\alpha}}|\cdot
		||X_{s}^{H}|^{\frac{1}{\alpha}}-H|\1_{\{|X_s^{H}|^\frac{1}{\alpha}\geq H\}}]\\
		&\leq
		\e[|X_{s}^{H}|^{\frac{2}{\alpha}} \1_{\{|X_s^{H}|^{\frac{1}{\alpha}}\geq H\}}]
		+\e[(||X_{s}|^{\frac{1}{\alpha}}|X_{s}^{H}|^{\frac{1}{\alpha}}+|X_{s}^{H}|^{\frac{2}{\alpha}}|) \1_{\{|X_s^{H}|^{\frac{1}{\alpha}}\geq H\}}]
		\\
		&\leq
			2H^{-p}\e[|X_{s}^{H}|^{\frac{p+2}{\alpha}}]
			+
			H^{-p}\e[|X_{s}|^{\frac{1}{\alpha}} |X_{s}^{H}|^{\frac{p+1}{\alpha}}]
		\\
		& \leq
			2H^{-p}\e[|X_{s}^{H}|^{\frac{p+2}{\alpha}}]
			+
			H^{-p}
			\e[|X_{s}|^{\frac{p+2}{\alpha}}]^{\frac{1}{p+2}}
			\e[ |X_{s}^{H}|^{\frac{p+2}{\alpha}}]^{\frac{p+1}{p+2}}
		\\
		&\leq
		C_{T,p}{H^{-p}},
	\end{align*}
	for some $C_{T,p}$ which explodes as $p\uparrow (\alpha^2-2)$ (see \eqref{l4.1const} in Lemma \ref{alpha_norm_0}).
	Therefore, $K_t^{\delta,\varepsilon,2}$ is bounded by
	\begin{align}
	&2
	\left\{
	\left( 2\sigma_2\int_{0}^{1} z^2 \nu(dz)\right) \vee \sigma_2\int_{1}^{\infty} z \nu(dz)
	\right\}
	\left(\frac{\delta}{\varepsilon \log \delta} + 1\right)
		C_{T,p}{H^{-p}}.
	\label{EP_8_1}
	\end{align}
		
	To estimate ${I}_t^{\delta,\varepsilon}$ and ${J}_t^{\delta,\varepsilon}$, we write
	\begin{align*}
	{I}_t^{\delta,\varepsilon}
	\leq k\int_{0}^{t} |X_s-X_s^{H}| ds
	\quad
	\text{and}
	\quad
	{J}_t^{\delta,\varepsilon}
	\leq & \frac{\sigma_1^2}{2} \int_{0}^{t} \phi_{\delta,\varepsilon}''(Y_{s}^{H}) ||Y_{s}^{H}|ds
	\leq \frac{\sigma_1^2}{\log \delta}.
	\end{align*}
		Putting this together, we obtain 
	\begin{align*}
	\e[|X_t-X_t^H|]
	& \leq
	\varepsilon
	+C_{T,p} \Big\{
	\frac{\varepsilon^{\frac{2}{\alpha}-1}}{\log \delta}
	+\varepsilon^{\frac{1}{\alpha}}
	+ \left(1+\frac{\delta}{\varepsilon \log \delta}\right)
		H^{-p}
	\\
	& \quad + \int_{0}^{t} \mathbb{E}[|X_{s} -X_{s}^{H}|] ds
	+\frac{\sigma^2_1}{\log\delta}
	\Big\}.
	\end{align*}
	By choosing $\varepsilon=(\log H)^{-1}$ and $\delta=H^{-p/2}$,
	and using Gronwall's inequality,
	\begin{align*}
		\sup_{t\leq T}
		\e[|X_t-X_t^H|]
		\leq C_{T,p}(\log H)^{-1}.
	\end{align*}
	Similarly, for $\alpha \in (\sqrt{2},2)$, $\sigma_1 = 0$, we choose $\delta = 2$, $\epsilon = H^{-q}$ where $q =\alpha p/2$,
	\begin{align*}
	\e[|X_t-X_t^H|]
	& \leq C_{T,p}\Big\{
	\varepsilon^{\frac{2}{\alpha}-1}
	+ \left(1+\frac{2}{\varepsilon \log 2}\right)
		H^{-p}
	\Big\}\\
	& \leq
	C_{T,p}
		H^{-p(1-\alpha/2)},
	\end{align*}
	where again the constant $C_{T,p}$ explodes as $p \uparrow (\alpha^2-2)$.
	It is clear that the upper bound is independent of the localizing sequence of stopping times, the final result then holds by Fatou's Lemma. 
\end{proof}
%

\begin{Cor}\label{cor2.9}
	Assume that $\alpha \in (\sqrt{2},2)$, $\sigma_1 > 0$ and $\sigma_2>0$. We let $r=\frac{1}{8\alpha^2}$, then there exists $C>0$ such that 
	\begin{align*}
		\sup_{t\leq T}
		\e[|X_t-X_t^{n^{r},n}|]
		\leq C(\log n)^{-1}.
	\end{align*}
	Assume that $\alpha \in (\sqrt{2},2)$, $\sigma_1 = 0$ and $\sigma_2 > 0$ then
		for any $p \in (0,\alpha^2-2)$, then there exists $C_{T,p}>0$ such that 
	\begin{align*}
		\sup_{t\leq T}
		\e[|X_t-X_t^{{n^\ell},n}|]
		\leq
		C_{T,p}\, n^{-L}
	\end{align*}
	where
	$\ell = \frac{(\frac{2}{\alpha}-1)\frac{1}{4\alpha}}{1+p(1-\frac{\alpha}{2})}$ and $L = \left(\frac{2}{\alpha} - 1\right)\frac{1}{4\alpha} - \ell$, and the constant $C_{T,p}$ convergences to infinity as $p\uparrow (\alpha^2-2)$.
\end{Cor}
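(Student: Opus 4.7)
The plan is to combine Theorem \ref{main_1} and Theorem \ref{main_2} via the triangle inequality
\begin{align*}
\sup_{t\leq T} \mathbb{E}[|X_t - X_t^{H,n}|]
\leq \sup_{t\leq T}\mathbb{E}[|X_t - X_t^{H}|]
+ \sup_{t\leq T}\mathbb{E}[|X_t^{H} - X_t^{H,n}|],
\end{align*}
and then to choose the truncation level $H$ as a power of the grid parameter $n$ in such a way that the two competing bounds are balanced. There is essentially no new analytic work to do: all estimates are already available. The only task is a calibration calculation.

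Consider first the case $\sigma_1>0$. Theorem \ref{main_1} gives the bound $C\{(\log n)^{-1}+H(\log n)^{\alpha-1}n^{-1/(4\alpha^2)}\}$ for the discretisation error, while Theorem \ref{main_2} gives the bound $C(\log H)^{-1}$ for the truncation error. Setting $H=n^{r}$ with $r=1/(8\alpha^2)$, the truncation error becomes $(r\log n)^{-1}$, which is of order $(\log n)^{-1}$, and the non-trivial part of the discretisation error becomes $(\log n)^{\alpha-1} n^{r-1/(4\alpha^2)} = (\log n)^{\alpha-1} n^{-1/(8\alpha^2)}$, which decays polynomially and therefore faster than $(\log n)^{-1}$. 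Hence the logarithmic term dominates and yields the stated rate.

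Consider next the case $\sigma_1=0$. Theorem \ref{main_1} gives $CHn^{-(2/\alpha-1)/(4\alpha)}$ and Theorem \ref{main_2} gives $C_{T,p}H^{-p(1-\alpha/2)}$ for any $p\in(0,\alpha^2-2)$. Writing $H=n^{\ell}$, the two terms become $n^{\ell-(2/\alpha-1)/(4\alpha)}$ and $n^{-\ell p(1-\alpha/2)}$ respectively. These are of equal order precisely when
\begin{align*}
\ell(1+p(1-\alpha/2)) = \left(\tfrac{2}{\alpha}-1\right)\tfrac{1}{4\alpha},
\end{align*}
which gives the stated value of $\ell$, and then the common exponent is $L=(2/\alpha-1)/(4\alpha)-\ell$, as claimed. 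Propagation of the constant $C_{T,p}$ (including its blow-up as $p\uparrow\alpha^2-2$) follows directly from Theorem \ref{main_2}.

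There is no genuine obstacle here: the only subtlety is to verify that the $r$ and $\ell$ in the statement are indeed the balancing choices, and to check that the polynomial factors of $\log n$ appearing in Theorem \ref{main_1} are absorbed by the polynomial decay coming from the balance. Both checks are immediate algebraic verifications, and the result follows.
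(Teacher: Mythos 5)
Your proposal is correct and follows essentially the same route as the paper: both apply the triangle inequality to combine Theorem \ref{main_1} and Theorem \ref{main_2}, choose $H=n^{r}$ with $r=\tfrac{1}{8\alpha^2}$ in the case $\sigma_1>0$ so that the polynomial term $(\log n)^{\alpha-1}n^{-1/(8\alpha^2)}$ is dominated by $(\log n)^{-1}$, and balance the exponents $\ell-(\tfrac{2}{\alpha}-1)\tfrac{1}{4\alpha}$ and $-\ell p(1-\alpha/2)$ in the case $\sigma_1=0$ to obtain the stated $\ell$ and $L$. No gaps.
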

\begin{proof}
	By applying Theorem \ref{main_1} and Theorem \ref{main_2} with $H=n^{r}$,
	\begin{align*}
	\sup_{t\leq T}
		\e[|X_t-X_t^{n^r,n}|]
		\leq C(r \log n)^{-1}
		+\{ (\log n)^{-1}+n^r(\log n)^{\alpha-1}n^{-\frac{1}{4\alpha^2}}\}
		\leq C(\log n)^{-1}.
	\end{align*}
In the case $\sigma_1=0$, we choose $H=n^\ell$ where $\ell = \frac{(\frac{2}{\alpha}-1)\frac{1}{4\alpha}}{1+p(1-\frac{\alpha}{2})}$, then
	\begin{align*}
	\sup_{t\leq T}
		\e[|X_t-X_t^{n^{\ell},n}|]
	& \leq CH n^{-(\frac{2}{\alpha}-1)\frac{1}{4\alpha}}
	+C_{T,p}H^{-p(1-\alpha/2)}
	\leq C_{T,p}\, n^{-L}
	\end{align*}
where $L = \left(\frac{2}{\alpha} - 1\right)\frac{1}{4\alpha} - \ell$
\end{proof}

\begin{remark}
It is slightly unpleasant to have restriction on the $\alpha$ parameter and, in the case where $\sigma_1= 0$, to present convergence rates which depend on the parameter $p\in (0,\alpha^2-2)$. However we want to point out that these are again technical issues. The restriction on $\alpha$ and the dependence of $p$ are due to estimate \eqref{EP_8_1}, where one is forced to use moment estimates in Lemma \ref{alpha_norm_0} as one has little information on the distribution of $X_t^H$ or $X_t$.
\end{remark}

\section{Numerical experiments}
Numerical experiments were performed to study the strong rate of convergence of the implicit approximation scheme $X^{H,n}$ give in \eqref{scheme}. Note that here we ignored the truncation constant $H$, since one is bounded, by design, by the largest number possible on a given computer architecture.  In the following, the terminal time is $T= 1$ and to illustrate the rate of convergence we compute
\begin{align*}
\mathbb{E}[|X_T^{H,2n} - X^{H,n}_T|] \approx \frac{1}{N} \sum_{i=1}^N |X_T^{H,2n} - X^{H,n}_T|
\end{align*}
using $N = 2^{20}$.  The implicit scheme $X^{H,n}$ is simulated using $n = 2^j$, for $j = 7,8,9,10$. The log-log plots of the error against grid size are given for some parameter values. 
\begin{figure}[h!]
  \begin{tabular}{@{}ccc@{}}
\includegraphics[scale = 0.4]{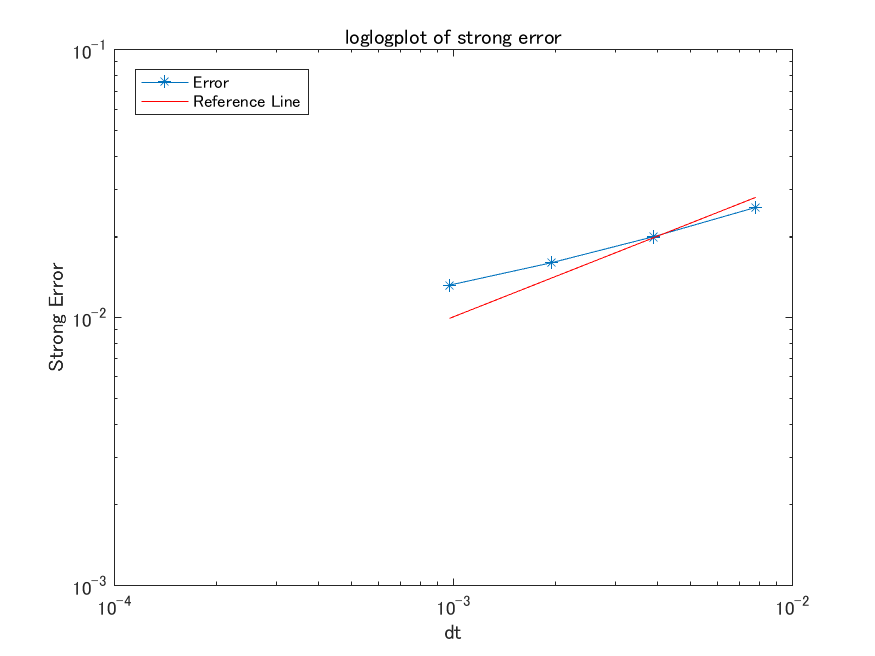}
\includegraphics[scale = 0.4]{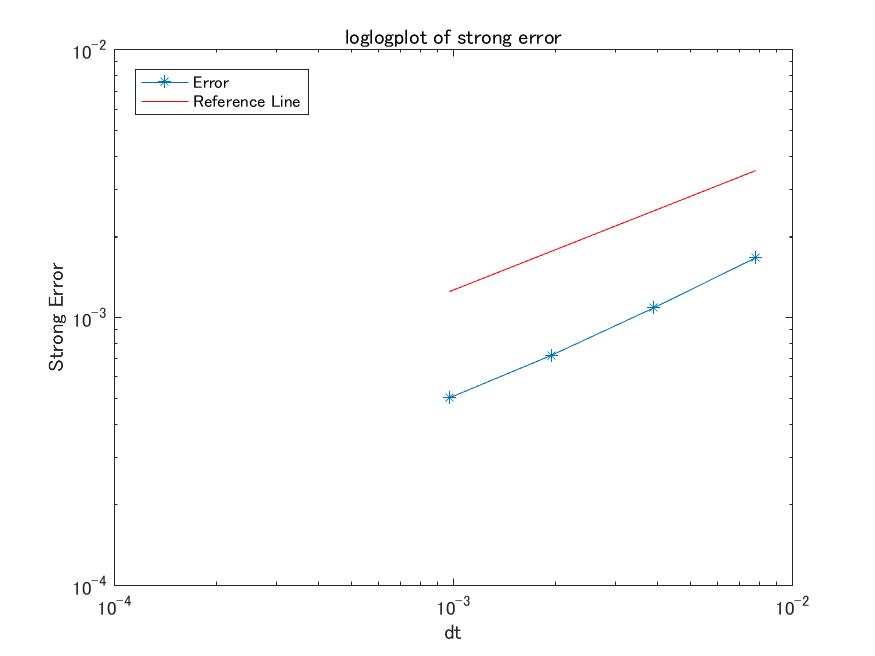}
  \end{tabular}
\caption{log-log plots illustrating the convergence of the scheme. The red line is the reference line with slop $1/2$. The graph on the left is generated with parameters $\sigma_1= 0.4$,   
$\sigma_2= 0.5$, $\alpha = 1.9$, $a = 1.03$, $k = 4$, $x_0 = 0.03$. The graph on the right is generated with parameters $\sigma_1= 0.4$,   
$\sigma_2= 0.5$, $\alpha = 1.005$, $a = 1.03$, $k = 4$, $x_0 = 0.03$.}
\end{figure}

We observe in the above that the rate of convergence appears lower for larger values of $\alpha$. This observation is intuitive, since the rate of convergence for the diffusion CIR, i.e. $\sigma_1 > 0$, $\sigma_2 =0$, is of order $1/2$, and on the other hand, Theorem \ref{main_1} tells us that in the jump case, i.e. $\sigma_1 = 0$, $\sigma_2 > 0$, the rate of convergence decreases as $\alpha \uparrow 2$. Therefore, when combined, the worst rate should prevail. Unfortunately, due to the limitation in techniques, we are unable to identify this theoretically when $\sigma_1>0$ and $\sigma_2>0$, and this will be the focus of future works. For experimental purposes, in the following pages, we present some graphs on the strong rate of convergence for different parameter regimes. 

\section*{Conclusion}
In this work, we devised a positivity preserving numerical scheme for the alpha-CIR process. We show that the scheme convergences and numerical experiments indicated that the strong rate is likely to be of order $1/2$ when $\alpha$ is small and decreases as $\alpha \uparrow 2$. 


\vfill\break 
\subsection*{Numerical experiments - Graphs on the rate of convergence}
\begin{figure}[htb]	
\centering
\vskip10pt
 \textbf{The effect of increasing $\sigma_2$}\par\medskip
  \begin{tabular}{@{}ccc@{}}
    \includegraphics[width=.47\textwidth]{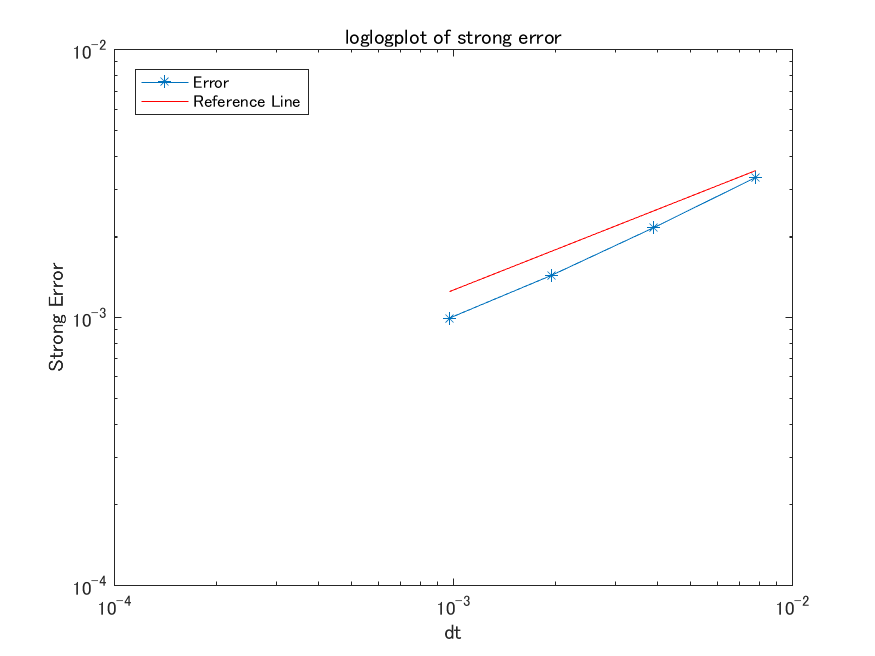} &
    \includegraphics[width=.47\textwidth]{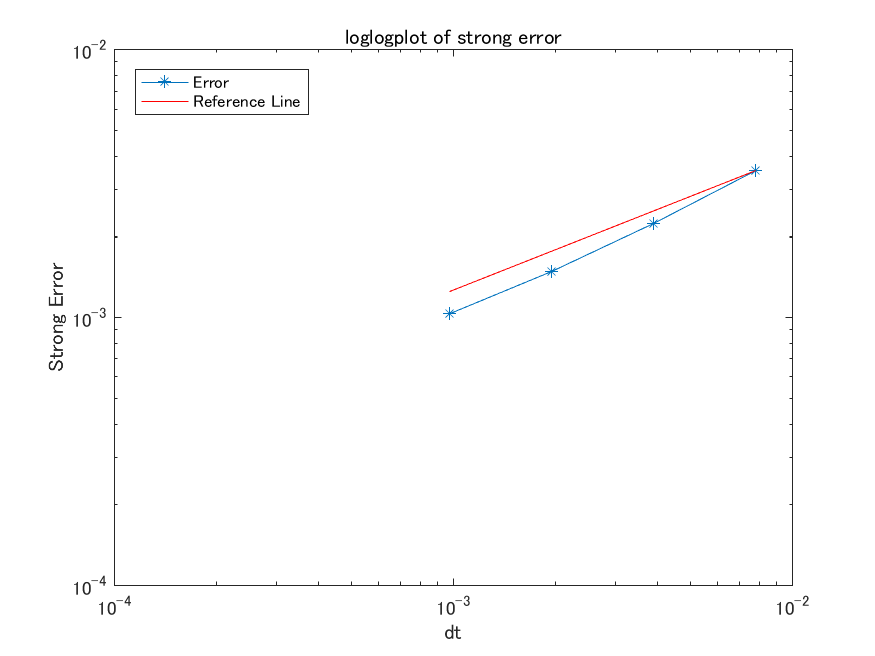} \\
    \includegraphics[width=.47\textwidth]{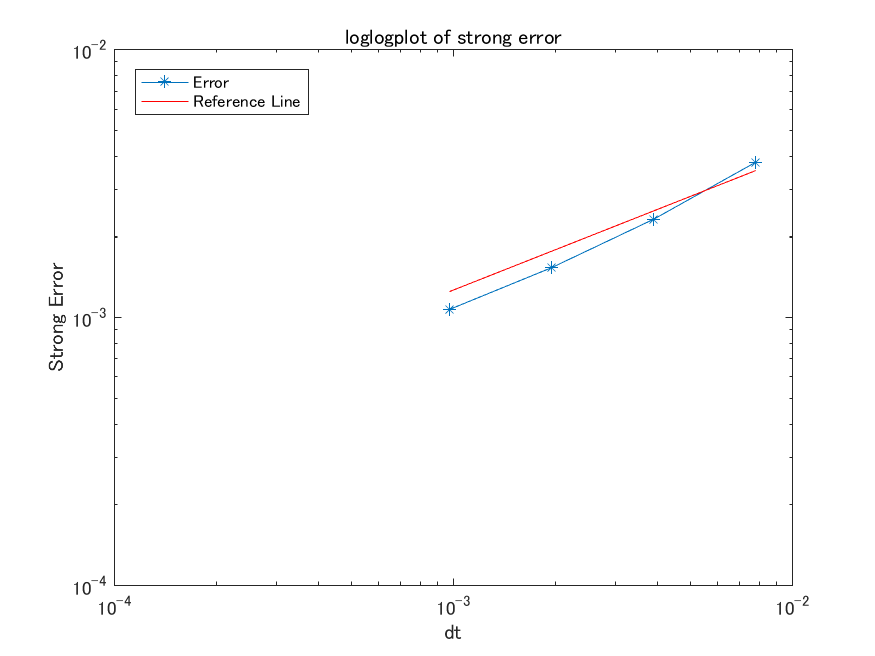} &
    \includegraphics[width=.47\textwidth]{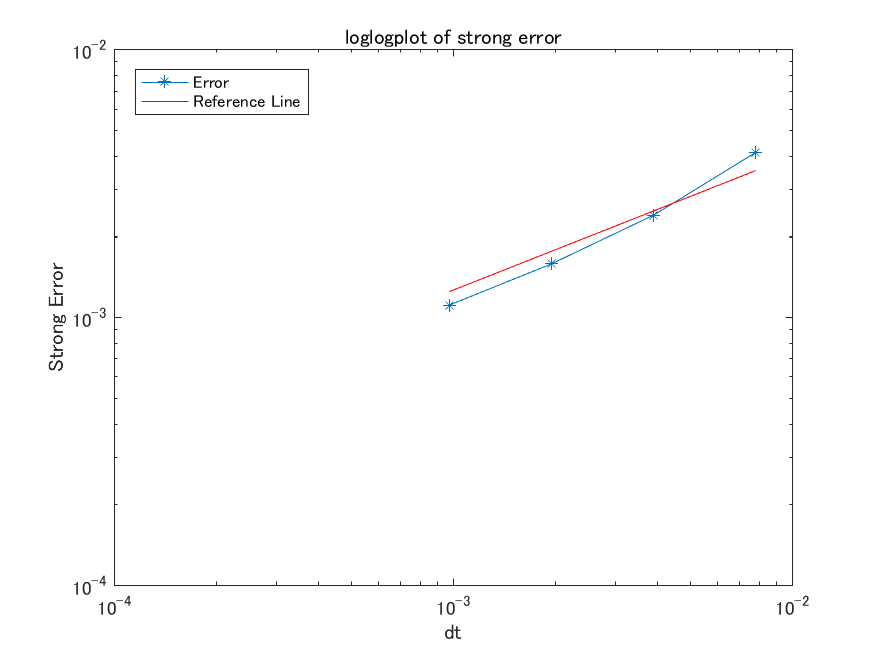} \\
    \includegraphics[width=.47\textwidth]{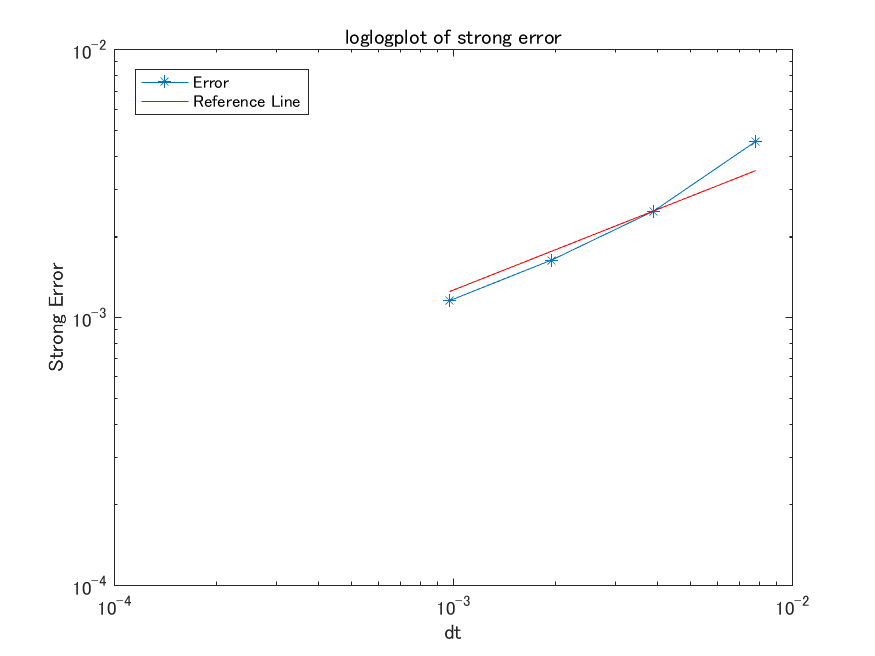} &
    \includegraphics[width=.47\textwidth]{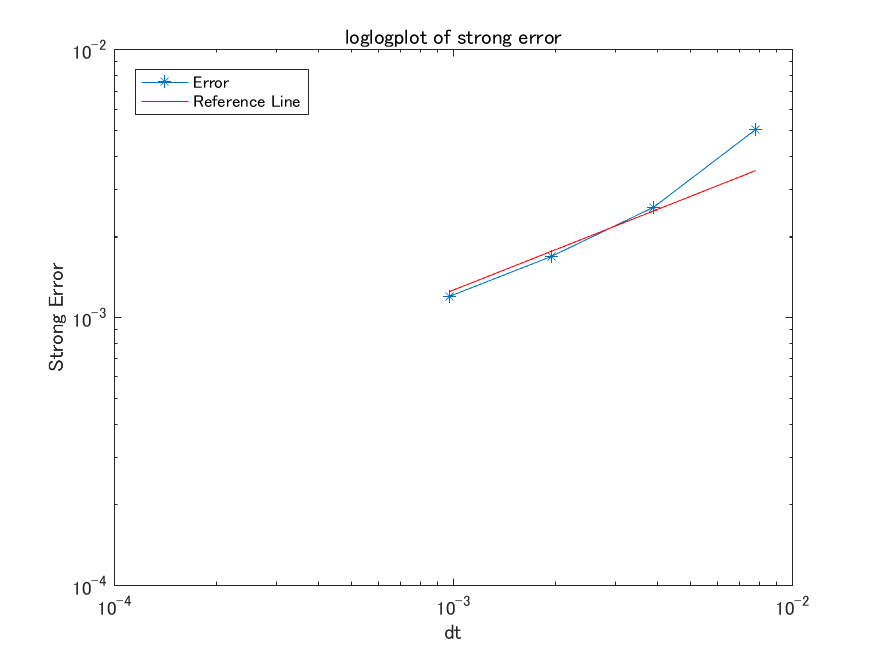}  
  \end{tabular}
\caption{log-log plots illustrating the rate of convergence of the implicit scheme. The red line is a reference line with slop $1/2$. The parameters are $\sigma_1= 0.5$,   
$\sigma_2= 0.05+0.05\times j$, for $j = 1,2,\dots, 6$, $\alpha = 1.05$, $a = 2$, $k = 3$, $x_0 = 0.03$. The graphs are to be read from left to right and top to bottom in increasing order of $j$. That is, the top left graph has $j = 1$ and the bottom right graph has $j = 6$.}
\end{figure}

\begin{Rem}
In the above, we investigate the effect of increases in $\sigma_2$. We observe that a increase in $\sigma_2$ increases the magnitude of the log-error. The numerical experiments suggests that the strong rate is $1/2$ when $\sigma_2$ and $\alpha$ are relatively small.
\end{Rem}

\vfill\break 

\begin{figure}[htb]
\centering
\vskip30pt
 \textbf{The effect of increasing $\alpha$ when $\sigma_2$ is small}\par\medskip
  \begin{tabular}{@{}ccc@{}}
    \includegraphics[width=.47\textwidth]{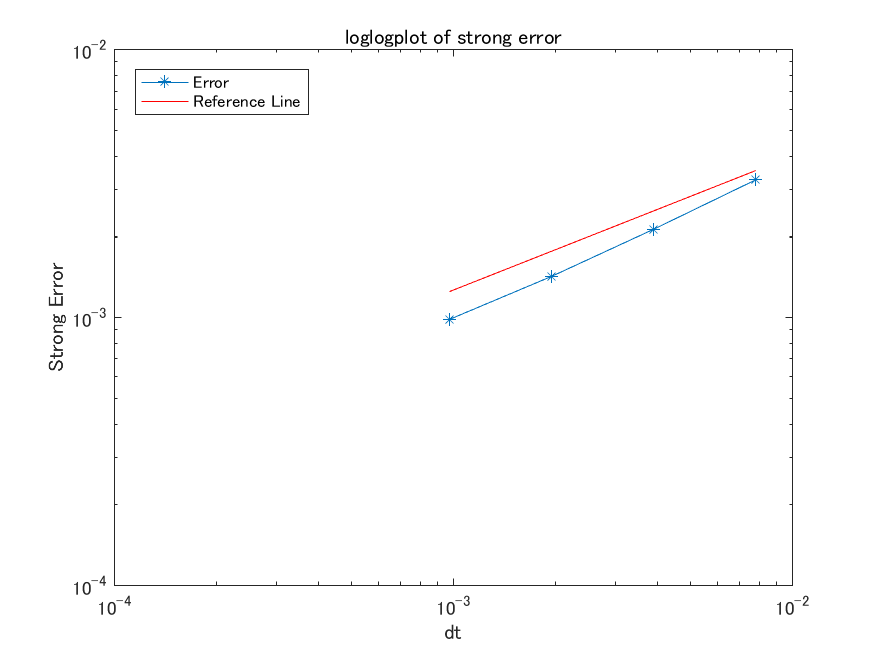} &
    \includegraphics[width=.47\textwidth]{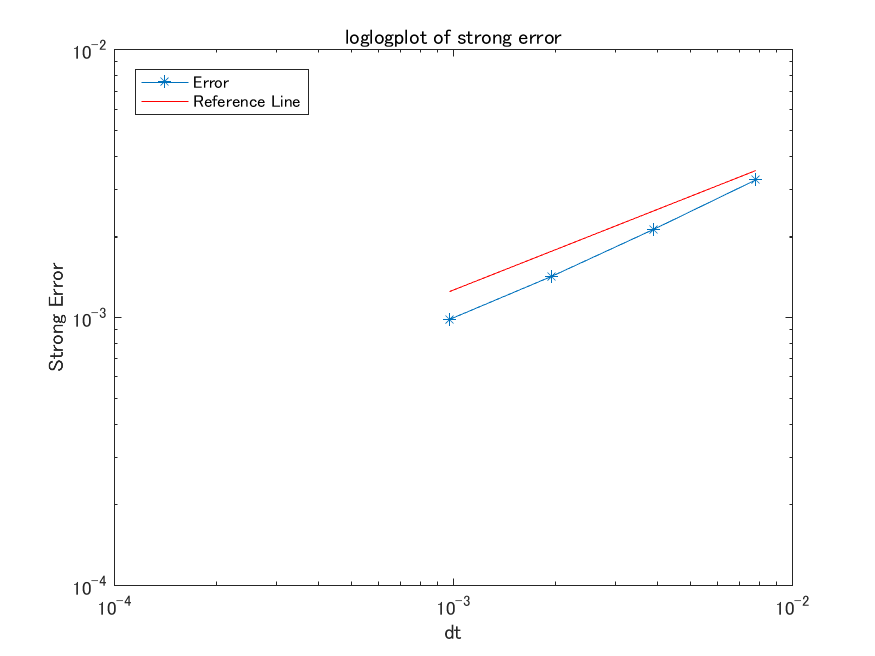} \\
    \includegraphics[width=.47\textwidth]{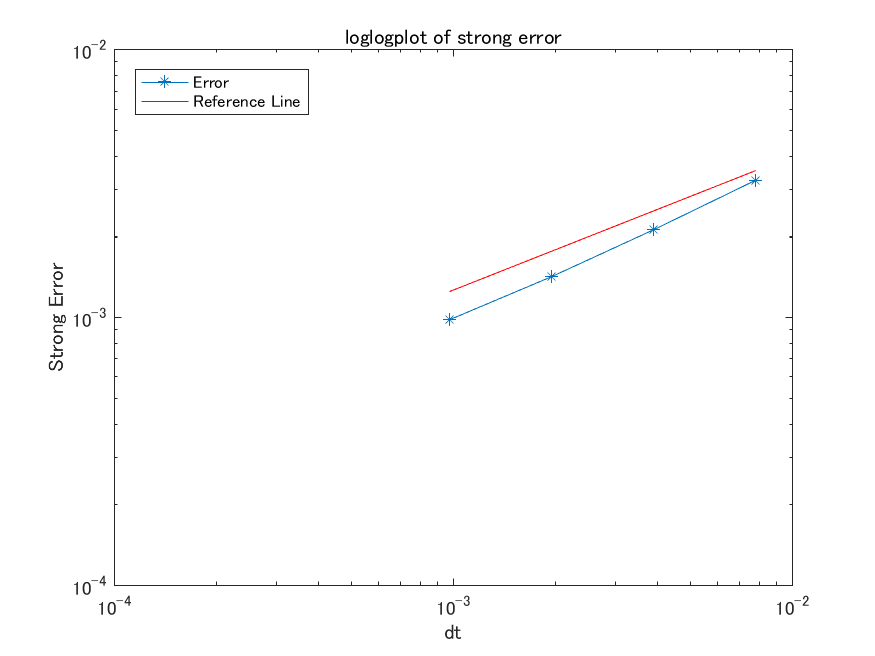} &
    \includegraphics[width=.47\textwidth]{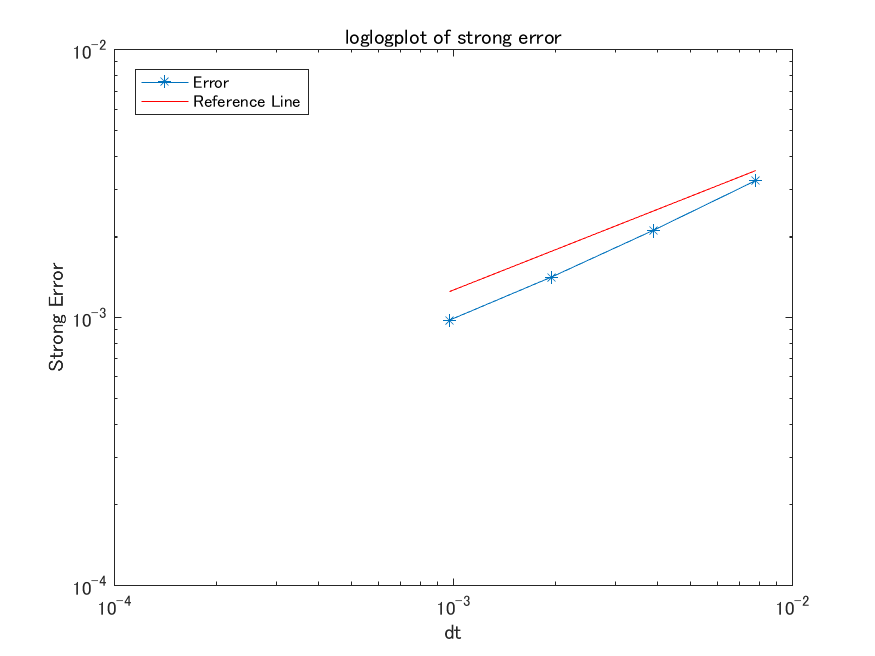} \\
    \includegraphics[width=.47\textwidth]{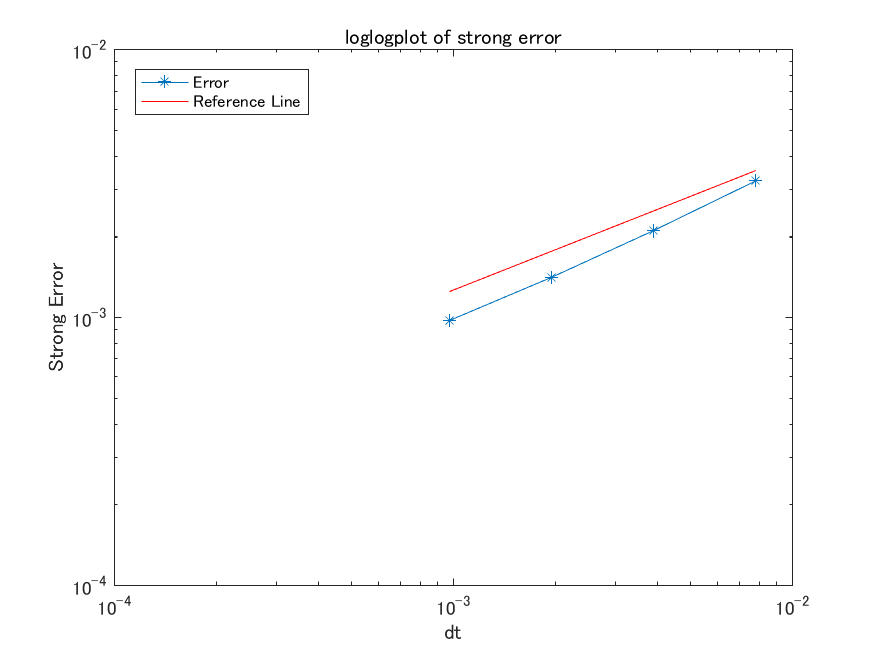} &
    \includegraphics[width=.47\textwidth]{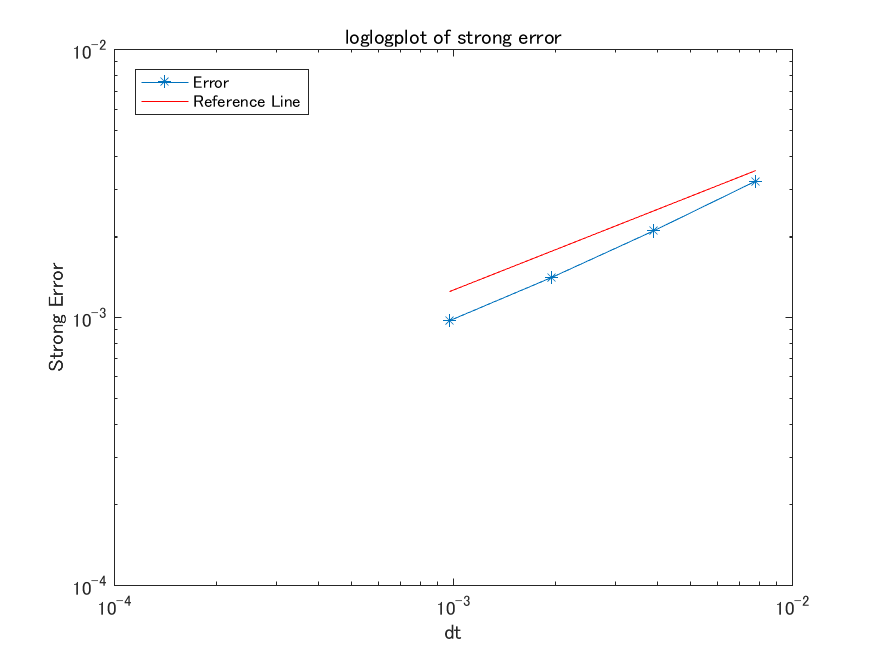} 
  \end{tabular}
\caption{log-log plots illustrating the convergence of the implicit scheme. The red line is a reference line with slop $1/2$. The parameters are $\sigma_1= 0.5$,  $\sigma_2= 0.05$, $\alpha = 1.10 + 0.05\times j$, for $j = 1,2,\dots, 6$, $a = 2$, $k = 3$, $x_0 = 0.03$. The graphs are to be read from left to right and top to bottom in increasing order of $j$. That is, the top left graph has $j = 1$ and the bottom right graph has $j = 6$.}
\end{figure}

\begin{Rem}
In the above graphs, we investigate the effect of increases in $\alpha$ in the case where $\sigma_2$ is small. We observe that when $\sigma_2$ is small then the increase in $\alpha$ (when $\alpha$ is relative small) have little effect to no effect on the rate of convergence or the magnitude of the log-error.
\end{Rem}

\vfill\break 

\begin{figure}[htb]
\centering
\vskip20pt
 \textbf{The effect of increasing $\alpha$ when $\sigma_1$ and $\sigma_2$ are large and are of the same magnitude}\par\medskip
  \begin{tabular}{@{}ccc@{}}
    \includegraphics[width=.47\textwidth]{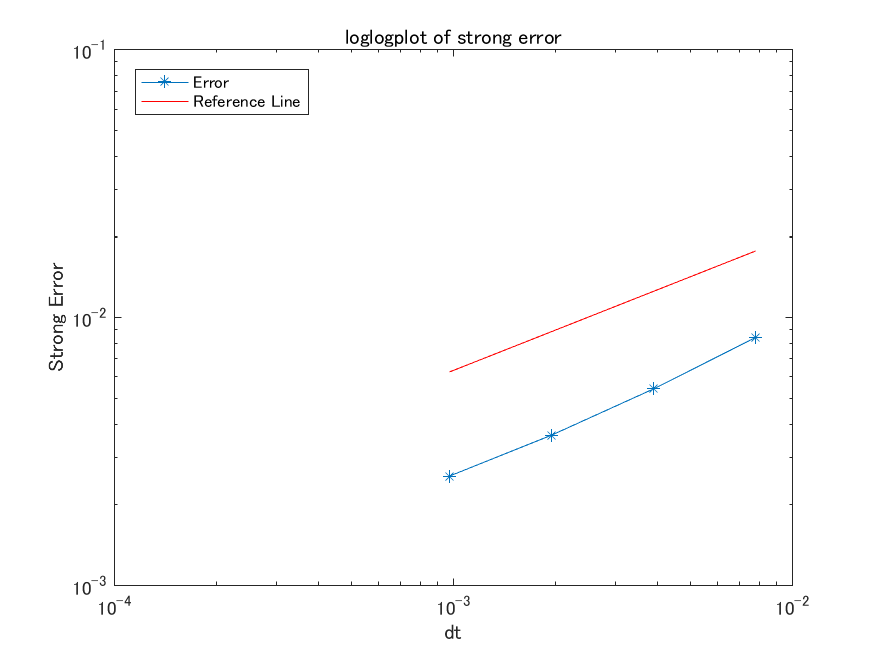} &
    \includegraphics[width=.47\textwidth]{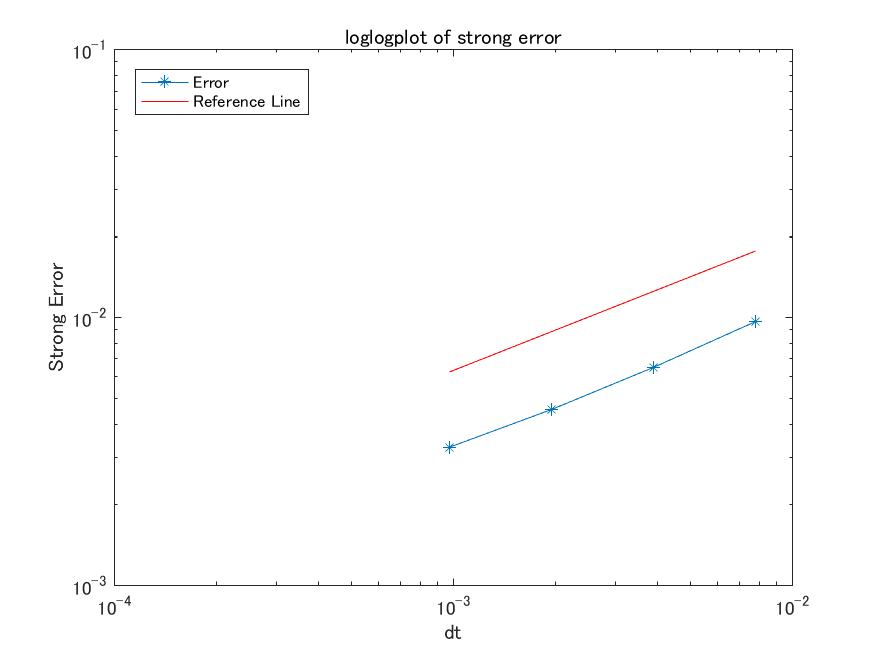} \\
    \includegraphics[width=.47\textwidth]{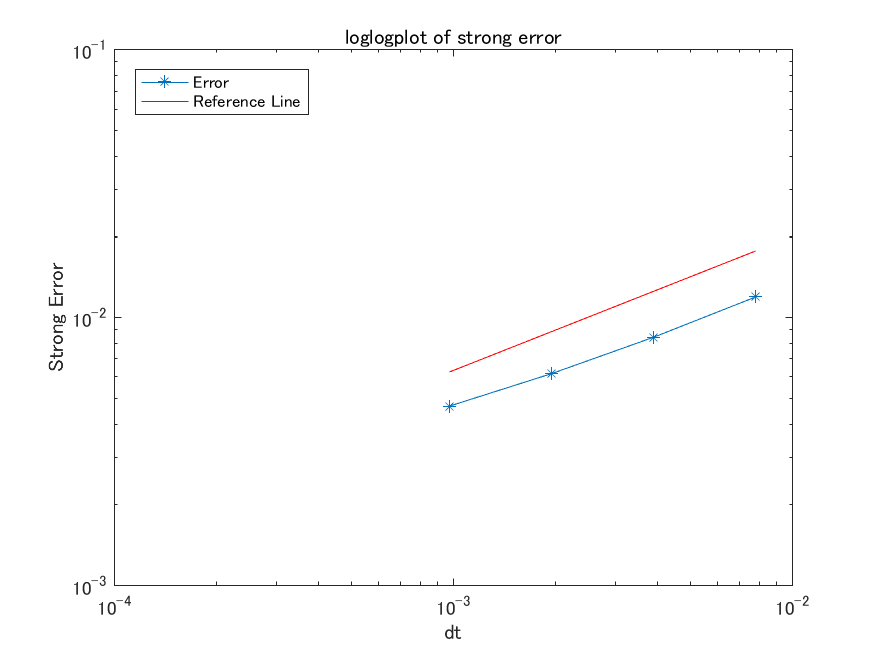} &
    \includegraphics[width=.47\textwidth]{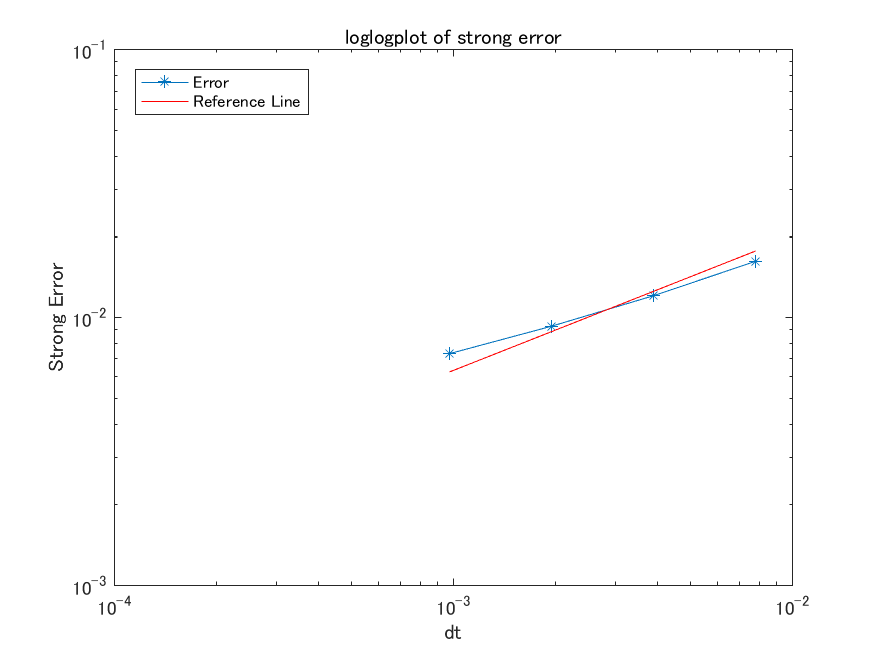} \\
    \includegraphics[width=.47\textwidth]{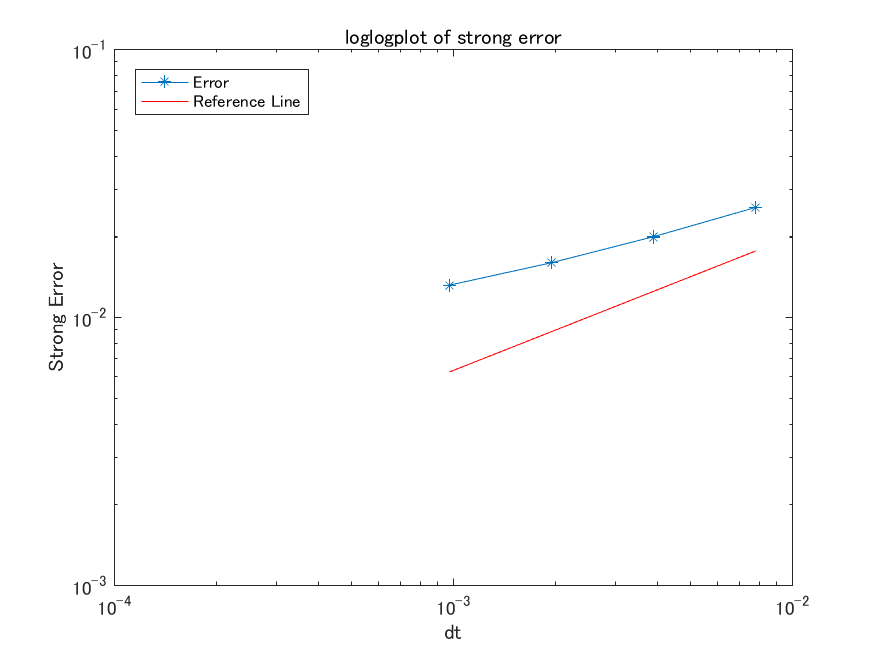} &
    \includegraphics[width=.47\textwidth]{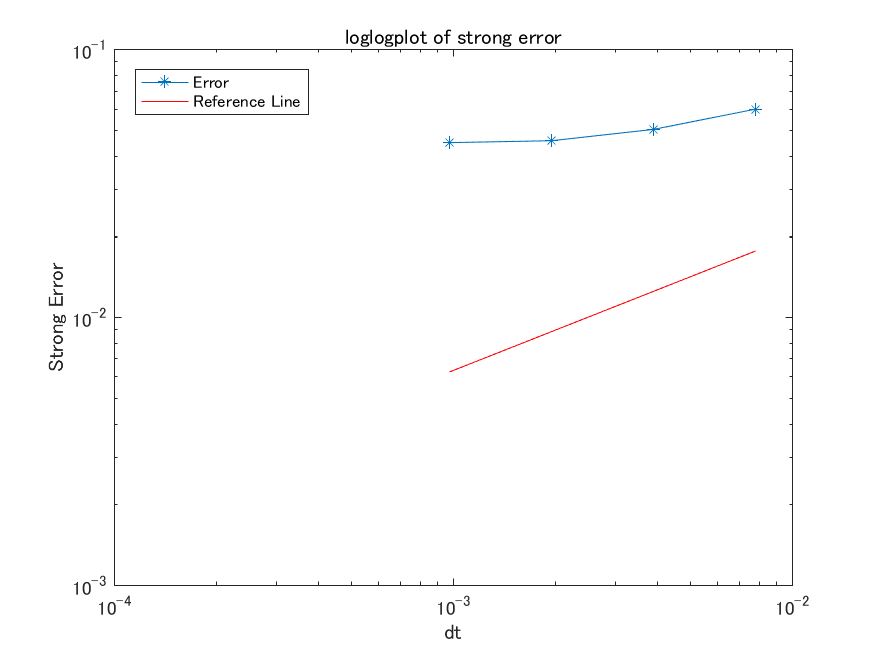} 
  \end{tabular}
\caption{log-log plots illustrating the convergence rate of the implicit scheme. The red line is a reference line with slop $1/2$. The parameters are $\sigma_1= 0.5$,  $\sigma_2= 0.5$, $\alpha = 1.3 + 0.1\times j$, for $j = 1,2,\dots, 6$, $a = 2$, $k = 3$, $x_0 = 0.03$. The graphs are to be read from left to right and top to bottom in increasing order of $j$. That is, the top left graph has $j = 1$ and the bottom right graph has $j = 6$.}
\label{f3.4}
\end{figure}

\begin{Rem}
In the above graphs, we investigate the effect of increases in $\alpha$ when $\sigma_1$ and $\sigma_2$ are of the same magnitude and are relatively large. We observe when $\alpha$ is close to two the rate of convergence decreases and the magnitude of the log-error also increases (note that the range of the $y$-axis is from $10^{-3}$ to $10^{-1}$).
\end{Rem}

\vfill\break

\begin{figure}[htb]
\centering
\vskip20pt
 \textbf{The effect of increasing $\alpha$ when $\sigma_2$ is large and $\sigma_1 = 0$}\par\medskip
  \begin{tabular}{@{}ccc@{}}
    \includegraphics[width=.47\textwidth]{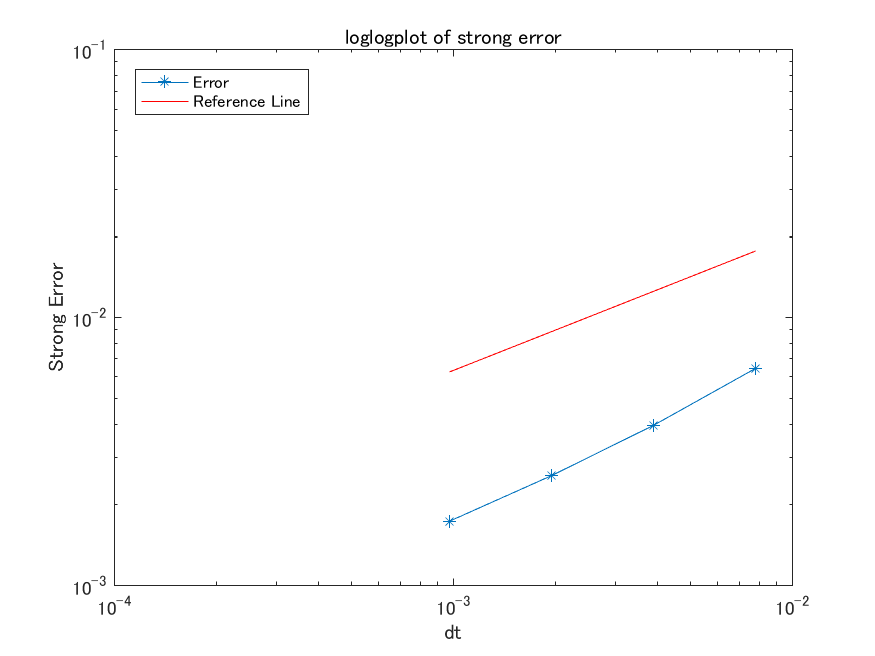} &
    \includegraphics[width=.47\textwidth]{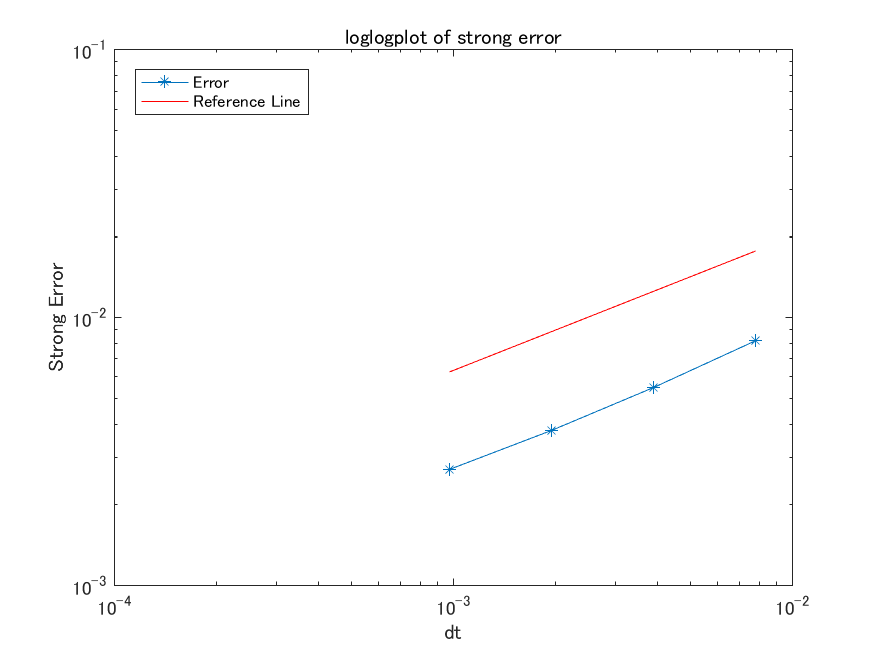} \\
    \includegraphics[width=.47\textwidth]{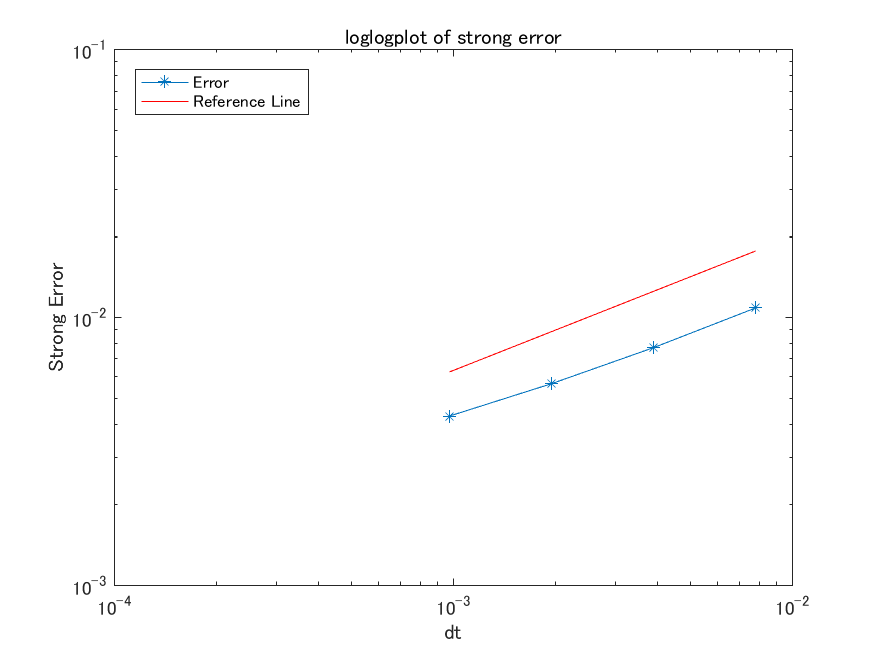} &
    \includegraphics[width=.47\textwidth]{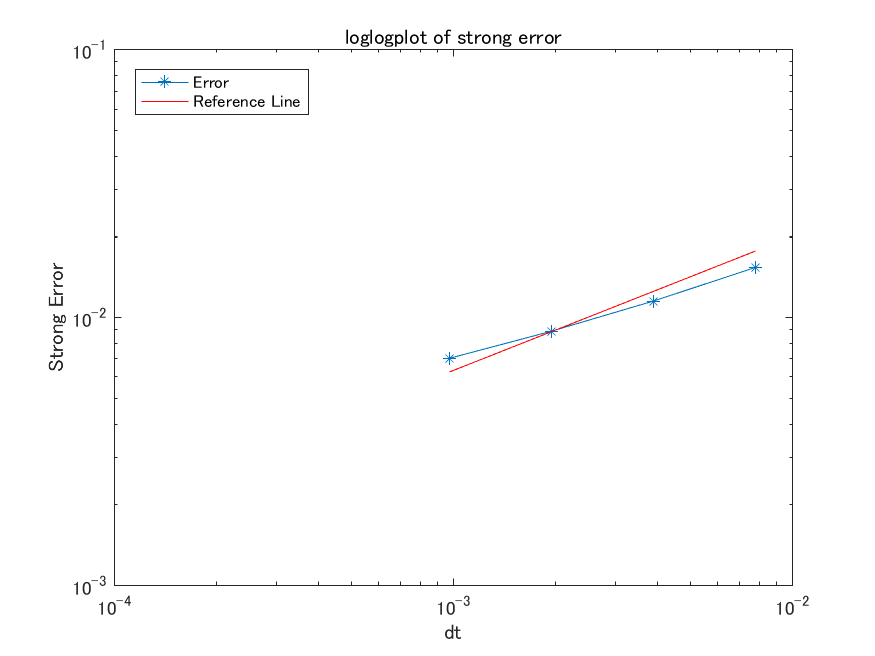} \\
    \includegraphics[width=.47\textwidth]{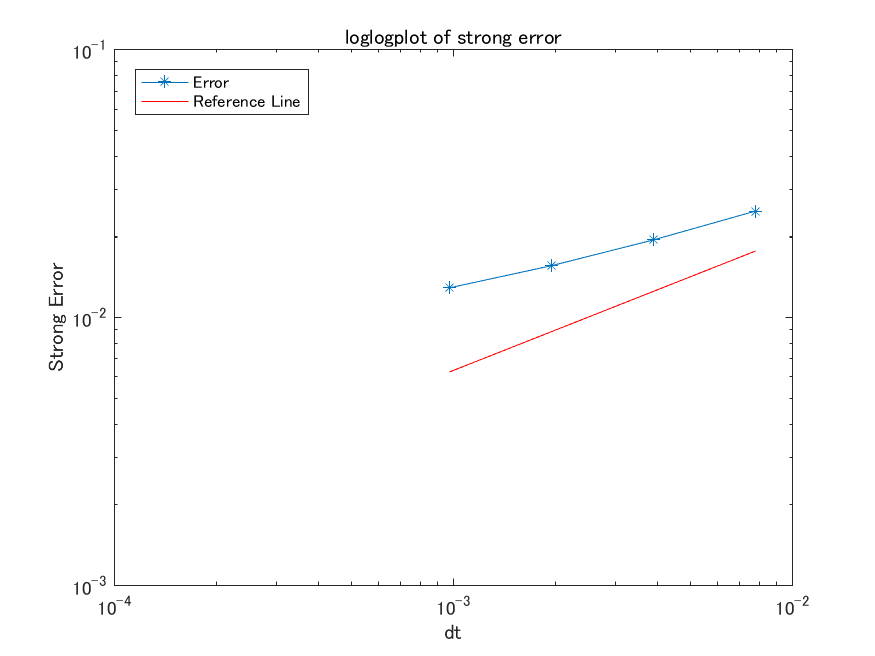} &
    \includegraphics[width=.47\textwidth]{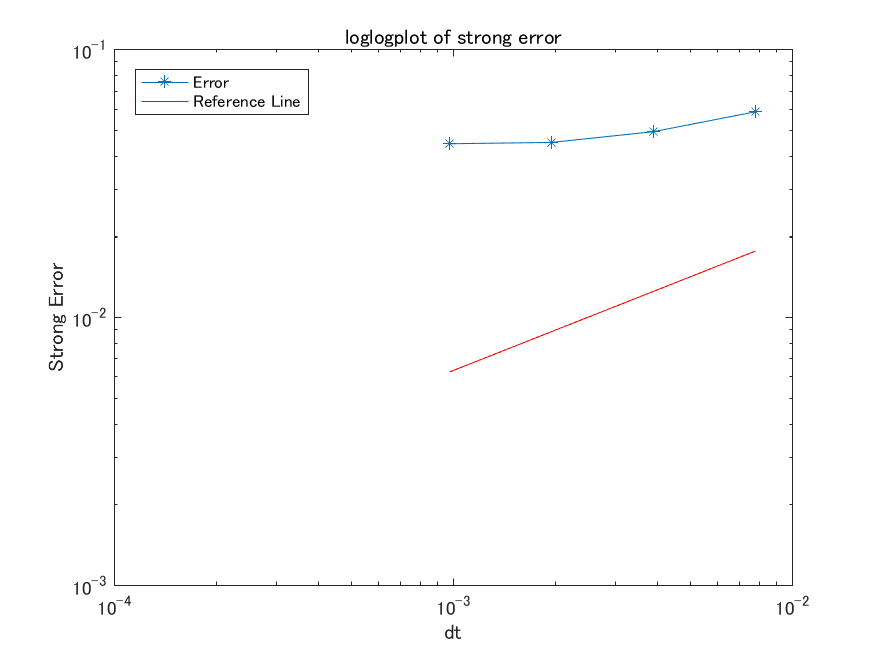} 
  \end{tabular}
\caption{log-log plots illustrating the convergence of the implicit scheme to the ``true" solution. The red line is a reference line with slop $1/2$. The parameters are $\sigma_1= 0$,  $\sigma_2= 0.5$, $\alpha = 1.3 + 0.1\times j$, for $j = 1,2,\dots, 6$, $a = 2$, $k = 3$, $x_0 = 0.03$. The graphs are to be read from left to right and top to bottom in increasing order of $j$. That is, the top left graph has $j = 1$ and the bottom right graph has $j = 6$.}
\end{figure}

\begin{Rem}
In the above graphs, we investigate the effect of increases in $\alpha$ when $\sigma_1=0$ and $\sigma_2$ is relatively large. The numerical results here are similar to those presented in Figure \ref{f3.4} where we observe that increases in $\alpha$ reduces the rate of convergence and the magnitude of the log-error also increases (note that the range of the $y$-axis is from $10^{-3}$ to $10^{-1}$). This is consistent with Theorem \ref{main_1}, where in the case $\sigma_1 = 0$, the strong rate of convergence decreases as $\alpha\uparrow 2$.
\end{Rem}

\section{Appendix}

\subsection{Moment estimate of $X$}\label{a6.1}

In this subsection, we show that for $h(x) = |x|^\frac{1}{\alpha}$ the solution of \eqref{SDE_0} has $\beta$-th moment for $\beta \in [1,\alpha)$.

\begin{Lem}\label{alpha_norm_0}
	For $\beta \in [1,\alpha)$, the $\beta$-th moment of $X$ is finite or more explicitly, there exists a constant $C_0>0$ such that
		\begin{align}
	\sup_{t\leq T} \e\big[|X_{t}|^{\beta}\big]
	\leq C_0(\alpha-\beta)^{-1} e^{C_0(\alpha-\beta)^{-1}T}.\label{l4.1const}
	\end{align}
\end{Lem}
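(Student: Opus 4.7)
The plan is to apply It\^o's formula to $x \mapsto x^\beta$ along the SDE \eqref{SDE_0} (with $h(x)=|x|^{1/\alpha}$), kill the resulting local martingales by the standard localization, and control the jump compensator via a sharp change-of-variables estimate. To avoid the non-smoothness of $f(x)=x^\beta$ at the origin (since $\beta<2$ gives $f''$ singular at $0$), I would work with the regularization $f_\varepsilon(x):=(x+\varepsilon)^\beta$ and the stopping times $\tau_K:=\inf\{t\geq 0:X_t\geq K\}$, and remove both truncations at the end by Fatou's lemma and dominated convergence.

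After taking expectation of the stopped, regularized It\^o expansion, the continuous part contributes drift terms proportional to $\mathbb{E}[X_s^{\beta-1}]$ and $\mathbb{E}[X_s^\beta]$, each bounded by $C(1+\mathbb{E}[X_s^\beta])$ via $X^{\beta-1}\leq 1+X^\beta$. The decisive term is the jump compensator
\begin{align*}
\int_0^\infty \Big[(X_s+\sigma_2 X_s^{1/\alpha}z)^\beta - X_s^\beta - \beta\sigma_2 X_s^{\beta-1+1/\alpha} z\Big]\nu(dz),
\end{align*}
where $\nu(dz)=c_\alpha z^{-1-\alpha}dz$. Substituting first $w=\sigma_2 X_s^{1/\alpha}z$ (giving $\nu(dz)=c_\alpha\sigma_2^\alpha X_s\,w^{-1-\alpha}dw$) and then $u=w/X_s$, the factor $X_s$ decouples from the integration variable and the compensator collapses to
\begin{align*}
c_\alpha\sigma_2^\alpha\, X_s^{\beta+1-\alpha}\, I_{\beta,\alpha},\qquad I_{\beta,\alpha}:=\int_0^\infty \big[(1+u)^\beta-1-\beta u\big]u^{-1-\alpha}du.
\end{align*}

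The crucial step is to estimate $I_{\beta,\alpha}$ with sharp dependence on $\alpha-\beta$. Splitting at $u=1$, the Taylor bound $(1+u)^\beta-1-\beta u\leq Cu^2$ on $(0,1)$ gives a contribution bounded uniformly in $\beta$; on $[1,\infty)$ I would use $(1+u)^\beta-1-\beta u\leq 2^\beta u^\beta$, so
\begin{align*}
\int_1^\infty 2^\beta u^{\beta-1-\alpha}\,du=\frac{2^\beta}{\alpha-\beta},
\end{align*}
and hence $I_{\beta,\alpha}\leq C_0/(\alpha-\beta)$. Combined with $X^{\beta+1-\alpha}\leq 1+X^\beta$ (valid because $\beta+1-\alpha\leq\beta$), this yields the Gronwall-ready bound
\begin{align*}
1+\mathbb{E}[X_{t\wedge\tau_K}^\beta]\leq (1+x_0^\beta)+\frac{C_0'}{\alpha-\beta}\int_0^t\big(1+\mathbb{E}[X_{s\wedge\tau_K}^\beta]\big)\,ds,
\end{align*}
so Gronwall's inequality gives the exponential estimate, uniformly in $K$ and $\varepsilon$. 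Sending $\varepsilon\downarrow 0$ and $K\uparrow\infty$ via Fatou and noting that $1\leq(\alpha-\beta)^{-1}$ (since $\alpha-\beta<1$) lets one absorb $1+x_0^\beta$ into a prefactor of the stated form.

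The main obstacle is extracting the correct $(\alpha-\beta)^{-1}$ blow-up rather than a crude finite bound on $I_{\beta,\alpha}$: this sharp dependence is essential for the applications where one must send $\beta\uparrow\alpha$, as in the proof of Theorem \ref{main_2}. The two-step substitution making the jump integral entirely $X_s$-free is the key trick that renders the singularity in $\alpha-\beta$ transparent; everything else is bookkeeping around the regularization and localization needed because of the non-smoothness of $x^\beta$ at zero and the a priori lack of integrability of $X_t^\beta$.
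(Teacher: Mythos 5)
Your argument is correct and is essentially the paper's own proof: It\^o's formula for $x^\beta$, localization, a small/large-jump split of the compensator with a second-order Taylor bound for small jumps and a power bound whose integral $\int_1^\infty z^{\beta-1-\alpha}\,dz=(\alpha-\beta)^{-1}$ produces exactly the stated blow-up, followed by Gronwall and Fatou; your scaling substitution merely repackages these same estimates. One small caveat: the exact collapse of the compensator to $c_\alpha\sigma_2^\alpha X_s^{\beta+1-\alpha}I_{\beta,\alpha}$ holds for $x^\beta$ itself but not for your regularization $(x+\varepsilon)^\beta$, so with the regularized function you should instead bound the compensator by the same two Taylor estimates (using $(X_s+\varepsilon)^{\beta-2}\leq X_s^{\beta-2}$ for the small-jump part), which is precisely how the paper argues and loses nothing in the $(\alpha-\beta)^{-1}$ dependence.
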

\begin{proof}
	In the following, let $(\tau_m)_{m\in \mathbb{N}^+}$ be a localizing sequence of stopping times so that when stopped at $\tau_m$, all local martingales are martingales. By apply the It\^o formula to $X^{\beta}$, we obtain
	\begin{align}\label{alpha_norm_1}
	(X_{t \wedge \tau_m})^{\beta}
	=x_0^{\beta}
	+M_{t \wedge \tau_m}
	+I_{t \wedge \tau_m}
	+J_{t \wedge \tau_m}
	+K_{t \wedge \tau_m},
	\end{align}
	where we have set
	\begin{align*}
	{M}_t
	:=& \sigma_1 \beta \int_{0}^{t} 
	(X_{s})^{\beta-1/2} dW_{s}
	+\int_{0}^{t} \int_{0}^{\infty}
	\left\{
	(X_{s-}+\sigma_2 h(X_{s-})z)^{\beta}-(X_{s-})^{\beta}
	\right\}
	{\widetilde N}(ds,dz),\\
	{I}_t
	:=&\beta\int_{0}^{t} (X_{s})^{\beta-1} \{a-kX_{s}\}ds,
	\quad
	{J}_t
	:= \sigma_1^2 \frac{\beta(\beta-1)}{2} \int_{0}^{t} (X_{s})^{\beta-1}ds,\\
	{K}_t
	:=&
	\int_{0}^{t} \int_{0}^{\infty}
	\Big\{
	(X_{s-}+\sigma_2 h(X_{s-})z)^{\beta}-(X_{s-})^{\beta}-\sigma_2 h(X_{s-})z (X_{s-})^{\beta-1}
	\Big\}
	\nu(dz)ds.
	\end{align*}
	The martingale term $M_{t\wedge \tau_m}$ can be removed after taking the expectation.	
	Next we consider $K_{t \wedge \tau_m}$.
	For $z \in (0,1)$, by the second order Taylor's expansion for the map $x \mapsto x^{\beta}$, we have
	\begin{align*}
	(y+xz)^{\beta}-y^{\beta}-\beta xz y^{\beta-1}
	=\alpha (\beta-1) |xz|^2 \int_{0}^{1} \theta (y+\theta xz)^{\beta-2} d\theta
	\leq \frac{\beta (\beta-1) |xz|^2 }{y^{2-\beta}}.
	\end{align*}
	For $z \in [1,\infty)$, 
	by the first order Taylor's expansion for the map $x \mapsto x^{\beta}$ and the H\"older continuity of the map $x \mapsto x^{\beta-1}$, we have
	\begin{align*}
	\left| (y+xz)^{\beta}-y^{\beta}-\beta xz y^{\beta-1}\right|
	\leq \alpha |xz| \int_{0}^{1} \left| (y+\theta xz)^{\beta-1}-y^{\beta-1} \right| d\theta
	\leq \alpha |xz|^{\beta}.
	\end{align*}
	Hence, the expectation of $|K_{t \wedge \tau_m}|$ is bounded by
	\begin{align*}
	\beta(\beta-1)\e\big[
	\int_{0}^{t \wedge \tau_m} \int_{0}^{1}
	|X_s|^{\frac{2}{\beta}+\beta-2}
	z^2
	\nu(dz)ds
	\big]
	+\beta \e\big[
	\int_{0}^{t \wedge \tau_m} \int_{1}^{\infty}
	|X_s|
	z^{\beta}
	\nu(dz)
	ds
	\big].
	\end{align*}
	Since $\beta \in [1,\alpha)$, then $0< \frac{2}{\beta} + \beta -2 \leq \beta$ and there exists a constant $C>0$ such that $|x|^{\beta-1}\vee |x| \vee |x|^{\frac{2}{\beta}+\beta-2} \leq C(1+|x|^{\beta})$. Then by taking expectation on \eqref{alpha_norm_1}, and using the fact that $\nu(dz) \,\,\propto\,\, z^{-(1+\alpha)} dz$, we see that there exists $C_0>0$ such that
	\begin{align*}
	\e\big[|X_{t \wedge \tau_m}|^{\beta}\big]
	\leq C_0(\alpha-\beta)^{-1}
	+C_0 (\alpha-\beta)^{-1}\int_{0}^{t}
	\e\big[|X_{s \wedge \tau_m}|^{\beta}\big]
	ds.
	\end{align*}
	By Gronwall's inequality, we obtain
	\begin{align*}
	\e\big[|X_{t \wedge \tau_m}|^{\beta}\big]
	\leq C_0(\alpha-\beta)^{-1} e^{C_0(\alpha-\beta)^{-1}T}.
	\end{align*}
	Finally, we conclude by using Fatou's lemma.
\end{proof}

\subsection{Yamada-Watanabe Approximation Technique}\label{yamada}

We introduce below the Yamada and Watanabe approximation technique. For each $\delta \in (1,\infty)$ and $\varepsilon \in (0,1)$, we select a continuous function $\psi _{\delta, \varepsilon}: \real \to \real^+$ with support of $\psi _{\delta, \varepsilon}$ belongs to $[\varepsilon/\delta, \varepsilon]$ and is such that
\begin{align*} 
\int_{\varepsilon/\delta}^{\varepsilon} \psi _{\delta, \varepsilon}(z) dz
= 1 \quad \text{ and } \quad  0 \leq \psi _{\delta, \varepsilon}(z) \leq \frac{2}{z \log \delta}, \:\:\:z > 0.
\end{align*}
We define a function $\phi_{\delta, \varepsilon} \in C^2(\real;\real)$ by setting
\begin{align}
\phi_{\delta, \varepsilon}(x)&:=\int_0^{|x|}\int_0^y \psi _{\delta, \varepsilon}(z)dzdy.\label{yamada1}
\end{align}
It is straight forward to verify that $\phi_{\delta, \varepsilon}$ has the following useful properties: 
\begin{align*} 
&|x| \leq \varepsilon + \phi_{\delta, \varepsilon}(x), \text{ for any $x \in \real $}, \\ 
&0 \leq |\phi'_{\delta, \varepsilon}(x)| \leq 1, \text{ for any $x \in \real$} , \\
&\phi'_{\delta, \varepsilon}(x) \geq 0, \text{ for } x\geq 0 \text{ and } \phi'_{\delta, \varepsilon}(x) < 0, \text{ for } x< 0, \\
&\phi''_{\delta, \varepsilon}(\pm|x|)=\psi_{\delta, \varepsilon}(|x|)
\leq \frac{2}{|x|\log \delta}{\bf 1}_{[\varepsilon/\delta, \varepsilon]}(|x|)
\leq \frac{2\delta }{\varepsilon \log \delta},
\text{ for any $x \in \real \setminus\{0\}$}. 
\end{align*}

\begin{Lem}[Lemma 1.3 in \cite{LT}]\label{key_lem_0}
	Suppose that the L\'evy measure $\nu$ satisfies $\int_0^{\infty} \{z \wedge z^2\} \nu(dz) < \infty$.
	Let $\varepsilon \in (0,1)$ and $\delta \in (1,\infty)$.
	Then for any $x \in \real$, $y \in \real \setminus\{0\}$ with $xy \geq 0$ and $u>0$, it holds that
	\begin{align*}
	&\int_{0}^{\infty}
	\{\phi_{\delta,\varepsilon}(y+xz)-\phi_{\delta,\varepsilon}(y)-xz\phi_{\delta,\varepsilon}'(y)\} \nu(dz)\notag\\
	&\leq 2 \cdot \1_{(0,\varepsilon]}(|y|)
	\left\{
	\frac{|x|^2}{\log \delta} \left(\frac{1}{|y|} \wedge \frac{\delta}{\varepsilon}\right) \int_{0}^{u} z^2 \nu(dz)
	+ |x| \int_{u}^{\infty} z \nu(dz)
	\right\}.
	\end{align*}
\end{Lem}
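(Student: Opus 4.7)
The plan is to bound the integrand pointwise using a Taylor expansion of the Yamada--Watanabe regularization $\phi_{\delta,\varepsilon}$, splitting the integration domain at $z=u$ into a small-jump part $(0,u)$ and a large-jump part $[u,\infty)$, and treating the regimes $|y|>\varepsilon$ and $|y|\in(0,\varepsilon]$ separately.

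First, for $|y|>\varepsilon$, since $\psi_{\delta,\varepsilon}$ is supported in $[\varepsilon/\delta,\varepsilon]$ and integrates to one, the derivative $\phi'_{\delta,\varepsilon}(w)=\mathrm{sign}(w)$ for all $|w|\geq \varepsilon$ and $\phi''_{\delta,\varepsilon}$ vanishes there, so $\phi_{\delta,\varepsilon}$ is affine on each of $(-\infty,-\varepsilon]$ and $[\varepsilon,\infty)$. The assumption $xy\geq 0$ guarantees that $y+xz$ and $y$ have the same sign with $|y+xz|\geq |y|>\varepsilon$, hence
$$\phi_{\delta,\varepsilon}(y+xz)-\phi_{\delta,\varepsilon}(y)-xz\,\phi'_{\delta,\varepsilon}(y)=|y+xz|-|y|-xz\,\mathrm{sign}(y)=0,$$
which is consistent with the factor $\mathbf{1}_{(0,\varepsilon]}(|y|)$ on the right-hand side.

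Next, for $|y|\leq \varepsilon$ and $z\in(0,u)$, I would apply a second order Taylor expansion
$$\phi_{\delta,\varepsilon}(y+xz)-\phi_{\delta,\varepsilon}(y)-xz\,\phi'_{\delta,\varepsilon}(y)=\tfrac{1}{2}(xz)^{2}\phi''_{\delta,\varepsilon}(\xi)$$
for some $\xi$ between $y$ and $y+xz$. The key observation, where the hypothesis $xy\geq 0$ is really used, is that $\xi$ must lie on the same side of the origin as $y$ with $|\xi|\geq |y|$. Combining this with $\phi''_{\delta,\varepsilon}(\xi)\leq \frac{2}{|\xi|\log\delta}$ and the global bound $\phi''_{\delta,\varepsilon}(\xi)\leq \frac{2\delta}{\varepsilon\log\delta}$ yields $\phi''_{\delta,\varepsilon}(\xi)\leq \frac{2}{\log\delta}\bigl(\frac{1}{|y|}\wedge \frac{\delta}{\varepsilon}\bigr)$, which after integration against $z^{2}\nu(dz)$ on $(0,u)$ produces the first term in the stated bound. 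For $z\in[u,\infty)$ the facts that $\phi_{\delta,\varepsilon}$ is $1$-Lipschitz and $|\phi'_{\delta,\varepsilon}(y)|\leq 1$ yield the trivial estimate $|\phi_{\delta,\varepsilon}(y+xz)-\phi_{\delta,\varepsilon}(y)-xz\,\phi'_{\delta,\varepsilon}(y)|\leq 2|xz|$, producing the second term.

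The main obstacle is the inequality $|\xi|\geq |y|$: without the sign condition $xy\geq 0$ one would be forced to use only the cruder global bound $\frac{2\delta}{\varepsilon\log\delta}$ for $\phi''_{\delta,\varepsilon}$ on $(0,u)$ and would lose the $\frac{1}{|y|}$ factor, which is essential when the lemma is combined with Gronwall's inequality in the proofs of Theorems \ref{main_1} and \ref{main_2}. Apart from this geometric observation, the remaining estimates are routine applications of Taylor's theorem and the elementary properties of $\phi_{\delta,\varepsilon}$ listed just above the statement.
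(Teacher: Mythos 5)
Your proof is correct and follows essentially the same route as the cited source: the paper itself does not prove this lemma but quotes it as Lemma 1.3 of \cite{LT}, where the argument is exactly the one you give — for $|y|>\varepsilon$ the condition $xy\geq 0$ keeps the whole segment from $y$ to $y+xz$ in the region where $\phi_{\delta,\varepsilon}$ is affine so the integrand vanishes, while for $0<|y|\leq\varepsilon$ one combines the second-order Taylor remainder on $(0,u)$ with $\phi''_{\delta,\varepsilon}(\xi)\leq \frac{2}{\log\delta}\left(\frac{1}{|\xi|}\wedge\frac{\delta}{\varepsilon}\right)$ and $|\xi|\geq |y|$, and the bounds $|\phi'_{\delta,\varepsilon}|\leq 1$ on $[u,\infty)$. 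Your identification of $|\xi|\geq|y|$ (hence the $\frac{1}{|y|}$ factor) as the precise place where the sign condition enters is exactly the point of the lemma, and your constants are even slightly sharper than the stated ones.
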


\begin{Lem}[Lemma 1.4 in \cite{LT}]\label{key_lem12}
	Suppose that the L\'evy measure $\nu$ satisfies $\int_0^{\infty} \{z \wedge z^2\} \nu(dz) < \infty$.
	Let $\varepsilon \in (0,1)$ and $\delta \in (1,\infty)$.
	Then for any $x,x' \in \real$, $y \in \real$ and $u \in (0,\infty]$, it holds that
	\begin{align}\label{key_lem_122}
	&\int_{0}^{\infty}\left|
		\phi_{\delta,\varepsilon}(y+xz)-\phi_{\delta,\varepsilon}(y+x'z)-(x-x')z\phi_{\delta,\varepsilon}'(y)
	\right| \nu(dz) \notag\\
	&\leq 2	\left\{ \frac{\delta ( |x-x'|^2+|x'||x-x'|)}{\varepsilon\log \delta}\int_{0}^{u} z^2 \nu(dz)
	+ |x-x'| \int_{u}^{\infty} z \nu(dz)
	\right\}.
	\end{align}
	In particular, if $x'=0$, then
	\begin{align}\label{key_lem_123}
	& \int_{0}^{\infty}
	\{\phi_{\delta,\varepsilon}(y+xz)-\phi_{\delta,\varepsilon}(y)-xz\phi_{\delta,\varepsilon}'(y)\} \nu(dz)\\
	& \leq 2	\left\{ \frac{\delta |x|^2}{\varepsilon\log \delta}\int_{0}^{u} z^2 \nu(dz)
	+ |x| \int_{u}^{\infty} z \nu(dz)
	\right\}.\notag
	\end{align}
\end{Lem}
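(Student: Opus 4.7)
\medskip

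\noindent\textbf{Proof Proposal.} The plan is to split the $z$-integral at the threshold $u$ and estimate the two pieces separately, using different Taylor expansions adapted to the size of $z$.

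For the contribution on $(0,u]$ I would write, for each fixed $z$,
\begin{align*}
\phi_{\delta,\varepsilon}(y+xz)-\phi_{\delta,\varepsilon}(y+x'z)
&=(x-x')z\int_0^1\phi'_{\delta,\varepsilon}\bigl(y+x'z+\theta(x-x')z\bigr)\,d\theta,
\end{align*}
so that subtracting $(x-x')z\,\phi'_{\delta,\varepsilon}(y)$ expresses the integrand as
\begin{align*}
(x-x')z\int_0^1\bigl[\phi'_{\delta,\varepsilon}\bigl(y+x'z+\theta(x-x')z\bigr)-\phi'_{\delta,\varepsilon}(y)\bigr]\,d\theta.
\end{align*}
Writing the bracket as an integral of $\phi''_{\delta,\varepsilon}$ and using the uniform bound $\phi''_{\delta,\varepsilon}\le\frac{2\delta}{\varepsilon\log\delta}$, the absolute value of the bracketed expression is bounded by $\frac{2\delta}{\varepsilon\log\delta}(|x'|z+\theta|x-x'|z)$. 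Performing the $\theta$-integration yields $|x'|z+\tfrac{1}{2}|x-x'|z$, and multiplying by $|x-x'|z$ gives a pointwise bound of $\frac{2\delta z^2}{\varepsilon\log\delta}\bigl(|x'||x-x'|+|x-x'|^2\bigr)$. Integrating against $\nu$ over $(0,u]$ delivers exactly the first piece on the right-hand side of \eqref{key_lem_122}.

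For the contribution on $(u,\infty)$ I would use crude first-order bounds. Since $|\phi'_{\delta,\varepsilon}|\le 1$, the mean value theorem gives $|\phi_{\delta,\varepsilon}(y+xz)-\phi_{\delta,\varepsilon}(y+x'z)|\le|x-x'|z$, and trivially $|(x-x')z\,\phi'_{\delta,\varepsilon}(y)|\le|x-x'|z$. The triangle inequality therefore bounds the integrand by $2|x-x'|z$, and integrating against $\nu$ over $(u,\infty)$ produces the second piece in \eqref{key_lem_122}. Assembling the two estimates yields the claimed inequality, and \eqref{key_lem_123} is the special case $x'=0$.

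There is no real obstacle here: the argument is a routine split-at-$u$ Taylor estimate that exploits the two basic properties of $\phi_{\delta,\varepsilon}$ (namely $|\phi'_{\delta,\varepsilon}|\le 1$ and $\phi''_{\delta,\varepsilon}\le\frac{2\delta}{\varepsilon\log\delta}$). The only care needed is in the bookkeeping of the constants: one must track the factor $\tfrac{1}{2}$ from the $\theta$-integration and then absorb it into the coefficient of $|x-x'|^2$, which is permissible because the final bound states $|x-x'|^2+|x'||x-x'|$ (not $\tfrac{1}{2}|x-x'|^2+|x'||x-x'|$). Note also that, unlike Lemma \ref{key_lem_0}, no sign condition on $y$ is needed here, since we never use the $\mathbf{1}_{(0,\varepsilon]}(|y|)$ localisation; the bounds follow directly from the global estimates on $\phi'_{\delta,\varepsilon}$ and $\phi''_{\delta,\varepsilon}$.
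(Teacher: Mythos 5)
Your proof is correct. The paper itself does not prove this lemma --- it is quoted directly as Lemma 1.4 of \cite{LT} --- so there is no in-paper argument to compare against; your split-at-$u$ estimate, writing the integrand as $(x-x')z\int_0^1[\phi'_{\delta,\varepsilon}(y+x'z+\theta(x-x')z)-\phi'_{\delta,\varepsilon}(y)]\,d\theta$ on $(0,u]$ and using only the global bounds $|\phi'_{\delta,\varepsilon}|\le 1$ and $\phi''_{\delta,\varepsilon}\le \frac{2\delta}{\varepsilon\log\delta}$, is the standard route, and the constant bookkeeping (absorbing the factor $\tfrac{1}{2}$ from the $\theta$-integration into $|x-x'|^2$) is handled correctly, so the bound \eqref{key_lem_122} and its special case \eqref{key_lem_123} follow as you state.
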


\subsection{Estimates of the probability that $D$ is negative}\label{d}

\noindent In the case where $Z$ is an $\alpha$-stable compensated L\'{e}vy process with $\alpha \in (1,2)$, where for $q\geq 0$, the moment generating function is given by $$\mathbb{E}\left[e^{-qZ_t}\right] = \exp\left(\frac{tq^{\alpha}}{\sin(\pi(\alpha-1)/2)}\right),$$
see Jiao et al. \cite{JMS}. The support of $Z_t$ is not bounded below and it is not possible to find conditions on the parameters which guarantee that the process $D$ is non-negative. 

\begin{proof}[Proof of Lemma \ref{dnegative}]
One can estimate the conditional probability using the conditional Laplace transform, and  for any $m>0$,
\begin{align*}
& \mathbb{E}\big[e^{-mD_{t_{i+1}}}\big|\, \F_{t_i}\big] \\
&= \exp\left(-m\left(a - \sigma_1^2/2\right)\Delta t_i\right) 
\mathbb{E}\big[\exp\big(-m\big(x + \sigma_2 (|x|^\frac{1}{\alpha}\wedge H) \Delta Z_{t_i} \big) \big) \big] \Big|_{x= X^{H,n}_{t_i}}\\
	& =  \exp\left(-m\left(a - \sigma_1^2/2\right)\Delta t_i\right) \exp\left(-m X^{H,n}_{t_i} \right)\exp\left(\frac{m^\alpha \Delta t_i \sigma_2^\alpha (|X^{H,n}_{t_i}|\wedge H^\alpha)}{\sin(\pi(\alpha-1)/2)}\right)\\
	&\leq \exp\left(-m\left(a - \sigma_1^2/2\right)\Delta t_i\right) \exp\left(\left(\frac{m^{\alpha-1} \Delta t_i \sigma_2^\alpha}{\sin(\pi(\alpha-1)/2)} - 1 \right) m X^{H,n}_{t_i} \right).
\end{align*}
By setting $m$ as the solution to
$$\frac{m^{\alpha-1} \Delta t_i \sigma_2^\alpha}{\sin(\pi(\alpha-1)/2)} - 1 = 0$$ we eliminate $X^{H,n}_{t_i}$ from the above expression, giving the upper bound
$$
\exp\left(-\left(a-\sigma_1^2/2\right)\left(\sin\left(\pi(\alpha-1)/2\right)\right)^{\frac{1}{\alpha-1}}\sigma_2^{-\frac{\alpha}{\alpha-1}}\left(1/\Delta t_i\right)^{\frac{1}{\alpha-1}-1} \right).
$$
\end{proof}

\noindent {\bf Acknowledgement:} The authors wish to thank the anonymous referees for their careful readings and valuable advices on the writing of this article. The first author also wishes to thank Allan Loi for interesting discussions. The second author was supported by JSPS KAKENHI Grant Number 17H066833.

\end{document}